\documentclass[11pt]{article}

\RequirePackage{my_packages} 
\RequirePackage{my_theorems}
\RequirePackage{my_macros}
\RequirePackage{my_tikz}

\usepackage[normalem]{ulem}

\usepackage[backend = biber,        
            language = english ,
            style = numeric-comp ,   
            sorting = nyt,
            sortcites = true,
            firstinits = true,
            isbn = false,
            url = false,
            doi = false,
            maxnames = 6,
            backref=true
            ]{biblatex}

\DeclareNameAlias{sortname}{last-first}
\renewbibmacro{in:}{%
  \ifentrytype{article}{}{%
  \printtext{\bibstring{in}\intitlepunct}}}
\AtEveryBibitem{
 \clearlist{address}
 \clearfield{date}
 \clearfield{eprint}
 \clearfield{isbn}
 \clearfield{issn}
 \clearlist{location}
 \clearfield{month}
 \clearfield{series}
 \clearlist{language}
 \clearfield{note}
 
}

\DeclareNameAlias{labelname}{given-family}

\title{
Additive subgroups of $\QQ$: logarithmic description and intersection configurations
}

\author{Jordi Delgado, Antoni Massegú and Enric Ventura}
\date{\vspace{-7pt}
    Departament de Matemàtiques\\
    Universitat Politècnica de Catalunya\\[17pt]
    \today
}

\newcommand{\Addresses}{{
  \bigskip
  \footnotesize

  Jordi Delgado\\\nopagebreak
  \textsc{Departament de Matemàtiques\\Universitat Politècnica de Catalunya, Spain}\\\nopagebreak 
  \url{jorge.delgado@upc.edu}

  \medskip

  Enric Ventura\\\nopagebreak 
  \textsc{Departament de Matemàtiques \& ImTech \\ Universitat Politècnica de Catalunya, Spain}\\\nopagebreak 
  \url{enric.ventura@upc.edu }

   \medskip

  Antoni Massegú\\\nopagebreak 
  \textsc{Departament d'Educació Generalitat de Catalunya\\Universitat Politècnica de Catalunya, Spain}\\\nopagebreak 
  \url{antoni.massegu@upc.edu }

}}

\begin{document}

\maketitle

\begin{abstract}
We develop a logarithmic framework to study the lattice of subgroups of the additive group of rational numbers. By encoding positive rationals via their prime exponent sequences, one obtains a bijection between $\QQ^+$ and finitely supported integral sequences indexed by the prime numbers. This correspondence extends to arbitrary subgroups of $(\QQ,+)$ through a logarithmic greatest common divisor, yielding a classification of subgroups in terms of eventually nonpositive sequences in $\ZZ\cup\{-\infty\}$. 

Within this framework, subgroup inclusion, sum, product, and intersection admit simple coordinatewise descriptions, providing a transparent interpretation of the subgroup lattice and recovering several classical results such as the subgroup classification up to isomorphism.

Exploiting this perspective, we obtain a complete characterization of the intersection configurations realizable in subgroups of $(\QQ,+)$. For rank configurations, realizability is characterized by the decreasing condition together with restrictions on the possible ranks and on the minimal zero sets, and, in the finite-support case, by an additional cardinality condition. For binary configurations, this reduces to saying that every decreasing configuration is realizable in the infinite-support case, whereas in the finite-support case the mentioned cardinality condition is required.
\end{abstract}






\vspace{10pt}
\noindent
\textsc{Keywords}: additive group of rationals, subgroup classification, $\PP$-logarithms, logarithmic gcd, intersection configurations.

\bigskip
\noindent
\textsc{MSC 2020}: 20K15, 20K20, 20F65.



\section*{Introduction}
The additive group of rational numbers $(\QQ,+)$ is one of the most classical examples of a torsion-free, non-(finitely generated) abelian group. Every finitely generated subgroup is cyclic, but arbitrary subgroups may
encode independent divisibility phenomena at infinitely many primes. Beaumont and Zuckerman gave a complete parametrization of the additive subgroups of $\QQ$  in~\cite{beaumontCharacterizationSubgroupsAdditive1951a}, using a positive integer together with a sequence of bounds on the powers of primes that may occur in denominators.

The purpose of this paper is twofold. First, we give an intrinsic logarithmic reformulation of this classical parametrization and use it to provide a unified description of the algebraic and order-theoretic structure of the set of subgroups of $(\QQ,+)$. Second, we apply this framework to determine the rank and finite generation patterns that can occur among intersections of finite families of subgroups. The latter problem is motivated by the
theory of intersection configurations recently developed
in~\cite{delgadoIntersectionConfigurationsFree2024}.

Let $\PP$ be the set of prime numbers. Our approach is based on encoding positive rational numbers via their prime exponents (also called \emph{valuations}
in number theoretical contexts),
see \cite{atiyah_introduction_1994,
fuchs_infinite_1970,
serre_course_1973,}. More precisely, every $r\in\QQ^+$ can be uniquely written as a finite product $r=\prod_{p\in\PP} p^{e_p}$ with $e_p\in\ZZ$, and hence identified with a finitely supported sequence $(e_p)_{p\in\PP}\in\Seq_0(\ZZ)$, called its logarithm, $\plog(r)$. 
The resulting bijection
$\QQ^+ \to \Seq_0(\ZZ)$
transforms multiplication of rational numbers into componentwise addition of finitely supported sequences.


We extend this correspondence from elements to subgroups by introducing the \emph{logarithmic greatest common divisor} of an arbitrary nonempty subset $R\subseteq\QQ^+$, defined as the coordinatewise infimum of the sequences in $\plog(R)$. This invariant, denoted by $\lgcd(R)$, is a sequence indexed by the prime
numbers with entries in $\Zb = \ZZ\cup\set{-\infty}$, and encodes precisely the subgroup of $(\QQ,+)$ generated by $R$. In particular, it leads to a bijective correspondence between the set $\Sgp^*(\QQ,+)$ of nontrivial subgroups of $(\QQ,+)$ (including the non finitely generated ones) and the set $\Seq_{\leq 0}(\Zb)$ of eventually nonpositive sequences in $\Zb$.

\begin{thm*}
The map
\[
\begin{array}{rcl}
\Sgp^*(\QQ,+) &\to &\Seq_{\leq 0}(\Zb)\\
H &\mapsto &\lgcd(H)
\end{array}
\]
is a bijection and an anti-isomorphism of partially ordered sets.
\end{thm*}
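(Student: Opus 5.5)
The plan is to set up an explicit inverse map and check that both composites are the identity. Then I will check that the order is reversed in both directions. Throughout, I use the conventions that $\lgcd(H)$ means $\lgcd(H\cap\QQ^+)$, and that $p^{-\infty}$ stands for "all negative powers of $p$ are allowed".

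\textbf{Well-definedness.} Let $H\neq 0$ be a subgroup of $(\QQ,+)$. Since $H=-H$, we have $H\cap\QQ^+\neq\emptyset$, so $\lgcd(H)$ is defined. Fix any $r\in H\cap\QQ^+$. Then $\lgcd(H)\le\plog(r)$ coordinatewise. As $\plog(r)$ is finitely supported, $\lgcd(H)$ is eventually $\le 0$, i.e.\ it lies in $\Seq_{\le0}(\Zb)$.

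\textbf{Inverse map.} For $\alpha=(a_p)\in\Seq_{\le0}(\Zb)$ define
\[
\Lambda(\alpha)=\{0\}\cup\{\pm r : r\in\QQ^+,\ \plog(r)\ge\alpha\}.
\]
\begin{itemize}
\item $\Lambda(\alpha)$ is a subgroup. Closure under negation is built in. For closure under sums, write $r=\prod p^{e_p}$ and $s=\prod p^{f_p}$, and set $d=\prod p^{\min(e_p,f_p)}$. Then $r\pm s=d\,(m\pm n)$ with $m,n\in\ZZ$. Since $v_p$ of an integer is $\ge0$, every $p$-adic valuation of a nonzero $r\pm s$ is $\ge\min(e_p,f_p)\ge a_p$.
\item $\Lambda(\alpha)\neq 0$. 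Because $\alpha$ is eventually $\le0$, the finitely supported sequence $e_p=\max(a_p,0)$ (with $\max(-\infty,0)=0$) satisfies $e_p\ge a_p$. So the corresponding positive rational lies in $\Lambda(\alpha)$.
\item $\lgcd(\Lambda(\alpha))=\alpha$. The inequality $\ge$ is immediate. For $\le$, fix a prime $q$ and take the rational whose sequence is $e$ as above, except at $q$, where it equals $a_q$ if $a_q$ is finite, or an arbitrarily negative integer if $a_q=-\infty$. This rational lies in $\Lambda(\alpha)$, so the infimum at $q$ is exactly $a_q$.
\end{itemize}

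\textbf{The main obstacle: $\Lambda(\lgcd(H))=H$.} The inclusion $H\subseteq\Lambda(\lgcd H)$ is clear. For the converse, take $r\in\QQ^+$ with $\plog(r)\ge\lambda:=\lgcd(H)$; I must show $r\in H$. The key lemma is that a subgroup containing $u,v\in\QQ^+$ contains $\gcd$-type element $\prod p^{\min(v_p(u),v_p(v))}$. Writing $u=dm$ and $v=dn$ with $\gcd(m,n)=1$, Bézout gives $d=xu+yv\in H$.

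Now let $S$ be the finite set of primes where $r$ has nonzero exponent, together with the primes where some fixed $h_0\in H\cap\QQ^+$ has nonzero exponent. For each $p\in S$, choose $h_p\in H\cap\QQ^+$ with $v_p(h_p)\le v_p(r)$. This is possible since $\inf v_p(H)=\lambda_p\le v_p(r)$, which also covers $\lambda_p=-\infty$.

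Iterating the lemma over $h_0$ and the $h_p$ produces $g\in H$ with $v_p(g)\le v_p(r)$ for all $p\in S$, and $v_p(g)\le v_p(h_0)=0$ for $p\notin S$. Hence $r/g\in\ZZ$, and so $r=(r/g)\,g\in H$. Negatives follow since $H=-H$.

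\textbf{Order reversal.} If $H\le K$, then the infimum over the larger set is smaller, so $\lgcd(K)\le\lgcd(H)$. Conversely, $\alpha\le\beta$ gives $\Lambda(\beta)\subseteq\Lambda(\alpha)$ directly from the definition. Since $\Lambda$ is the inverse of $\lgcd$, this shows the bijection is an anti-isomorphism of posets.
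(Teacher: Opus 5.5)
Your proposal is correct and follows essentially the same route as the paper. It uses the explicit inverse $\Lambda(\alpha)=H_{\alpha}$, proves closure via $\plog(|x\pm y|)\succeq\min(\plog(x),\plog(y))$, and proves $\Lambda(\lgcd H)\subseteq H$ by choosing finitely many elements indexed by the supports of a fixed $h_0\in H\cap\QQ^+$ and of $r$, exactly as in \Cref{lem: R iff R'}, followed by Bezout. The only differences are cosmetic: you iterate a two-element rational gcd lemma instead of clearing denominators and applying the integer Bezout identity as in \Cref{prop: Bezout Q}, and you justify $\inf(\fSucc\alpha)=\alpha$ more explicitly than the paper does.
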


Within this framework, the structure of the lattice of subgroups of $(\QQ,+)$ becomes remarkably transparent. Under the correspondence $H\mapsto\lgcd(H)$, subgroup inclusion corresponds to reverse coordinatewise order, while subgroup sum and intersection correspond, respectively, to coordinatewise minimum and maximum.
Thus, the subgroup lattice of $(\QQ,+)$ is anti-isomorphic to the partially ordered set $\Seq_{\leq 0}(\Zb)$ of eventually nonpositive sequences in $\Zb$.
This provides a unified logarithmic reformulation of several classical facts concerning subgroups of $\QQ$, including the cyclicity of finitely generated subgroups, the description of subgroup indices, and the classification of subgroups up to isomorphism or commensurability.

Besides providing a unified and self-contained account of all these facts, the logarithmic language makes new questions concerning families of intersections accessible. Motivated by the theory of intersection configurations
developed in~\cite{delgadoIntersectionConfigurationsFree2024}, we study the rank and finite-generation patterns that may occur among intersections of finite families of subgroups of $(\QQ,+)$.
Given subgroups $H_1,\ldots,H_k$ of a fixed subgroup $G\leqslant\QQ$, we record, for every nonempty set of indices $I \subseteq \{1,\ldots,k\}$, the rank of the intersection $H_I=\bigcap_{i\in I}H_i$. This produces a numerical pattern on the nonempty subsets of $\{1,\ldots,k\}$ which becomes boolean when we collapse all the finite ranks.

For rank configurations, every realizable pattern is decreasing, takes values only in $\{0,1,\infty\}$, and has the property that every minimal set on which it takes the value $0$ is a singleton. We also prove the following precise converse (see~\Cref{sec: intersections} for the precise definitions and notations).

\begin{thm*}
Let $G\leqslant\QQ$ be a nontrivial subgroup, and let $\chi$ be a rank
intersection configuration. Then $\chi$ is realizable in $G$ if and only if:
\begin{enumerate}[(i)]
\item $\chi$ is decreasing and takes values in $\{0,1,\infty\}$;
\item every minimal set on which $\chi$ takes the value $0$ is a singleton;
\item if $G$ has finite support, then
$|\mathcal M_\infty(\chi)| \leq |\supp_{-\infty}(\lgcd(G))|$.
\end{enumerate}
\end{thm*}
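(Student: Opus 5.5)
The approach is to translate everything through the anti-isomorphism $H\mapsto\lgcd(H)$ of the first Theorem. Under it, intersection corresponds to coordinatewise maximum and $H\leqslant G$ corresponds to $\lgcd(H)\geq\lgcd(G)$. I use two facts from the earlier part of the paper. First, a nontrivial subgroup is cyclic (rank $1$) exactly when its $\lgcd$ is finite everywhere with only finitely many negative entries. Second, it is non-finitely generated (rank $\infty$) exactly when $\lgcd$ has either a $-\infty$ entry or infinitely many negative entries. I read $\mathcal M_\infty(\chi)$ as the set of maximal index sets on which $\chi=\infty$. I also read "$G$ has finite support" as saying that $\lgcd(G)$ has only finitely many finite negative entries; this reading should be checked against the definition in \Cref{sec: intersections}.

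\textbf{Necessity.}
\begin{itemize}
\item Values and monotonicity. A subgroup of $\QQ$ is trivial, cyclic or non-finitely generated. For $I\subseteq J$ we have $H_J\leqslant H_I$, and a subgroup of a cyclic group is cyclic or trivial, so $\chi(J)\le\chi(I)$. This gives (i).
\item Zero sets. Any two nontrivial subgroups of $\QQ$ intersect nontrivially: $a/b,c/d\neq0$ have the common nonzero multiple $ac$. Hence $H_I=0$ iff some $H_i=0$ with $i\in I$, which gives (ii).
\item Cardinality. Suppose $G$ has finite support. Every $H_I\leqslant G$ has only finitely many finite negative entries, because they are bounded below by those of $\lgcd(G)$. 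So $H_I$ has rank $\infty$ iff there is a prime $p$ with $\lgcd(H_i)_p=-\infty$ for all $i\in I$, and such a $p$ lies in $\supp_{-\infty}(\lgcd(G))$.
\item For each $M\in\mathcal M_\infty(\chi)$ choose such a witness $p_M$. The set $\{i:\lgcd(H_i)_{p_M}=-\infty\}$ has rank $\infty$ and contains $M$, so by maximality it equals $M$. Distinct $M$ therefore receive distinct primes, which gives (iii).
\end{itemize}

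\textbf{Sufficiency.} Write $g=\lgcd(G)$. For indices with $\chi(\{i\})=0$ put $H_i=0$. For the other indices I build $\lgcd(H_i)$ coordinatewise as follows.
\begin{itemize}
\item Assign to each $M\in\mathcal M_\infty(\chi)$ a set of primes $S_M$, pairwise disjoint, on which $g$ is "infinite-making":
  \begin{itemize}
  \item in the finite-support case, $S_M=\{p_M\}$ with distinct $p_M\in\supp_{-\infty}(g)$, which is possible by (iii);
  \item otherwise, either $g$ has infinitely many $-\infty$ entries, or it has infinitely many finite negative entries, which I split into $|\mathcal M_\infty|$ disjoint infinite sets.
  \end{itemize}
\item Set $\lgcd(H_i)_p=g_p$ for $p\in S_M$ with $i\in M$, and $\lgcd(H_i)_p=\max(g_p,0)$ at every other prime. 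This sequence is $\geq g$, eventually nonpositive and lies in $\Seq_{\leq 0}(\Zb)$, so it defines some $H_i\leqslant G$.
\item Taking coordinatewise maxima, $\lgcd(H_I)$ agrees with $g$ on $S_M$ exactly when $I\subseteq M$, and is $\geq 0$ and finite elsewhere. Hence for $I$ avoiding zero indices, $H_I$ has rank $\infty$ iff $I\subseteq M$ for some $M$, which by monotonicity is iff $\chi(I)=\infty$; otherwise $H_I$ is cyclic.
\item Zero values are matched by (ii) together with the decreasing property: $\chi(I)=0$ iff $I$ contains a singleton with value $0$.
\end{itemize}

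The main obstacle is the bookkeeping in the necessity of (iii): making precise that in the finite-support case rank $\infty$ can only come from common $-\infty$ coordinates. The maximality argument then pins each maximal set to its own prime. I also expect care to be needed in matching "finite support" and $\mathcal M_\infty$ exactly to the paper's definitions.
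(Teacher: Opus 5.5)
Your proof is correct and follows essentially the same route as the paper. For necessity you use the rank trichotomy, nontriviality of intersections, and distinct witness primes in $\supp_{-\infty}(\lgcd(G))$ for distinct $\infty$-maximal sets (the paper's disjoint sets $\supp_{-\infty}(H_I)$ plus pigeonhole), and for sufficiency you use the same sequences ($g_p$ on the primes assigned to $M\ni i$, $\max(g_p,0)$ elsewhere), just unified into one construction, while your reading of "finite support" should be corrected.

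In the paper, $\supp$ counts all nonzero entries, including $-\infty$, so finite support means finitely many nonzero entries of $\lgcd(G)$. Under your reading, the claim that every $H_I\leqslant G$ has only finitely many finite negative entries fails when $\supp_{-\infty}(\lgcd(G))$ is infinite (e.g.\ $G=\QQ$); (iii) is vacuous there, so with the paper's definition your argument goes through verbatim.
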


In particular, a binary configuration is realizable if and only if it is decreasing and, when the ambient subgroup has finite support, satisfies the same cardinality constraint.
\bigskip

These results illustrate the main advantage of the logarithmic description: arithmetic restrictions on rational subgroups become
combinatorial conditions on integer-valued sequences, while intersections are reduced to coordinatewise maxima. In this way, the framework both clarifies the classical structure of the lattice of additive subgroups of $\QQ$ and provides an effective tool for studying new intersection phenomena.


\section{Numbers and sequences}

Throughout the paper, $\PP$, $\NN$, $\ZZ$ and $\QQ$ denote the set of prime numbers,
the set of natural numbers (including $0$), the set of integers, and the set of rational numbers, respectively. Several variations of these numeric sets are also considered: 
 \begin{itemize}
\item for $k\in \NN$ we denote by $\NN_{\geq k}=\set{n \in \NN \st n\geq k}$, and we write $[k]=\set{1,\ldots,k}$;
\item $\NN^+ =\NN_{\geq 1}$ denotes the set of positive natural numbers;
\item $\QQ^+ =\set{r\in \QQ \st r>0}$ denotes  the set of positive rational numbers;
\item if $S\subseteq \QQ$, we will write $S^{+}=S\cap \QQ^+$.
\end{itemize}

We will also consider the ordered set $\Zb=\ZZ\cup\{-\infty\}$, where $-\infty<z$ for all $z\in\ZZ$. Addition is extended by declaring $-\infty$ absorbing, that is,
 $$
z+(-\infty)=(-\infty)+z=-\infty \qquad \text{for all } z\in\Zb.
 $$
Whenever coordinatewise differences of elements of $\Zb$ occur, we use the convention
 $$
(-\infty)-(-\infty)=0 
\quad\text{and}
-(-\infty) = +\infty.
 $$
Then, for $a,b\in \Zb$, we define the \defin{distance between} $a$ and $b$ by
 $$
\dist(a,b)=\begin{cases} +\infty & \text{if exactly one of }a,b\text{ is }-\infty, \\ |a-b| & \text{otherwise.} \end{cases}
 $$
Thus $\dist(a,b)\in\NN\cup\{+\infty\}$.

\subsection{Sequences}

Sequences play a fundamental role in this paper. As we will see, it will be natural to consider them indexed by (the countable set) $\PP$. Given a set $A$, we write
 $$
\Seq(A)=\{\,\s{s}\mid \s{s}\colon \PP\to A\,\}
 $$
for the set of all sequences indexed by~$\PP$ and with values in~$A$. If $a\in Im(\s{s})$, we abuse notation and write $a\in \s{s}$. Sometimes, we will also refer to a sequence $\s{s}$ with the notation $\s{s}=(\s{s}(p))_{p\in \PP}$. The sequences considered in this paper will all be numeric; typically we will take $A$ to be $\NN$, $\ZZ$ or $\Zb$. 

If $P\subseteq \PP$, we denote by $\mathbf{1}_P\colon \PP \to \{0,1\}$ the indicator sequence of $P$, that is,
 \begin{equation}\label{eq: indicator}
\mathbf{1}_P(q)=\begin{cases} 1,& q\in P, \\ 0,& q\notin P. \end{cases}
 \end{equation}
In particular, for $p\in\PP$, $\mathbf{1}_p :=\mathbf{1}_{\{p\}}$ is the sequence with entry $1$ at position $p$ and entry $0$ elsewhere. For example,
$\mathbf{1}_5=(0,0,1,0,0,\ldots)$.

\medskip
\noindent
\textbf{Support and type.} For a numeric sequence $\s{s}\in \Seq(A)$, we define its \defin{support} as
 $$
\supp(\s{s})=\set{\,p\in\PP \mid \s{s}(p)\neq 0\,}.
 $$
We say that $\s{s}$ has \emph{finite support} if $|\supp(\s{s})|<\infty$. More generally, if $B \subseteq A$, we define the \defin{$B$-support} of $\s{s}\in \Seq(A)$ as
 $$
\supp_{B}(\s{s})=\set{\,p\in\PP \mid \s{s}(p) \in B\,}=\s{s} \preim(B).
 $$
Some relevant instances of this notation are:
\begin{enumerate}[(a)]
\item the \defin{positive support} of $\s{s}$, $\supp_{>0} (\s{s})=\{\,{p\in\PP} \mid \s{s}(p)>0\,\}$, 
\item the \defin{nonpositive support} of $\s{s}$, $\supp_{\leq 0} (\s{s})=\{\,{p\in\PP} \mid \s{s}(p)\leq 0\,\}$,
\item the \defin{$(-\infty)$-support} of $\s{s}$, $\supp_{-\infty}(\s{s})=\{\,{p\in\PP} \mid \s{s}(p)=-\infty\,\}$.
\end{enumerate}
Note that $\supp_{(-\infty)}(\s{s}) \subseteq \supp_{\leq 0}(\s{s})$ and $\PP =\supp_{>0}(\s{s}) \sqcup \supp_{\leq 0}(\s{s})$.

\medskip
A sequence $\s{s}\in\Seq(\Zb)$ is called \defin{positive} (\resp \defin{negative}, \defin{nonpositive}, \defin{nonnegative}) if $\s{s}(p)>0$ (\resp $\s{s}(p)<0$, $\s{s}(p)\leq 0$, $\s{s}(p)\geq 0$) for all $p\in\PP$. We prepend the adjective \emph{eventually} to any of these terms if the corresponding condition holds for all but finitely many primes $p\in\PP$.

More generally, given a subset $B \subseteq A$ we will denote by $\Seq_B(A)$ the set of sequences with all but finitely many entries in $B$, that is,
 \[
\Seq_B(A) \,=\, \set{s\in \Seq(A) \st s^{-1}(A \setmin B) \text{ is finite} }.
 \]
Accordingly, the set of sequences in $A$ with finite support is denoted by $\Seq_0(A)$, and the set of sequences with finite positive support (\ie the eventually nonpositive ones) is denoted by $\Seq_{\leq 0}(A)$. We say that a sequence $\s{s}\in\Seq(\Zb)$ is of \defin{finite type}
if it has both finite support and integral image (that is, if
$\s{s}\in\Seq_0(\ZZ)$), and of \defin{infinite type} otherwise. The latter occurs in the following (non-exclusive) situations:
\begin{itemize}
\item $\s{s}$ has infinite support (\ie $|\supp(\s{s})|=\infty$); we say that $\s{s}$ is of \defin{horizontal type}; 
\item at least one entry in $\s{s}$ is $-\infty$ (\ie $\supp_{-\infty}(\s{s})\neq \varnothing$); in this case, we say that $\s{s}$ is of \defin{vertical type}.
\end{itemize}

The \defin{distance} between two sequences $\s{s},\s{r}\in\Seq(\Zb)$ is defined by
 \begin{equation}
\dist(\s{s},\s{r})=\sum_{p\in\PP} \dist(\s{s}(p),\s{r}(p)) \in \NN\cup\{+\infty\}.
 \end{equation}
Finally, two sequences $\s{s},\s{r}$ are said to have the same \defin{vertical tail} if $\supp_{-\infty}(\s{s}) = \supp_{-\infty}(\s{r})$, and the same \defin{horizontal tail} if they differ at only finitely many primes. Note that $\dist(\s{s},\s{r})<+\infty$ if and only if $\s{s}$ and $\s{r}$ have the same horizontal and vertical tails.

\medskip

\noindent
\textbf{Order and operations on sequences.} Sequences on $A$ naturally inherit the algebraic (\resp order) structure from $A$, componentwise. More precisely, given $\s{s},\s{r}\in \Seq(A)$, we define:
 \begin{align*}
\s{s}\preceq \s{r} \ &\Leftrightarrow\ \s{s}(p)\leq \s{r}(p)\ \text{ for all } p\in\PP, \\ \s{s} + \s{r} &\,=\, (\s{s}(p) + \s{r}(p))_{p \in \PP}.
 \end{align*}
Accordingly, the maximum, minimum, supremum and infimum of sequences are also defined componentwise (whenever the relevant coordinatewise extrema exist). If $\mathcal{S}=\set{\s{s_i} \st i \in I} \subseteq \Seq(A)$, then 
 \begin{align*}
\sup(\mathcal{S}) \,=\, \Big( \sup_{i\in I}(\s{s_i}(p)) \Big)_{p \in \PP} \quad\text{and}\quad \inf(\mathcal{S}) \,=\, \Big( \inf_{i\in I}(\s{s_i}(p)) \Big)_{p \in \PP} \,.
 \end{align*}

\medskip
\noindent
\textbf{Successors.} Given $\s{s}\in\Seq(\Zb)$, we denote 
\begin{itemize}
    \item the \defin{set of successors} of $\s{s}$ by
 \begin{align*}
\Succ(\s{s})=\{\,\s{x}\in\Seq(\Zb)\mid \s{s}\preceq \s{x}\,\},
 \end{align*}
 \item the \defin{set of successors of $\s{s}$ with finite support} by 
 \[
 \oSucc \s{s}
    = \Succ \s{s} \cap \Seq_0(\Zb)
    = \set{\s{x} \in \Seq_0(\Zb) \st \s{s} \preceq \s{x}},
\]
 \item the \defin{set of successors of $\s{s}$ of finite type} by
 \[
\fSucc \s{s} 
    = \Succ \s{s} \cap \Seq_0(\ZZ)
    = \set{\s{x} \in \Seq_0(\ZZ) \st \s{s} \preceq \s{x}}.
 \]
\end{itemize}

\medskip
\noindent

\section{$\PP$-logarithms}

By the Fundamental Theorem of Arithmetic, every positive integer $n\geq 1$ admits a prime factorization, unique up to reordering,   
 \begin{equation}\label{eq: prime desc N}
n=\prod_{p\in \PP} p^{e_p},
 \end{equation}
where $e_p \in \NN$, and all but finitely many $e_p$'s are equal to~$0$. That is, the sequence of exponents $\s{e} = (e_p)_{p\in \PP}$ in \eqref{eq: prime desc N} is a sequence in $\Seq_0(\NN)$ uniquely associated to $n$; we call it the $\PP$-logarithm of $n$, and we denote it by $\plog(n)$.

\begin{defn}
Let $n=\prod_{p\in \PP} p^{e_p} \in \NN^+$, and let $p \in \PP$. Then, 
 \begin{enumerate}[(i)]
\item the \defin{$p$-logarithm} of $n$ is $\plog_p(n)=e_p \in \NN$;
\item the \defin{$\PP$-logarithm} of $n$ is $\plog(n)=(e_p)_{p\in \PP}\in \Seq_0(\NN)$.
\end{enumerate}
\end{defn}

 
In number theoretical contexts, $\plog_p(n)$ is usually called the \emph{$p$-valuation} of $n$. It is clear that the map $n\mapsto \plog(n)$ is a bijection between $\NN^+$ and~$\Seq_0(\NN)$ with inverse sending every sequence $\s{e} = (e_p)_{p\in \PP} \in \Seq_0(\NN)$ back to the positive natural number $\prod_{p \in \PP} p^{e_p}$, which we abbreviate as $\PP^{\,\s{e}}$. That is, we have a bijection:
 \begin{equation} \label{eq: plog N}
\begin{array}{rcl}
\plog \colon  \NN^+ & \to & \Seq_0(\NN) \\ n & \mapsto & \plog(n) \\ \PP^{\,\s{e}} & \mapsfrom & \s{e}.
\end{array}
 \end{equation}

Since the support of every sequence $\s{e} \in\Seq_0(\NN)$ is finite, we write $(0)_{p \in \PP}=\s{0}$, and we omit the trailing sequence of zeroes in every $\s{e} \neq \s{0}$. For example, we write $\plog(1)=\s{0}$ (\resp $\PP^{\,\s{0}}=1$) and $\plog(99)=\plog(3^2\cdot 11^1)=(0,2,0,0,1)$ (\resp $\PP^{\,(0,2,0,0,1)}=3^2\cdot 11^1=99$). So, $\PP$-logarithms of positive natural numbers can be viewed as vectors with natural entries of non-prefixed finite length (technically, of infinite length with zeroes from some coordinate on). 

In the proposition below we summarize some elementary but relevant properties of the $\PP$-logarithm map \eqref{eq: plog N} on positive integers. We assume the standard multiplication and divisibility relation
in $\NN$, and the component-wise sum
and order ($\preceq$)
in $\Seq_0(\NN)$.


\begin{prop}\label{prop: plog(N)} 
For every $n,m \in \NN^+$, and every $k \in \NN$,
 \begin{enumerate}[(i)]
\item if $n\pm m\neq 0$, then $\plog(|n\pm m|)\succeq \min\{ \plog(n),\, \plog(m)\}$;
\item \label{item: plog(mn) = plog(m) + plog(n)} $\plog(m n)=\plog(m)+\plog(n)$;
\item $\plog(n^k)=k\plog(n)$;
\item \label{item: order} $\plog(m) \preceq \plog(n)$ if and only if $m \divides n$. 
\end{enumerate}
In particular, the bijective map~\eqref{eq: plog N}
is an isomorphism of partially ordered monoids between ${(\NN^+,\cdot,\divides)}$ and $(\Seq_0(\NN),+,\preceq)$. \qed
\end{prop}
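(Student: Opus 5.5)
The plan is to reduce everything to the Fundamental Theorem of Arithmetic. Fix $n=\prod_p p^{a_p}$ and $m=\prod_p p^{b_p}$, so that $\plog(n)=(a_p)_p$ and $\plog(m)=(b_p)_p$. I will prove the items in the order (ii), (iii), (iv), (i), since (i) is most naturally derived from (iv) together with the multiplicativity in (ii).

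For (ii), multiply the two factorizations. This gives
\[
mn=\prod_p p^{a_p+b_p},
\]
which is again a finite product, and it is a factorization of $mn$. By uniqueness of factorization, $\plog_p(mn)=a_p+b_p$ for every $p$. Item (iii) then follows by induction on $k$ from (ii), with base case $\plog(n^0)=\plog(1)=\s{0}$.

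For (iv), first suppose $m\divides n$, say $n=mc$ with $c\in\NN^+$. By (ii), $\plog(n)=\plog(m)+\plog(c)$, and $\plog(c)\succeq\s{0}$, so $\plog(m)\preceq\plog(n)$. Conversely, suppose $\plog(m)\preceq\plog(n)$. Then $\plog(n)-\plog(m)\in\Seq_0(\NN)$, so $c=\PP^{\,\plog(n)-\plog(m)}$ is a positive integer. By (ii), $\plog(mc)=\plog(n)$, and injectivity of \eqref{eq: plog N} gives $mc=n$, so $m\divides n$.

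For (i), let $\s{d}=\min\{\plog(n),\plog(m)\}$ coordinatewise. This sequence lies in $\Seq_0(\NN)$, so $d=\PP^{\,\s{d}}$ is a positive integer. By (iv), $d$ divides both $n$ and $m$, hence it divides $|n\pm m|$, which is a positive integer by hypothesis. Applying (iv) once more gives $\s{d}\preceq\plog(|n\pm m|)$. The point to be careful about is that (iv) requires positive integers, and the hypothesis $n\pm m\neq0$ is exactly what guarantees this.

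The final assertion is a summary of what has been shown. The map \eqref{eq: plog N} is a bijection by existence and uniqueness of factorization. It is a monoid homomorphism by (ii), since $\plog(1)=\s{0}$. It is an order isomorphism by (iv). No step is a genuine obstacle; the only subtle point is the positivity hypothesis in (i) just mentioned.
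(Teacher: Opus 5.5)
Your proof is correct. The paper gives no proof of this proposition, treating it as an immediate consequence of the Fundamental Theorem of Arithmetic, and your argument is exactly the routine verification the authors leave implicit: (ii) from uniqueness of factorization, (iii) by induction, (iv) via (ii) and injectivity of $\plog$, and (i) by noting that $\PP^{\,\min\{\plog(n),\plog(m)\}}$ is a common divisor of $n$ and $m$.
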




This isomorphism allows us to smoothly translate divisibility notions in $\NN^+$ to the language of integer sequences (or $\PP$-logarithms). For example, the standard notions of \emph{greatest common divisor} ($\gcd$) and \emph{least common multiple} ($\lcm$), take a particularly neat logarithmic form.

\begin{lem}\label{lem: gcd N}
Let $S$ be a nonempty subset of $\NN^+$. Then,
 $$
\plog(\gcd(S))=\min(\plog(S))\in \Seq_0(\NN).    
 $$
In particular, $S$ is a coprime family if and only if $\min(\plog(S)) = \s{0}$. Moreover, if $S$ is finite then $\plog(\lcm(S)) = \max(\plog(S)) \in \Seq_0(\NN)$. \qed
\end{lem}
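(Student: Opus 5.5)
The plan is to transport everything through the isomorphism of Proposition~\ref{prop: plog(N)}, which identifies $(\NN^+,\cdot,\divides)$ with $(\Seq_0(\NN),+,\preceq)$. Under this identification, the common divisors of $S$ correspond exactly to the sequences $\s{e}\in\Seq_0(\NN)$ with $\s{e}\preceq \plog(s)$ for every $s\in S$. Indeed, by item~\eqref{item: order}, $d\divides s$ holds if and only if $\plog(d)\preceq\plog(s)$. So $\plog(d)$ is a common lower bound of $\plog(S)$ in the componentwise order.

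The first step is to check that $\s{m}:=\min(\plog(S))$, computed coordinatewise, is a well-defined element of $\Seq_0(\NN)$. Each coordinate takes the minimum of a nonempty set of natural numbers, so the minimum exists. Now fix any $s_0\in S$. Then $\s{0}\preceq\s{m}\preceq\plog(s_0)$, and therefore $\supp(\s{m})\subseteq\supp(\plog(s_0))$ is finite. So $\s{m}\in\Seq_0(\NN)$, and $g:=\PP^{\,\s{m}}\in\NN^+$.

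The second step is that $g$ is the greatest common divisor. It is a common divisor because $\s{m}\preceq\plog(s)$ for all $s$. For any common divisor $d$, we have $\plog(d)\preceq\plog(s)$ for every $s\in S$. Taking the minimum coordinatewise gives $\plog(d)\preceq\s{m}$, which means $d\divides g$. Hence $g=\gcd(S)$, and $\plog(\gcd(S))=\s{m}$. The coprimality statement follows at once, since $\gcd(S)=1$ if and only if $\plog(\gcd(S))=\s{0}=\plog(1)$.

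For the $\lcm$, the argument is dual but uses finiteness of $S$. The coordinatewise $\max(\plog(S))$ is a maximum of finitely many naturals at each prime. Its support is contained in the finite union $\bigcup_{s\in S}\supp(\plog(s))$, so it lies in $\Seq_0(\NN)$. A number $\ell$ is a common multiple of $S$ if and only if $\plog(s)\preceq\plog(\ell)$ for all $s$, that is, if and only if $\max(\plog(S))\preceq\plog(\ell)$. So $\PP^{\,\max(\plog(S))}$ is the least common multiple. The only point needing care is this finiteness/support check, which also explains why the $\lcm$ claim is restricted to finite $S$: for infinite $S$ the maximum may fail to exist or have infinite support. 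Otherwise the argument is routine.
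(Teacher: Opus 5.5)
Your proof is correct and takes the approach the paper intends. The paper gives no proof and treats the lemma as an immediate consequence of the isomorphism of partially ordered monoids $(\NN^+,\cdot,\divides)\cong(\Seq_0(\NN),+,\preceq)$ in Proposition~\ref{prop: plog(N)}; you make explicit that this order isomorphism sends greatest common divisors to coordinatewise minima and, for finite $S$, least common multiples to coordinatewise maxima, and you check that these extrema lie in $\Seq_0(\NN)$.
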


    

It turns out that the $\PP$-logarithm not only smoothly encodes the multiplicative monoid of positive integers but also contains relevant information about the \emph{additive} group of integers. This connection follows from the translation of Bezout's identity to the language of $\PP$-logarithms, and is summarized below.

Let $(\ZZ,+)$ be the additive group of integers. For every subset $S\subseteq \ZZ$, let us denote by $\gen{S}$ the additive subgroup of $\ZZ$ generated by $S$,
and write $\gen{S}^{+}=\gen{S}\cap \NN^+$.


\begin{prop}\label{prop: Bezout N}
Let $\emptyset \neq S\subseteq \NN^+$, and let $x\in \NN^+$. Then,
 \begin{equation} \label{eq: Bezout}
x\in \gen{S} \ \Longleftrightarrow \ \plog(x) \succeq \min(\plog(S)) \,.
 \end{equation}
Equivalently, $\gen{S} =\pm \PP^{\,\oSucc \min(\plog S)}\cup \{0\}.$
\end{prop}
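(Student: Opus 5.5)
The plan is to reduce the statement to Bézout's identity via the gcd description in \Cref{lem: gcd N}. Put $d=\gcd(S)$. Since $S\neq\emptyset$ and $S\subseteq\NN^+$, $d$ is a well-defined positive integer, and \Cref{lem: gcd N} gives $\plog(d)=\min(\plog(S))$. The argument then has three steps.

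\begin{enumerate}[(1)]
\item Show $\gen{S}=d\ZZ$.
\item Use this to prove the equivalence \eqref{eq: Bezout}.
\item Deduce the description of $\gen{S}$ as a set.
\end{enumerate}

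Step (1) is the only real step. The subgroup $\gen{S}$ is nonzero, so by the Euclidean division argument $\gen{S}=g\ZZ$, where $g$ is its least positive element. Each $s\in S$ lies in $g\ZZ$, so $g$ divides every element of $S$, hence $g\divides d$. Conversely, $d$ divides every element of $S$, so $d$ divides every integer combination of elements of $S$; since $g\in\gen{S}$ is such a combination, $d\divides g$. Both are positive, so $g=d$. This is the usual Bézout argument. It works even when $S$ is infinite, because any particular element of $\gen{S}$ is a finite combination.

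For step (2), let $x\in\NN^+$. By step (1), $x\in\gen{S}$ if and only if $d\divides x$. By \Cref{prop: plog(N)}\eqref{item: order}, this holds if and only if $\plog(d)\preceq\plog(x)$, that is, $\plog(x)\succeq\min(\plog(S))$. This proves \eqref{eq: Bezout}.

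For step (3), note that $\gen{S}$ is closed under negation and contains $0$. So its elements are $0$ together with $\pm x$ for the positive $x\in\gen{S}$. By step (2), these positive elements are exactly the $x=\PP^{\,\s{e}}$ with $\s{e}\in\Seq_0(\NN)$ and $\s{e}\succeq\min(\plog S)$. Since $\min(\plog S)\succeq\s{0}$, every successor of $\min(\plog S)$ automatically has nonnegative entries. Hence the finitely supported successors of $\min(\plog S)$ are exactly these $\s{e}$, and the set $\oSucc\min(\plog S)$ is precisely this family. This gives $\gen{S}=\pm\PP^{\,\oSucc \min(\plog S)}\cup\{0\}$.

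There is no genuine obstacle here. The only care needed is in step (1) for infinite $S$, where the arguments above are stated for arbitrary (not necessarily finite) $S$.
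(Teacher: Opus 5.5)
Your proof is correct and is essentially the paper's argument. You identify $\gen{S}$ with $\gen{\gcd(S)}$ by B\'ezout, then chain $x\in\gen{S}^+ \Leftrightarrow \gcd(S)\divides x \Leftrightarrow \plog(\gcd(S))\preceq\plog(x) \Leftrightarrow \min(\plog(S))\preceq\plog(x)$. The only difference is that you prove the B\'ezout step $\gen{S}=\gcd(S)\ZZ$ directly for possibly infinite $S$ via the least positive element, where the paper cites it as well known; you also make explicit why $\oSucc\min(\plog S)\subseteq\Seq_0(\NN)$.
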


\begin{proof}
From Bezout's identity, it is well-known that $\gen{S}=\gen{\gcd(S)}$. And from the above discussion, 
 \begin{align*}
x \in \gen{S}^+ &\,\Leftrightarrow\, x \in \gen{\gcd(S)}^+ \\ &\,\Leftrightarrow\, \gcd(S) \divides x \\ &\,\Leftrightarrow\, \plog(\gcd(S)) \preceq \plog(x) \\
&\,\Leftrightarrow\, \min(\plog(S)) \preceq \plog(x). 
 \end{align*}
The equality $\gen{S}=\pm \PP^{\,\oSucc \min(\plog S)}\cup \{0\}$ follows. 
\end{proof}


\bigskip
Our next goal is to extend the $\PP$-logarithm function to the set of positive rational numbers $\QQ^+ = \set{r\in \QQ \st r>0}$, and to show how the logarithmic approach to subgroups of $(\ZZ,+)$ can be extended to obtain a neat description of subgroups of the additive group $(\QQ,+)$. For $R\subseteq \QQ$, we denote by $\gen{R}$ the additive subgroup of~$\QQ$ generated by $R$, and we write $R^{+}=R\cap \QQ^+$.


\begin{defn}
The \defin{$\PP$-logarithm} of a positive rational number $a/b\in \QQ^{+}$ is
 $$
\plog(a/b)=\plog(a)-\plog(b)\in \Seq_0(\ZZ).
 $$
For every $p\in \PP$, we define $\plog_p =\pi_p \circ \plog$, where $\pi_p$ is the projection to the $p$-th coordinate on $\Seq_0(\ZZ)$. For $r\in \QQ^+$, $\plog_p(r)\in \ZZ$ is also known as the \emph{$p$-valuation} of the rational number $r$. We have $\plog(r) = (\plog_p(r))_{p\in\PP}$.
\end{defn}

It is clear that this is well  defined since, by \Cref{prop: plog(N)}.\ref{item: plog(mn) = plog(m) + plog(n)}, for every $c \in \NN^{+}$, $\plog(ac/bc)=\plog(a/b)$. In other words, every positive rational $a/b\in \QQ^+$ admits a prime factorization, unique up to reordering,   
 \begin{equation}\label{eq: prime desc Z}
\frac{a}{b}=\prod_{p\in \PP} p^{e_p},
 \end{equation}
where $e_p \in \ZZ$, and all but finitely many $e_p$'s are equal to~$0$; and the sequence of exponents $\s{e}=(e_p)_{p\in \PP}$ in~\eqref{eq: prime desc Z} is the $\PP$-logarithm of $a/b$, $\plog (a/b)$, a sequence in $\Seq_0(\ZZ)$ uniquely associated to $a/b$.

Since the $\PP$-logarithm of any positive rational number is a sequence of integers with finite support, as we did for positive integers, we omit the trailing zeroes in $\plog(a/b)$; for example, we write $\plog(25/27)=(0,-3,2)$.    
 
\begin{prop}\label{prop: plog(Q)}
The map
 \begin{equation} \label{eq: plog(Q)}
\begin{array}{rcl}
\plog \colon \QQ^{+} & \to & \Seq_0(\ZZ)  \\
a/b & \mapsto & \plog{a/b}
\end{array}
 \end{equation}
is 
a bijection 
with inverse $\s{e} \mapsto \PP^{\,\s{e}} = \prod_{p\in \PP} p^{e_p}$. Moreover, for every  
$r,s \in \QQ^+$ and every $k\in \ZZ$
 \begin{enumerate}[(i)]
\item\label{item: logsuma} if $r\pm s\neq 0$, then $\plog(|r\pm s|)\succeq \min\{ \plog(r),\, \plog(s)\}$;
\item \label{item: plog(rs) = plog(r) + plog(s)} $\plog(rs)=\plog(r)+\plog(s)$; 
\item $\plog(r^k)=k\plog(r)$;
\item \label{item: order log(Q)} $\plog(r)\preceq \plog(s)$ if and only if $s/r\in \NN^+$. \qed 
 \end{enumerate}
\end{prop}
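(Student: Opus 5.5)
The plan is to reduce everything to the integer case, Proposition~\ref{prop: plog(N)}, by writing rationals over a common denominator. First, well-definedness of $\plog$ on $\QQ^+$ was already noted. For bijectivity, I use the fact that $\PP^{\,\s{e}}$ makes sense for every $\s{e}\in\Seq_0(\ZZ)$, being a finite product. I then split $\s{e}=\s{e}^+-\s{e}^-$ into its positive and negative parts, both in $\Seq_0(\NN)$. By definition, $\plog(\PP^{\,\s{e}^+}/\PP^{\,\s{e}^-})=\s{e}^+-\s{e}^-=\s{e}$, so $\plog\circ\PP^{(\cdot)}$ is the identity. Conversely, for $r=a/b$ we have $\PP^{\,\plog(a)-\plog(b)}=\PP^{\,\plog(a)}/\PP^{\,\plog(b)}=a/b$, using the bijection \eqref{eq: plog N}. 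Hence the two maps are mutually inverse.

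Next I prove (ii) and (iii), since (i) and (iv) use them. For (ii), write $r=a/b$ and $s=c/d$ with $a,b,c,d\in\NN^+$. Then $\plog(rs)=\plog(ac)-\plog(bd)$, and this equals $\plog(r)+\plog(s)$ by Proposition~\ref{prop: plog(N)}.\ref{item: plog(mn) = plog(m) + plog(n)}. For (iii), the case $k\ge 0$ follows from (ii) by induction. For negative $k$, note that $\plog(1/r)=\plog(b)-\plog(a)=-\plog(r)$, and combine this with the nonnegative case.

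For (iv), I use (ii) to get $\plog(s)-\plog(r)=\plog(s/r)$. So the claim reduces to showing that a positive rational $t$ satisfies $\plog(t)\succeq\s{0}$ if and only if $t\in\NN^+$. If $t\in\NN^+$, then $\plog(t)\in\Seq_0(\NN)$, so it is nonnegative. Conversely, if $\plog(t)$ is nonnegative, then $\plog(t)\in\Seq_0(\NN)$, so $t=\PP^{\,\plog(t)}\in\NN^+$ by injectivity and \eqref{eq: plog N}.

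Finally, for (i) I clear denominators. Take a common denominator $d\in\NN^+$ and write $r=m/d$, $s=n/d$ with $m,n\in\NN^+$. Then $|r\pm s|=|m\pm n|/d$, and by (ii) together with Proposition~\ref{prop: plog(N)}(i),
\[
\plog(|r\pm s|)=\plog(|m\pm n|)-\plog(d)\succeq \min\{\plog(m),\plog(n)\}-\plog(d).
\]
Subtracting the fixed sequence $\plog(d)$ commutes with the coordinatewise minimum, so the right-hand side equals $\min\{\plog(r),\plog(s)\}$. The only point needing care is this commutation of $\min$ with translation, which is immediate coordinatewise. I do not expect a real obstacle anywhere: the main thing to check is that every step goes through well-defined integer representatives, which the common-denominator trick guarantees.
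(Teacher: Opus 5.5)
Your proof is correct and follows the route the paper intends. The paper states this proposition without proof (it is closed with \qed), treating it as a direct reduction to Proposition~\ref{prop: plog(N)} by writing positive rationals as quotients of positive integers, and your argument carries out exactly that reduction: the common-denominator step for (i), and the reduction of (iv) to $\plog(s/r)\succeq\s{0}$ via (ii).
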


\begin{cor}
The multiplicative group of positive rationals is free-abelian of countably infinite rank; that is, $(\QQ^+,\cdot) \isom \big(\bigoplus_{p \in \PP}\ZZ, + \big)$. Moreover, the multiplicative group of rationals $(\QQ^*,\cdot)$ is isomorphic to $\ZZ/2\ZZ \times \bigoplus_{p\in \PP}\ZZ$. \qed
\end{cor}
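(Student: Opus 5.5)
The statement to prove is the final corollary: $(\QQ^+,\cdot)\isom \bigoplus_{p\in\PP}\ZZ$, and $(\QQ^*,\cdot)\isom \ZZ/2\ZZ\times\bigoplus_{p\in\PP}\ZZ$.

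For the first claim, I will use the bijection $\plog\colon\QQ^+\to\Seq_0(\ZZ)$ of \Cref{prop: plog(Q)}. By item~\ref{item: plog(rs) = plog(r) + plog(s)}, $\plog(rs)=\plog(r)+\plog(s)$, so $\plog$ is a bijective group homomorphism from $(\QQ^+,\cdot)$ to $(\Seq_0(\ZZ),+)$, hence an isomorphism.

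Next I identify $(\Seq_0(\ZZ),+)$ with $\bigoplus_{p\in\PP}\ZZ$. This is immediate from the definitions: the direct sum is exactly the group of finitely supported $\ZZ$-valued families indexed by $\PP$, with componentwise addition. It is free abelian with basis $\{\mathbf{1}_p\}_{p\in\PP}$, and under $\plog$ these basis elements correspond to the primes themselves. Since $\PP$ is countably infinite (Euclid), the rank is countably infinite.

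For $\QQ^*$, I will use the sign decomposition $\QQ^*=\{\pm1\}\times\QQ^+$. The map $(\varepsilon,r)\mapsto\varepsilon r$ from $\{\pm1\}\times\QQ^+$ to $\QQ^*$ is a homomorphism because multiplication is commutative. It is surjective because every $x\in\QQ^*$ equals $\operatorname{sgn}(x)\,|x|$. It is injective because $\varepsilon r=1$ with $r>0$ forces $\varepsilon=1$ and $r=1$. Finally, $\{\pm1\}\isom\ZZ/2\ZZ$, and combining this with the first part gives the second isomorphism.

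There is no real obstacle here; the only point to state explicitly is that $\Seq_0(\ZZ)$ with componentwise addition is literally the direct sum, not the direct product.
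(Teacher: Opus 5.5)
Your proposal is correct and matches the paper. The paper states this corollary with \qed as an immediate consequence of \Cref{prop: plog(Q)}. That proposition makes $\plog$ a bijective homomorphism $(\QQ^+,\cdot)\to(\Seq_0(\ZZ),+)=\bigoplus_{p\in\PP}\ZZ$, and combining it with the sign splitting $\QQ^*=\{\pm1\}\times\QQ^+$ gives the second claim, which is exactly your argument.
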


We can use these extended $\PP$-logarithms to generalize the notions of $\gcd$ and $\lcm$ to the realm of rational numbers. To this end, we need to pass from 
$\ZZ$ to its order-completion obtained by adjoining the extreme element $-\infty$.



\begin{defn}
Let $R$ be a nonempty subset of $\QQ^+$. Then,
 \begin{enumerate}[(i)]
\item the \defin{logarithmic $\gcd$} of $R$ is $\lgcd(R) = \inf(\plog(R)) \in \Seq(\Zb)$;
\item the \defin{logarithmic $\lcm$} of $R$ is $\llcm(R) = \sup(\plog(R)) \in \Seq(\ZZ\cup\{+\infty\})$.
 \end{enumerate}
For notational convenience, we define 
 \[
\lgcd(R):=\lgcd\bigl(\{|r| \st r\in R\setmin\{0\}\}\bigr),
 \]
for every subset $\emptyset, \{0\}\neq R\subseteq \QQ$. Moreover, whenever these logarithmic extrema are of finite type (in particular, whenever~$R$ is finite), we define
 \begin{enumerate}[(i)]
\item the \defin{greatest common divisor} of $R$ as $\gcd(R)=\PP^{\,\lgcd(R)} \in \QQ^+$, and
\item the \defin{least common multiple} of $R$ as $\lcm(R)=\PP^{\,\llcm(R)} \in\QQ^+$.
\end{enumerate}
Of course, both definitions extend the corresponding notions for integers.
\end{defn}

\begin{rem}
Note that, by~\Cref{prop: Bezout N}, if $S\subseteq \ZZ^+$, then the generator of the cyclic subgroup $\gen{S}\leqslant \ZZ$ is $\gcd(S)=\PP^{\,\lgcd(S)}\in\ZZ$,
and hence $\gen{S}=\pm\PP^{\,\oSucc \lgcd(S)}\cup\{0\}$. When passing to the more general context $R\subseteq\QQ^+$, the expression $\gcd(R)= \PP^{\,\lgcd(R)}\in\QQ$ may no longer make sense, since one may have $\lgcd_p(R)=-\infty$ for some primes $p$, and/or $\lgcd_p(R)\neq 0$ for infinitely many $p$'s. Nevertheless, the equality $\gen{R}= \pm\PP^{\,\fSucc \lgcd(R)}\cup\{0\}$ still holds, as shown in~\Cref{prop: Bezout Q}. In this sense, \Cref{prop: Bezout Q} can be viewed as a rational analogue of Bezout's theorem: every subgroup $\gen{R}$ of $(\QQ,+)$, including non-(finitely generated) ones, is ``morally cyclic'' in the sense of being determined by a unique object, with $\lgcd(R)\in\Seq(\Zb)$ playing the role of the logarithmic generator; see also~\Cref{cor: lgcd R = lgcd H}.
\end{rem}

The logarithmic representation of rational numbers is illustrated in~\Cref{fig:compare-two-plogs}, where the logarithmic gcd and lcm appear as the coordinatewise minimum and maximum, respectively, of two $\PP$-logarithms.

\begin{figure}[h]
\centering
\begin{tikzpicture}[
    x=0.55cm,y=0.55cm,
    axis/.style={->, thin},
    tick/.style={thin},
    every node/.style={font=\small}
]

\node[anchor=south west] at (0.6,5) {%
\begin{tabular}{ll}
\textcolor{colr}{$r=\tfrac{375}{44}$}
&
\textcolor{colr}{$\plog(r)=(-2,1,3,0,-1)$}
\\[2mm]
\textcolor{cols}{$s=\tfrac{1666}{33}$}
&
\textcolor{cols}{$\plog(s)=(1,-1,0,2,-1,0,1)$}
\\[2mm]
\textcolor{colgcd}{$\gcd(r,s)=\tfrac{1}{132}$}
&
\textcolor{colgcd}{$\lgcd(r,s)=(-2,-1,0,0,-1)$}
\\[2mm]
\textcolor{collcm}{$\operatorname{lcm}(r,s)=\tfrac{624750}{11}$}
&
\textcolor{collcm}{$\llcm(r,s)=(1,1,3,2,-1,0,1)$}
\end{tabular}
};

\draw[-, gray!40, thin, dotted] (0,0) -- (13.5,0);

\draw[axis] (0,-3.8) -- (0,3.8) node[above] {$\plog_p$};

\foreach \yy in {-3,-2,-1,0,1,2,3}
{
  \draw[tick] (-0.08,\yy) -- (0.08,\yy);
  \node[left] at (-0.08,\yy) {\yy};
}

\node[below] at (13,0.35) {$\cdots$};

\node at (13.8,0) {$p$};

\foreach \x/\h in {
    1/-2,2/1,3/3,4/0,5/-1,6/0,
    7/0,8/0,9/0,10/0,11/0,12/0
}
{
  \fill[colr] (\x,\h) circle[radius=1pt];
}

\foreach \x/\h in {
    1/1,2/-1,3/0,4/2,5/-1,6/0,
    7/1,8/0,9/0,10/0,11/0,12/0
}
{
  \draw[cols, line width=0.8pt]
    (\x,\h) circle[radius=1.8pt];
}

\foreach \x/\h in {
    1/-2,2/-1,3/0,4/0,5/-1,6/0,
    7/0,8/0,9/0,10/0,11/0,12/0
}
{
  \draw[colgcd, line width=1pt]
    (\x-0.22,\h-0.18) -- (\x+0.22,\h-0.18);
}

\foreach \x/\h in {
    1/1,2/1,3/3,4/2,5/-1,6/0,
    7/1,8/0,9/0,10/0,11/0,12/0
}
{
  \draw[collcm, line width=1pt]
    (\x-0.22,\h+0.18) -- (\x+0.22,\h+0.18);
}

\end{tikzpicture}
\caption{Comparison of two $\PP$-logarithms and their logarithmic $\gcd$ and $\lcm$}
\label{fig:compare-two-plogs}
\end{figure}

We finally obtain the desired extension of~\Cref{prop: Bezout N} to the rationals, which is a consequence of the following lemma.

\begin{lem} \label{lem: R iff R'}
Let $R$ be a nonempty subset of $\QQ^+$ and let $x \in \QQ^+$. Then, 
 $$
\plog(x) \succeq \lgcd(R) \ \Leftrightarrow\  \plog(x) \succeq \lgcd(R') \,\text{ for some finite } R'\subseteq R.
 $$
\end{lem}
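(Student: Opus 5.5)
The backward implication is immediate from the definitions. If $R'\subseteq R$, then $\plog(R')\subseteq\plog(R)$, so the coordinatewise infimum over the larger set is smaller: $\lgcd(R)\preceq\lgcd(R')$. Hence $\plog(x)\succeq\lgcd(R')$ implies $\plog(x)\succeq\lgcd(R)$ by transitivity of $\preceq$.

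For the forward implication, the plan is to exploit that $\plog(x)\in\Seq_0(\ZZ)$ has finite support and integer entries. For each prime $p$ we have $\plog_p(x)\ge\inf_{r\in R}\plog_p(r)$, and the task is to pick finitely many $r\in R$ whose minimum already lies below $\plog(x)$ at every prime. We split $\PP$ into two parts.

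\emph{Primes in $\supp(\plog(x))$.} This set is finite. For each such $p$, the value $\lgcd_p(R)$ is either $-\infty$ or an integer. The infimum of a nonempty subset of $\ZZ$ that is bounded below is attained, and if the subset is unbounded below there are elements below any integer. In both cases we find $r_p\in R$ with $\plog_p(r_p)\le\plog_p(x)$. This uses $\plog_p(x)\in\ZZ$ and $\plog_p(x)\ge\lgcd_p(R)$.

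\emph{Primes outside the support.} Here $\plog_p(x)=0$, so we need some element of $R'$ with $\plog_p\le 0$ at each such $p$. Fix any $r_0\in R$. Since $\plog(r_0)$ has finite support, $\plog_p(r_0)=0\le\plog_p(x)$ for all $p$ outside the finite set $F=\supp(\plog(r_0))$. For each of the finitely many $p\in F\setminus\supp(\plog(x))$, we have $0=\plog_p(x)\ge\lgcd_p(R)$, so the argument of the first step again gives some $r_p\in R$ with $\plog_p(r_p)\le 0$.

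Finally, set
\[
R'=\{r_0\}\cup\{r_p \st p\in\supp(\plog(x))\cup F\},
\]
which is a finite subset of $R$. At each prime $p$, $\min_{r\in R'}\plog_p(r)\le\plog_p(x)$: this is witnessed by $r_p$ when $p$ lies in $\supp(\plog(x))\cup F$, and by $r_0$ otherwise. The minimum over a finite set equals the infimum, so $\lgcd(R')\preceq\plog(x)$.

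The only delicate point is the second step, the infinitely many primes where $\plog(x)$ vanishes. The key idea there is that a single element $r_0$ already handles all but finitely many coordinates, because every $\plog(r)$ has finite support. Beyond that, one must only take care to treat the cases $\lgcd_p(R)=-\infty$ and $\lgcd_p(R)\in\ZZ$ uniformly via the "infimum in $\Zb$" argument of the first step.
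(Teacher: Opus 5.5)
Your proposal is correct and follows essentially the same route as the paper. Both proofs choose witnesses $r_p$ on the finite set $S=\supp(\plog(r_0))\cup\supp(\plog(x))$ and use $r_0$ to handle every other prime; your only addition is that you spell out why each $r_p$ exists, since an infimum of integers lying below the integer $\plog_p(x)$ is either attained or $-\infty$.
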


\begin{proof}
The implication to the left is clear for every subset $R'\subseteq R$: indeed, if $R'\subseteq R$ then $\plog(R')\subseteq \plog(R)$, hence $\lgcd(R) =\inf(\plog(R))\preceq \inf(\plog(R'))=\lgcd(R')$.

For the implication to the right, assume that $\plog(x) \succeq \inf(\plog(R))$, \ie for every $p\in \PP$, $\plog_p(x) \geq \inf(\plog_p(R))$, and take $r_p \in R$ such that $\plog_p(x) \geq \plog_p(r_p)$. Now, take $r_0 \in R$, let $S=\supp(\plog(r_0)) \cup \supp(\plog(x))$, and define the finite subset
 $$
R'\,=\, \set{r_0}\cup \set{r_p \st p\in S} \subseteq R. 
 $$
Then, given a prime $p\in \PP$,
 \begin{itemize}
\item if $p\in S$ then $\plog_p(x) \geq \plog_p(r_p) \geq \inf(\plog_p(R'))$, 
\item if $p\notin S$ then $p \notin \supp(\plog(x))$ and $p \notin \supp(\plog(r_0))$; therefore, 
 $$ 
\plog_p(x) =0=\plog_p(r_0)\geq \inf(\plog_p(R')).
 $$
\end{itemize}
That is, $\plog(x) \succeq \inf (\plog(R'))$ with $R'$ finite, as we wanted to see.
\end{proof}

\begin{prop} \label{prop: Bezout Q}
Let $R$ be a nonempty subset of $\QQ^+$, and let $x\in \QQ^+$. Then,
 \begin{equation}\label{eq: Bezout Q}
x\in \gen{R} \ \Longleftrightarrow \ \plog(x) \succeq \lgcd(R) \,.
 \end{equation}
Equivalently, $\gen{R} =\pm \PP^{\,\fSucc \lgcd(R)} \cup \{0\}$.
\end{prop}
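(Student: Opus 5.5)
We will prove the two implications of \eqref{eq: Bezout Q} separately, reducing the nontrivial one to the finite case through \Cref{lem: R iff R'}. The finite case is in turn reduced to \Cref{prop: Bezout N} by clearing denominators.

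For ($\Rightarrow$), let $x\in\gen{R}$ with $x>0$. We write $x=\sum_{j=1}^m \epsilon_j r_j$ with $r_j\in R$ and $\epsilon_j=\pm1$, allowing repetitions. Iterating \Cref{prop: plog(Q)}.\ref{item: logsuma} along the partial sums gives
\[
\plog(x)\succeq \min_j \plog(r_j)\succeq \lgcd(R).
\]
The iteration needs care only when some intermediate partial sum vanishes. Such a sum can simply be dropped, since the remaining terms still sum to $x$ and their minimum is still $\succeq\lgcd(R)$. Alternatively, one can argue prime by prime: by the ultrametric inequality on the $p$-valuations, $\plog_p(x)\ge \min_j \plog_p(r_j)$ whenever $x\neq 0$.

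For ($\Leftarrow$), suppose $\plog(x)\succeq\lgcd(R)$. By \Cref{lem: R iff R'} there is a finite $R'\subseteq R$ with $\plog(x)\succeq\lgcd(R')$, and since $\gen{R'}\subseteq\gen{R}$ it suffices to treat $R'$. Let $d\in\NN^+$ be a common denominator of $R'\cup\{x\}$. Then $dR'\subseteq\NN^+$ and $dx\in\NN^+$. By \Cref{prop: plog(Q)}.\ref{item: plog(rs) = plog(r) + plog(s)}, multiplication by $d$ shifts every logarithm by $\plog(d)$, so
\[
\plog(dx)\succeq \min\plog(dR')=\lgcd(R')+\plog(d).
\]
Now \Cref{prop: Bezout N} gives $dx\in\gen{dR'}$, that is, $dx=\sum_j n_j d r_j$ with $n_j\in\ZZ$. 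Dividing by $d$ gives $x\in\gen{R'}$.

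For the equivalent reformulation, note that $\gen{R}$ is symmetric and contains $0$. Moreover, $x\in\QQ^+$ ranges exactly over $\PP^{\,\s{e}}$ for $\s{e}\in\Seq_0(\ZZ)$, by the bijection of \Cref{prop: plog(Q)}. Hence the positive elements of $\gen{R}$ are exactly $\PP^{\,\fSucc\lgcd(R)}$, and therefore $\gen{R}=\pm\PP^{\,\fSucc\lgcd(R)}\cup\{0\}$.

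Since \Cref{lem: R iff R'} already carries the real content, the main point requiring care is the ($\Leftarrow$) step: checking that scaling by $d$ commutes with taking the coordinatewise minimum. This holds because the minimum is over the finite set $R'$, so all entries are integers and adding $\plog(d)$ commutes with $\min$.
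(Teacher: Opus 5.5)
Your proof is correct and follows essentially the paper's route: reduce to a finite $R'\subseteq R$ via \Cref{lem: R iff R'}, clear denominators, and apply \Cref{prop: Bezout N}. The two small deviations are harmless. You prove ($\Rightarrow$) directly from the ultrametric inequality rather than running it through \Cref{prop: Bezout N}. You also include $x$ in the common denominator so that $dx\in\NN^+$ is automatic, which makes explicit a point the paper's chain of equivalences leaves implicit.
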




\begin{proof}
For any finite subset $R'=\set{r_1,\ldots,r_n} \subseteq \QQ^{+}$, we write $r_i=a_i/b_i$ in reduced form, and we denote by $B=\lcm(b_1,\dots,b_n)$, and by $A_i =Br_i =a_i \,\frac{B}{b_i}\in \NN^+$, $i=1,\ldots ,n$. Then:
\begin{align*}
    x \in \gen{R}
    &\,\Leftrightarrow\,
    x \in \gen{R'} 
    \text{, for some finite } R'=\{r_1,\ldots, r_n \}\subseteq R\\
    &\,\Leftrightarrow\,
    B x \in \gen{Br_1,\ldots ,Br_n} \text{ (where $ Br_i\in \NN^+$)} 
    \\
    &\,\Leftrightarrow\,
    \plog(B x) \succeq \inf(\set{\plog(Br_i) \st i\in[n]}) \\
    &\,\Leftrightarrow\,
    \plog(B) + \plog(x) \succeq \inf(\set{\plog(B) + \plog(r_i) \st i \in [n] }) \\
    &\,\Leftrightarrow\,
    \plog(B) + \plog(x) \succeq \plog(B) + \inf(\plog(R')) \\
    &\,\Leftrightarrow\,
    \plog(x) \succeq \lgcd(R') 
    \,\text{, where $R' \subseteq R$ is finite}\\
    &\,\Leftrightarrow\,
    \plog(x) \succeq \lgcd(R) \, ,
\end{align*}
where we have used~\eqref{eq: Bezout} in the third equivalence, and~\Cref{lem: R iff R'} in the last one. This proves~\eqref{eq: Bezout Q}. Finally, the equality $\gen{R} =\pm \PP^{\,\fSucc \lgcd(R)} \cup \{0\}$ is just the reformulation of~\eqref{eq: Bezout Q} via the bijection $\plog\colon \QQ^+\to \Seq_0(\ZZ)$. 
\end{proof}


\begin{cor}\label{cor: lgcd R = lgcd H}
Let $R,S\subseteq\QQ$ be nonempty and different from $\{0\}$. Then, \[
\lgcd(R)=\lgcd(S)
\ \Leftrightarrow \ 
\gen{R}=\gen{S} \,.  \tag*{\qedsymbol}
\]
\end{cor}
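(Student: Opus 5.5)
The statement is a direct consequence of \Cref{prop: Bezout Q}. The plan is to show that the subgroup $\gen{R}$ determines $\lgcd(R)$ and, conversely, that $\lgcd(R)$ determines $\gen{R}$. Throughout, write $R_0=\{|r| \st r\in R\setmin\{0\}\}\subseteq\QQ^+$, which is nonempty, and note that $\lgcd(R)=\lgcd(R_0)$ by definition. Since $\gen{R}=\gen{R_0}$ (negation and $0$ change nothing), we may assume from the start that $R,S\subseteq\QQ^+$ are nonempty.

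For the forward implication, suppose $\lgcd(R)=\lgcd(S)$. By \Cref{prop: Bezout Q}, $\gen{R}=\pm\PP^{\,\fSucc\lgcd(R)}\cup\{0\}$, and likewise for $S$. The right-hand sides coincide, so $\gen{R}=\gen{S}$.

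For the converse, the key intermediate claim is that $\lgcd(R)=\lgcd(\gen{R}^+)$ for every nonempty $R\subseteq\QQ^+$. Once this is proved, $\gen{R}=\gen{S}$ immediately gives $\lgcd(R)=\lgcd(\gen{R}^+)=\lgcd(\gen{S}^+)=\lgcd(S)$. To prove the claim, first use $R\subseteq\gen{R}^+$ to get $\lgcd(\gen{R}^+)\preceq\lgcd(R)$, since an infimum over a larger set is smaller. For the reverse inequality, take any $x\in\gen{R}^+$. By \Cref{prop: Bezout Q}, $\plog(x)\succeq\lgcd(R)$, so $\lgcd(R)$ is a coordinatewise lower bound of $\plog(\gen{R}^+)$. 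It is therefore below the coordinatewise infimum, that is, $\lgcd(R)\preceq\lgcd(\gen{R}^+)$.

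The only delicate point is the bookkeeping with $\Zb$-valued infima. Infima are taken coordinatewise in $\Zb$, where they always exist, and it must be checked that "lower bound implies below the infimum" holds coordinatewise, including when entries equal $-\infty$. This holds trivially in the totally ordered set $\Zb$, so I expect no real obstacle: the argument is essentially a rephrasing of \Cref{prop: Bezout Q}.
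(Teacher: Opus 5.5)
Your proof is correct and follows the route the paper intends. The paper states this corollary without proof as an immediate consequence of Proposition~\ref{prop: Bezout Q}: the forward direction is read off from $\gen{R}=\pm\PP^{\,\fSucc\lgcd(R)}\cup\{0\}$, and the converse amounts to $\lgcd(R)=\lgcd(\gen{R})$, which you establish cleanly from $R\subseteq\gen{R}^+$ together with the membership criterion.
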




\section{The lattice of subgroups of $(\QQ, +)$}

We already have the ingredients to extend the logarithmic correspondence given by $\plog$ to the level of subgroups of~$\QQ$, and obtain an explicit classification of this family. A similar goal was already achieved in~\cite{beaumontCharacterizationSubgroupsAdditive1951a}. It will also transfer the order structure and the operations of infimum, supremum, and sum on sequences to natural operations on subgroups. Moreover, it will help to deal with further questions concerning isomorphism classes and intersections.

Let us denote by $\Sgp(\QQ,+)$ the family of subgroups of the additive group of rationals. Write $\Sgp^*(\QQ,+)$ for the set of nontrivial ones.

\begin{lem}\label{lem: lgcd eventually negative}
For every $r\in R\subseteq \QQ^+$, we have $\supp_{>0}(\lgcd(R)) \subseteq \supp(\plog(r))$. In particular, $\supp_{>0}(\lgcd(R))$ is always finite; equivalently, $\lgcd(R) \in \Seq_{\leq0}(\Zb)$.
\end{lem}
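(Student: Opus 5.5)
The plan is to compare $\lgcd(R)$ coordinatewise with the single sequence $\plog(r)$, using only the definition of $\lgcd$ as an infimum. Fix $r\in R\subseteq\QQ^+$ and a prime $p\in\supp_{>0}(\lgcd(R))$, so that $\lgcd(R)(p)>0$.

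Since $r\in R$, we have $\plog_p(r)\in\plog_p(R)$. The infimum of a set is a lower bound for every element of that set, so
\[
\lgcd(R)(p)=\inf\plog_p(R)\leq \plog_p(r).
\]
Combined with $\lgcd(R)(p)>0$, this gives $\plog_p(r)>0$, and in particular $\plog_p(r)\neq 0$. Hence $p\in\supp(\plog(r))$, which proves the inclusion $\supp_{>0}(\lgcd(R))\subseteq\supp(\plog(r))$.

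For the consequences, first note that $R\neq\varnothing$ (the statement quantifies over some $r\in R$), so such an $r$ exists. By \Cref{prop: plog(Q)}, $\plog(r)\in\Seq_0(\ZZ)$ has finite support. The inclusion just proved therefore shows that $\supp_{>0}(\lgcd(R))$ is finite. This says exactly that all but finitely many entries of $\lgcd(R)$ lie in $\Zb_{\leq 0}$, i.e.\ that $\lgcd(R)\in\Seq_{\leq0}(\Zb)$. The converse direction of the stated equivalence is immediate from the definition of $\Seq_{\leq 0}$.

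There is no real obstacle. The only point needing care is the order convention: the infimum is computed in $\Zb$, where $-\infty$ counts as nonpositive, so a coordinate equal to $-\infty$ never enters the positive support.
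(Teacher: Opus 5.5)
Your proposal is correct and follows essentially the same argument as the paper: you use $\inf\plog_p(R)\leq\plog_p(r)$ to pass from $\lgcd_p(R)>0$ to $\plog_p(r)>0$, and then the finite support of $\plog(r)$ gives finiteness. Your remark that a $-\infty$ entry never enters the positive support is a harmless addition.
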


\begin{proof}
Note that, for every $p\in \PP$,
 \begin{align*}
p\in \supp_{>0}(\lgcd(R))
&\,\Leftrightarrow\,
\lgcd_p(R) > 0\\
&\,\Leftrightarrow\,
\inf(\plog_p(R)) >0 \\
&\,\Rightarrow\,
\plog_p(r) > 0 \\
&\,\Rightarrow\,
p\in \supp(\plog(r)) \, .
 \end{align*}
Since $\supp(\plog(r))$ is finite for every $r \in \QQ^+$, the  final claim follows immediately.
\end{proof}

\begin{thm}\label{thm: lgcd structure}
Let $(\QQ,+)$ be the additive group of rational numbers, and let $\Seq_{\leq0}(\Zb)$ denote the set of eventually nonpositive sequences in $\Zb = \ZZ \cup \set{-\infty}$. Then, the nontrivial subgroups of $(\QQ,+)$ are in bijection with $\Seq_{\leq0}(\Zb)$ via
 \begin{equation}\label{eq: bij lgcd}
\begin{array}{rcl} \Sgp^*(\QQ,+) & \rightarrow & \Seq_{\leq0}(\Zb) \\ H & \mapsto &   \lgcd(H) \\ H_\s{s} & \mapsfrom & \s{s} \,, \end{array}
 \end{equation}
where $H_\s{s}=\pm\PP^{\,\fSucc \s{s}} \cup \set{0}=\pm \set{\PP^{\,\s{e}}:\s{e}\in\Seq_0(\ZZ) \text{ and } \s{s}\preceq \s{e}} \cup \set{0}$.
Moreover, the map~\eqref{eq: bij lgcd} is an anti-isomorphism of lattices; that is, for every $H,K\in \Sgp^*(\QQ,+)$,
 \begin{equation}\label{eq: subgroup inclusion}
H\leqslant K \;\Leftrightarrow\; \lgcd(H) \succeq \lgcd(K).    
 \end{equation}
\end{thm}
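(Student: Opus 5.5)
We verify three things in turn: the map lands in $\Seq_{\leq0}(\Zb)$, the assignment $\s{s}\mapsto H_\s{s}$ is a two-sided inverse, and the order statement~\eqref{eq: subgroup inclusion} holds.

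\textbf{Well-definedness.} Let $H\neq\{0\}$. By definition, $\lgcd(H)$ is $\lgcd$ of the nonempty set $H^+$, which is nonempty because $H$ is closed under negatives. \Cref{lem: lgcd eventually negative} then shows that $\lgcd(H)\in\Seq_{\leq0}(\Zb)$.

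\textbf{The composite $H\mapsto\lgcd(H)\mapsto H_{\lgcd(H)}$.} Apply \Cref{prop: Bezout Q} with $R=H^+$. Since $\gen{H^+}=H$, it gives $H=\pm\PP^{\,\fSucc\lgcd(H)}\cup\{0\}=H_{\lgcd(H)}$. So the map is injective, with left inverse $\s{s}\mapsto H_\s{s}$.

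\textbf{The composite $\s{s}\mapsto H_\s{s}\mapsto\lgcd(H_\s{s})$.} This is the main step. Given $\s{s}\in\Seq_{\leq0}(\Zb)$, we must check two things.

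First, $H_\s{s}$ is a nontrivial subgroup. Nontriviality needs some $\s{e}\in\Seq_0(\ZZ)$ with $\s{s}\preceq\s{e}$. Take $\s{e}(p)=\max(\s{s}(p),0)$, read as $0$ when $\s{s}(p)=-\infty$. This $\s{e}$ is finitely supported exactly because $\supp_{>0}(\s{s})$ is finite. Closure under addition follows from \Cref{prop: plog(Q)}(\ref{item: logsuma}): if $\plog(x),\plog(y)\succeq\s{s}$, then $\plog(|x\pm y|)\succeq\min\{\plog(x),\plog(y)\}\succeq\s{s}$. Closure under negation is built into the definition.

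Second, $\lgcd(H_\s{s})=\inf\fSucc\s{s}=\s{s}$. Every element of $\fSucc\s{s}$ is $\succeq\s{s}$, so $\inf\fSucc\s{s}\succeq\s{s}$. For the reverse inequality, fix $p$ and use test sequences. Let $\s{e}_0$ be the sequence just built, and for $n\in\ZZ$ let $\s{e}_n$ be $\s{e}_0$ with its $p$-coordinate replaced by $n$. If $\s{s}(p)\in\ZZ$, choose $n=\s{s}(p)$; then $\s{e}_n\in\fSucc\s{s}$, so the infimum at $p$ is at most $\s{s}(p)$. If $\s{s}(p)=-\infty$, every $n\in\ZZ$ gives $\s{e}_n\in\fSucc\s{s}$, so the infimum at $p$ is $-\infty$. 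Hence $\lgcd(H_\s{s})=\s{s}$, which proves surjectivity and completes the bijection.

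\textbf{Order.} Suppose $H\leqslant K$. Then $H^+\subseteq K^+$, and an infimum over a larger set is smaller, so $\lgcd(K)\preceq\lgcd(H)$. Conversely, suppose $\lgcd(H)\succeq\lgcd(K)$. Then $\fSucc\lgcd(H)\subseteq\fSucc\lgcd(K)$, so $H=H_{\lgcd(H)}\subseteq H_{\lgcd(K)}=K$. A bijection between posets that reverses the order in both directions is an anti-isomorphism of posets, hence also of lattices, since meets and joins are defined by the order.

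The only delicate point is the surjectivity computation. It handles the $-\infty$ coordinates and uses eventual nonpositivity to guarantee that $\fSucc\s{s}$ is nonempty.
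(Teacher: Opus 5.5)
Your proposal is correct and follows essentially the same route as the paper. Both arguments get well-definedness from \Cref{lem: lgcd eventually negative}, show $H_{\s{s}}$ is a subgroup via \Cref{prop: plog(Q)} with the same nontriviality witness $\max(\s{s},0)$, obtain $H_{\lgcd(H)}=H$ from \Cref{prop: Bezout Q}, and derive the order statement through successor sets. Your test-sequence argument for $\inf(\fSucc \s{s})=\s{s}$ just spells out a step the paper states without detail.
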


\begin{proof}
By~\Cref{lem: lgcd eventually negative}, the direct map $H\mapsto \lgcd(H)$ in \eqref{eq: bij lgcd} is well defined. For the converse, let us see that $H_{\s{s}}$ is a nontrivial subgroup of $(\QQ,+)$. 

By construction, $0\in H_{\s{s}}$ and $H_{\s{s}}$ is closed under change of sign. Let us see that if $x,y\in H_{\s{s}}$ then $x+y\in H_{\s{s}}$. If~$0\in \set{x,y}$ there is nothing to prove. Otherwise, write $x=\varepsilon x_0$ and $y=\delta y_0$ with $\varepsilon, \delta\in\{\pm 1\}$ and $x_0,y_0\in H_{\s{s}}\cap\QQ^+$. If $\varepsilon =\delta$, then $x+y=\varepsilon (x_0+y_0)$. If $\varepsilon \neq \delta$, then $x+y=\varepsilon (x_0-y_0)$. So, it is enough to assume $x,y\in H_{\s{s}}\cap\QQ^+$ and prove that $|x\pm y| \in H_{\s{s}}$.

Indeed, under these assumptions, if $|x\pm y|=0$ the claim is obvious; otherwise, we have that $\plog(x),\plog(y)\in\Seq_0(\ZZ)$ with both $\plog(x), \plog(y) \succeq \s{s}$. But in this case we have that $x\pm y\in \gen{x,y}$ and, by~\Cref{prop: plog(Q)}.\ref{item: logsuma},
 $$
\plog(|x \pm y|) \succeq 
\min(\plog(x),\plog(y)) \succeq \s{s} \, .
 $$
This shows that $H_{\s{s}}$ is a subgroup of $(\QQ,+)$. Moreover, $H_{\s{s}}\neq \{0\}$ because it always contains a big enough integer: since $\s{s}\in \Seq_{\leq0}(\Zb)$, $\supp_{>0} (\s{s})$ is finite and $0\neq \prod_{p\in \supp_{>0}(\s{s})} p^{\s{s}(p)}\in \ZZ\cap H_{\s{s}}$.

Let us now see that the two maps from~\eqref{eq: bij lgcd} are inverse of each other. Indeed, for every $\s{s} \in \Seq_{\leq0}(\Zb)$,
 $$
\lgcd(H_{\s{s}}) \,=\lgcd(H_{\s{s}}\cap\QQ^+) \,=\, \lgcd(\PP^{\,\fSucc  \s{s}}) \,=\, \inf(\plog(\PP^{\,\fSucc  \s{s}})) \,=\, \inf(\fSucc  \s{s}) \,=\, \s{s}.
 $$
On the other hand, for $H\in \Sgp^*(\QQ,+)$, taking $R=H\cap \QQ^+$ in~\Cref{prop: Bezout Q}, we have that $\PP^{\fSucc \lgcd(H)}=H\cap \QQ^+ \,,$ and hence
 $$
H_{\lgcd(H)} \,=\, \pm\PP^{\,\fSucc  \lgcd(H)} \cup \set{0} \,=\, \pm (H \cap \QQ^+) \cup \set{0} \,=\, H.
 $$

Finally, given two nontrivial subgroups of $(\QQ, +)$, say $H=H_{\lgcd(H)}$ and $K=H_{\lgcd(K)}$, we can use again~\Cref{prop: Bezout Q} and get   
 $$
H\leqslant K \;\Leftrightarrow\; H_{\lgcd(H)}\leqslant H_{\lgcd(K)} \;\Leftrightarrow\; \lgcd(K)\preceq \lgcd(H).
 $$
This completes the proof.
\end{proof}

Note that the bijection $\lgcd\colon \Sgp^*(\QQ,+) \rightarrow \Seq_{\leq0}(\Zb)$ may be viewed as a natural extension of the bijection $\plog \colon \QQ^+ \to \Seq_0(\ZZ)$, at the price of identifying the finitely generated (\ie cyclic) subgroups $H$ with their positive generators $\PP^{\lgcd(H)} \in \QQ^+$. 
Furthermore, this correspondence provides a transparent way to transport the notions of support and distance from logarithmic sequences to subgroups of $(\QQ,+)$, a terminology that will be convenient throughout the rest of the paper.

\begin{defn}
For every nontrivial subgroup $H\leqslant (\QQ,+)$, every rational $r\in \QQ^+$, and every subset $B \subseteq \Zb$, we define the \defin{$B$-support} of $H$ and $r$ via their associated logarithmic sequences. More precisely,
 \begin{align*}
\supp_B(H) &\,=\, \supp_B(\lgcd(H)) \\ \supp_B(r) &\,=\, \supp_B(\plog(r)) \,.
 \end{align*}  
Note that if $n\in \NN^+$, then $\supp(n)
$ is exactly the set of primes in the prime factorization of $n$.

Similarly, the distance between two nontrivial subgroups $H,K \leqslant (\QQ,+)$ is defined to be the distance between their respective $\lgcd$'s:
\[
\dist(H,K) \,=\, \dist(\lgcd(H),\lgcd(K)) \,.
\]
\end{defn}

Across bijection~\eqref{eq: bij lgcd} one can naturally understand the index of an extension of subgroups $H\leqslant K\leqslant \QQ$, and the elementary operations among subgroups of $\QQ$. 

\begin{prop}
Let $\set{0} \neq H\leqslant K\leqslant \QQ$. Then, the index of $H$ in $K$ is\footnote{With the natural conventions that $(-\infty)-(-\infty)=0$, and that the product of infinite primes, including $p^{\infty}$, equals $\infty$.} 
 \begin{equation}\label{eq: index}
|K:H|\, =\, \PP^{\,\lgcd(H)-\lgcd(K)}.
 \end{equation}
In particular, 
\[
|K:H|<\infty
\ \Leftrightarrow\ 
\lgcd(H)-\lgcd(K)\in \Seq_0(\NN)
\ \Leftrightarrow\ 
\dist(H,K) < \infty \,.
\]
\end{prop}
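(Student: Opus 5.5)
Write $\s{h}=\lgcd(H)$ and $\s{k}=\lgcd(K)$. By \eqref{eq: subgroup inclusion} we have $\s{k}\preceq\s{h}$, so $\s{h}-\s{k}$ has entries in $\NN\cup\{+\infty\}$, with the conventions $(-\infty)-(-\infty)=0$ and $\s{h}(p)-(-\infty)=+\infty$ when $\s{h}(p)\in\ZZ$. The plan is to reduce the computation of $|K:H|$ to one prime at a time, and then to handle finitely and infinitely many primes separately.

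The basic step is a single-prime modification. Fix $p$ and $n\in\NN$, and let $\s{s}'=\s{s}+n\mathbf{1}_p$ with $\s{s}(p)\in\ZZ$, so $H_{\s{s}'}\leqslant H_{\s{s}}$. I claim $|H_{\s{s}}:H_{\s{s}'}|=p^n$. By induction it suffices to treat $n=1$. Consider the map $H_{\s{s}}\to \ZZ/p\ZZ$ given by $x\mapsto$ the class of $x/p^{\s{s}(p)}$ in the localization $\ZZ_{(p)}/p\ZZ_{(p)}\cong\ZZ/p\ZZ$. This is well defined because every $x\in H_{\s{s}}$ has $\plog_p(x)\ge \s{s}(p)$. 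It is additive, and its kernel is exactly $\{x:\plog_p(x)\ge\s{s}(p)+1\}\cap H_{\s{s}}=H_{\s{s}'}$. It is surjective: take $x=\PP^{\s{e}}$ with $\s{e}\succeq\s{s}$ of finite type and $\s{e}(p)=\s{s}(p)$; such $\s{e}$ exists by replacing $-\infty$ entries with large-ish finite values, positive entries with themselves, and otherwise $0$. Then the image of $x$ is a unit, hence a generator. This gives index $p$.

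Next comes the finite-distance case. If $\s{h}-\s{k}\in\Seq_0(\NN)$, move from $\s{k}$ to $\s{h}$ through finitely many single-prime steps. Indices multiply along the chain $H\leqslant\cdots\leqslant K$, so $|K:H|=\prod_p p^{\s{h}(p)-\s{k}(p)}=\PP^{\s{h}-\s{k}}$.

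Finally, the infinite case. If $\s{h}-\s{k}\notin\Seq_0(\NN)$, either infinitely many primes have $\s{h}(p)>\s{k}(p)$, or some $p$ has $\s{k}(p)=-\infty<\s{h}(p)$. In the first case, for each $N$ pick $N$ such primes and raise $\s{k}$ to $\s{h}$ at those coordinates only, giving $\s{t}$ with $\s{k}\preceq\s{t}\preceq\s{h}$. This yields $H\leqslant H_{\s{t}}\leqslant K$ with $|K:H_{\s{t}}|\ge 2^N$ by the finite case. In the second case, set $\s{t}=\s{k}$ except $\s{t}(p)=m$ for a large integer $m$ with $m\le\s{h}(p)$ (allowed since $\s{h}(p)>-\infty$). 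Then $H_{\s{t}}\leqslant K$ with $|K:H_{\s{t}}|\ge |H_{\s{t}-N\mathbf{1}_p}:H_{\s{t}}|=p^N$ for all $N$, using the single-prime step. In both cases $|K:H|=\infty$, matching the convention.

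The equivalences in the "in particular" are then immediate. $\s{h}-\s{k}\in\Seq_0(\NN)$ means finitely many finite differences and no $-\infty$ mismatch, which is exactly $\dist(\s{h},\s{k})<\infty$, since $\dist=\sum_p\dist(\s{h}(p),\s{k}(p))$. I expect the main obstacle to be the surjectivity and well-definedness in the single-prime step when $\s{s}$ has $-\infty$ entries and infinite support. The key point is that the reduction map only involves the $p$-adic valuation, so the other coordinates merely need some finite-type successor agreeing at $p$.
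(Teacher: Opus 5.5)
Your proposal is correct and follows essentially the same route as the paper: an index-$p$ single-coordinate step via reduction modulo $p$ (the paper phrases this as $|H:pH|=p$, using $pH_{\s{s}}=H_{\s{s}+\mathbf{1}_p}$), multiplicativity of indices along a finite chain, and arbitrarily long chains between $\s{k}$ and $\s{h}$ in the infinite case. Two points need tightening. First, for the surjectivity witness take $\s{e}(q)=\max\{\s{s}(q),0\}$ for $q\neq p$, so that every $-\infty$ entry becomes $0$ and $\s{e}$ keeps finite support. Second, in your first infinite subcase choose only primes where $\s{k}(p)$ is finite; if some prime has $\s{k}(p)=-\infty<\s{h}(p)$, your second subcase already applies, and otherwise the finite case genuinely applies to $|K:H_{\s{t}}|$.
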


\begin{proof}
To prove the index formula \eqref{eq: index} we first claim that, for every prime $p\in \PP$, and every nontrivial subgroup $H\leqslant \QQ$, we have $pH\leqslant H$ with  
 \begin{equation} \label{eq: index H:pH}
|H:pH|= \bigg\{\! \begin{array}{ll} p & \text{if } \lgcd_p(H)>-\infty, \\ 1 & \text{if } \lgcd_p(H)=-\infty. \end{array}
 \end{equation}
Indeed, if $\lgcd_p(H)=-\infty$, then $pH=H$, and the claim is trivial. Otherwise, for any fixed $h\in H$ such that $\plog_p(h)=\lgcd_p(H) \in \ZZ$, the map
 $$
\begin{array}{rcl} \theta\colon  H & \to & \ZZ/p\ZZ \\ x &\mapsto &x/h \,(\mathrm{mod}\ p) \end{array}
 $$
is a well-defined epimorphism with kernel $pH$. Indeed, for every $x\in H \leqslant \QQ$, $\plog_{p}(x/h)=\plog_{p}(x)-\plog_{p}(h) \geq 0$. Therefore, $x/h =m/n$, with $m,n\in \NN$ and $n$ invertible modulo $p$; hence, $x/h \,(\mathrm{mod}\ p)=mn^{-1} \,(\mathrm{mod}\ p)$ is well defined. Finally, since $h\in H$ and $\theta(h)=1$, $\theta$ is an epimorphism and
 \begin{align*}
x\in\ker(\theta)
&\,\Leftrightarrow\,
x/h \equiv 0 \,(\mathrm{mod}\ p)\\
&\,\Leftrightarrow\,
\plog_p(x/h)\geq 1\\
&\,\Leftrightarrow\,
\plog_p(x)\geq \plog_p(h) + 1\\
&\,\Leftrightarrow\,
x \in pH.   
 \end{align*}
We conclude that $\ker(\theta)=pH$ and so, $|H:pH|=p$, as claimed in~\eqref{eq: index H:pH}.

Now let $H\leqslant K$, and write $\s{h}=\lgcd(H)$ and $\s{k}=\lgcd(K)$, both sequences from $\Seq_{\leq0}(\Zb)$. By~\eqref{eq: subgroup inclusion}, we have that $\s{k}\preceq \s{h}$; hence, $\s{d}=\s{h}-\s{k}$ (with the conventions done) is a sequence with entries in $\NN\cup\{\infty\}$. We distinguish two cases, depending on whether $\s{d} \in \Seq_0(\NN)$ or not. 

Assume $\s{d} \in \Seq_0(\NN)$, \ie $\s{d}=\sum_{p\in P} d_p \mathbf{1}_p$; see~\eqref{eq: indicator}. Then, $H=H_\s{h}=H_{\s{k}+\s{d}}= H_{\s{k}+\mathord{\scalebox{0.7} {$\sum$}}_{p\in P} d_p \mathbf{1}_p}$. Now, starting from $\s{k}$ and increasing by 1 each of its coordinates, one at a time, until reaching $\s{h}$, we obtain a list of sequences $\s{k}=\s{s_0}\preceq \s{s_1}\preceq \cdots \preceq \s{s_n}=\s{h}$ (from $\Seq_{\leq0}(\Zb)$, and with $n=\sum_{p\in \PP} d_p$), corresponding to the strict inclusions of subgroups $H=H_{\s{h}}=H_{\s{s_n}}<H_{\s{s_{n-1}}}<\cdots <H_{\s{s_1}}<H_{\s{s_0}}=H_{\s{k}}=K$, each of the form $pL<L$ for some prime $p$ and with strict inclusion. By~\eqref{eq: index H:pH}, and the multiplicativity of indices,
 \[
|K:H|=\prod_{i=n-1}^0 |H_{\s{s_i}}:H_{\s{s_{i+1}}}|=\prod_{p\in P} p^{d_p}=\PP^{\,\s{h}-\s{k}}=\PP^{\,\lgcd(H)-\lgcd(K)}.
 \]

Assume now $\s{d} = \s{h} - \s{k} \notin \Seq_0(\NN)$. This means that, either infinitely many coordinates satisfy $h_p>k_p$, or there exists $p\in\PP$ such that $k_p=-\infty$ and $h_p\in\ZZ$. In either case, for every $n\in\NN$ we can choose a finite sequence (of sequences) $(\s{s_i})_{i=0}^n$ such that $\s{k} \preceq \s{s_0}\preceq \s{s_1}\preceq \cdots \preceq \s{s_n}\preceq \s{h}$, where each $\s{s_{j+1}}$ is obtained from $\s{s_j}$ by increasing a single coordinate by~$1$. Therefore, by~\eqref{eq: index H:pH}, $|H_{\s{s}_j}:H_{\s{s}_{j+1}}|\geq 2$ for every $j=0,\ldots ,n-1$ and so, $|K:H|\geq 2^n$. Since this is true for every $n\in \NN$, we conclude that $|K:H|=\infty =\PP^{\,\lgcd(H)-\lgcd(K)}$ (under the conventions done). This proves~\eqref{eq: index}. 

By the stated convention for the product $\PP^{\,\lgcd(H)-\lgcd(K)}$, it follows that $|K:H|<\infty \ \Leftrightarrow\ \lgcd(H)-\lgcd(K)\in\Seq_0(\NN)$. Moreover, since $H\leqslant K$ implies
$\lgcd(K)\preceq\lgcd(H)$, the definition of distance between
subgroups yields the final assertion.
\end{proof}

\begin{prop}\label{prop: =}
For every $r\in\QQ^+$ and every~$H,K \in \Sgp^*(\QQ,+)$, we have
 \begin{align}
\lgcd(rH) &=
\plog(r)+\lgcd(H), \label{eq: lgcd scalar} \\
\lgcd(H+K) &= \min\bigl(\lgcd(H),\,\lgcd(K)\bigr), \label{eq: lgcd sum} \\
\lgcd(H\cap K) &= \max\bigl(\lgcd(H),\,\lgcd(K)\bigr), \label{eq: lgcd cap} \\ 
\lgcd(HK) &= \lgcd(H)+\lgcd(K), \label{eq: lgcd prod}
 \end{align}
where $HK=\{xy \mid x\in H,\ y\in K\}$ is, again, a subgroup of $(\QQ, +)$.
\end{prop}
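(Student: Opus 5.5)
All four identities will be reduced to \Cref{prop: Bezout Q} and \Cref{cor: lgcd R = lgcd H}. These say that a nontrivial subgroup is determined by the $\lgcd$ of any generating set, and that $H\cap\QQ^+=\PP^{\,\fSucc\lgcd(H)}$. The strategy is to write each subgroup on the left as $\gen{R}$ for a convenient $R\subseteq\QQ^+$ and compute $\lgcd(R)=\inf\plog(R)$ coordinatewise. The exception is the intersection, which is not generated by an obvious set; there we argue directly from membership.

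For \eqref{eq: lgcd scalar}, note that $rH\cap\QQ^+=r(H\cap\QQ^+)$ and that $\plog(rx)=\plog(r)+\plog(x)$ by \Cref{prop: plog(Q)}. Taking the infimum coordinatewise commutes with adding the fixed finite integer $\plog_p(r)$, and this remains valid when the coordinate is $-\infty$. For \eqref{eq: lgcd sum}, $H+K=\gen{(H\cup K)\cap\QQ^+}$. Since $\plog((H\cup K)^+)=\plog(H^+)\cup\plog(K^+)$, its coordinatewise infimum is $\min(\lgcd(H),\lgcd(K))$, and \Cref{cor: lgcd R = lgcd H} then gives \eqref{eq: lgcd sum}.

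For \eqref{eq: lgcd cap}, I first note that $H\cap K$ is nontrivial: if $h\in H^+$ and $k\in K^+$, then writing $h/k=a/b$ gives $bh=ak\neq0$ in both subgroups. So $\lgcd(H\cap K)$ makes sense. Put $\s{m}=\max(\lgcd(H),\lgcd(K))$, which lies in $\Seq_{\leq0}(\Zb)$. For $x\in\QQ^+$, \Cref{prop: Bezout Q} gives
\[
x\in H\cap K \Leftrightarrow \plog(x)\succeq\lgcd(H)\text{ and }\plog(x)\succeq\lgcd(K)\Leftrightarrow \plog(x)\succeq\s{m}.
\]
Hence $H\cap K=H_{\s{m}}$, and by \Cref{thm: lgcd structure} we get $\lgcd(H\cap K)=\s{m}$. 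Equivalently, one can simply observe that $\max$ is the lattice join and the bijection is an anti-isomorphism; the explicit argument avoids having to check that joins exist.

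For \eqref{eq: lgcd prod} I expect the real work. First, $HK$ must be shown to be a subgroup. It contains $0$ and is closed under negation, but closure under addition is not obvious. My plan is to prove $HK=H_{\s{h}+\s{k}}$ directly, writing $\s{h}=\lgcd(H)$ and $\s{k}=\lgcd(K)$. Note that $\s{h}+\s{k}\in\Seq_{\leq0}(\Zb)$ because both summands are eventually nonpositive, and $-\infty$ is absorbing. The inclusion $\subseteq$ is immediate: for $x\in H^+$ and $y\in K^+$ we have $\plog(xy)=\plog(x)+\plog(y)\succeq\s{h}+\s{k}$.

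For $\supseteq$, take $\s{e}\in\Seq_0(\ZZ)$ with $\s{e}\succeq\s{h}+\s{k}$; I must split $\s{e}=\s{a}+\s{b}$ with $\s{a},\s{b}\in\Seq_0(\ZZ)$, $\s{a}\succeq\s{h}$ and $\s{b}\succeq\s{k}$. I would choose coordinatewise, handling three kinds of primes $p$.
\begin{itemize}
\item If $\s{h}(p)=-\infty$, put $\s{b}(p)=\max(\s{k}(p),0)$ and $\s{a}(p)=\s{e}(p)-\s{b}(p)$.
\item If $\s{h}(p)>-\infty$ but $\s{k}(p)=-\infty$, argue symmetrically, putting $\s{a}(p)=\max(\s{h}(p),0)$ and $\s{b}(p)=\s{e}(p)-\s{a}(p)$.
\item If both are finite, put $\s{a}(p)=\max(\s{h}(p),\min(0,\s{e}(p)-\s{k}(p)))$ and $\s{b}(p)=\s{e}(p)-\s{a}(p)$. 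Then $\s{b}(p)\geq\s{k}(p)$ follows from $\s{e}(p)\geq\s{h}(p)+\s{k}(p)$.
\end{itemize}
The delicate point is finite support. At all but finitely many primes we have $\s{e}(p)=0$, $\s{h}(p)\leq0$ and $\s{k}(p)\leq0$, and at those primes each rule returns $\s{a}(p)=\s{b}(p)=0$. For instance, in the third case $\min(0,-\s{k}(p))=0\geq\s{h}(p)$. So $\s{a}$ and $\s{b}$ have finite support. Then $\PP^{\s{a}}\in H$ and $\PP^{\s{b}}\in K$ multiply to $\PP^{\s{e}}$, and signs are handled by $\pm$. Thus $HK=H_{\s{h}+\s{k}}$ is a subgroup, and \Cref{thm: lgcd structure} gives \eqref{eq: lgcd prod}.
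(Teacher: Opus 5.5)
Your proof is correct and essentially follows the paper. The scalar and product identities are handled the same way, since the paper also proves $HK=H_{\s{h}+\s{k}}$. For the sum you argue via a generating set and \Cref{cor: lgcd R = lgcd H}, and for the intersection via direct membership and \Cref{prop: Bezout Q}, whereas the paper uses a sandwich argument through the anti-isomorphism \eqref{eq: subgroup inclusion} in both cases.

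Your explicit coordinatewise splitting $\s{e}=\s{a}+\s{b}$, including the finite-support check, is a worthwhile addition. The paper simply asserts $\fSucc \s{h}+\fSucc \s{k}=\fSucc (\s{h}+\s{k})$, and your three-case construction is exactly the proof of the nontrivial inclusion $\supseteq$ in that equality.
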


\begin{proof} 
For all the proof, fix two nontrivial subgroups $\{0\}\neq H,K\leqslant \QQ$, and let $\s{h}=\lgcd(H)$ and $\s{k}=\lgcd{K}$; that is, $H=H_{\s{h}}$ and $K=H_{\s{k}}$.

For every $x\in H\setmin\{0\}$, $\plog(rx)=\plog(r)+\plog(x)$. Taking infima, equality~\eqref{eq: lgcd scalar} follows immediately.

Using~\eqref{eq: subgroup inclusion} and since $H,K\leqslant H+K$, we get $\lgcd(H+K)\preceq \s{h}$ and $\lgcd(H+K)\preceq \s{k}$; hence, $\lgcd(H+K)\preceq \min(\s{h},\s{k})$. Conversely, $\min(\s{h},\s{k})\preceq \s{h},\s{k}$ and so, $H=H_{\s{h}}\leqslant H_{\min(\s{h},\s{k})}$ and $K=H_{\s{k}}\leqslant H_{\min(\s{h},\s{k})}$. Therefore $H+K\leqslant H_{\min(\s{h},\s{k})}$ and so, again by~\eqref{eq: subgroup inclusion}, $\min(\s{h},\s{k})=\lgcd(H_{\min(\s{h},\s{k})})\preceq \lgcd(H+K)$. Thus, $\lgcd(H+K)=\min(\s{h},\s{k})=\min\bigl(\lgcd(H),\lgcd(K)\bigr)$, as claimed in \eqref{eq: lgcd sum}.

A completely analogous argument shows~\eqref{eq: lgcd cap}. 

Finally, to see~\eqref{eq: lgcd prod} observe that 
 $$
HK=H_{\s{h}}H_{\s{k}}=\{hk \st h\in H_{\s{h}},\, k\in H_{\s{k}}\}=\pm \PP^{\fSucc \s{h}} \cdot \PP^{\fSucc \s{k}}\cup \{0\}= 
 $$
 $$
=\pm \PP^{\fSucc \s{h}+\fSucc \s{k}}\cup \{0\}=\pm \PP^{\fSucc (\s{h}+\s{k})}\cup \{0\}=H_{\s{h}+\s{k}}.
 $$
This automatically shows~\eqref{eq: lgcd prod} and that $HK$ is, certainly, a subgroup. 
\end{proof}

\begin{rem}
The fact that $HK=\{hk \st h\in H,\, k\in K\}$ is again an additive subgroup of $\QQ$ is specific to this setting; it is not a general group-theoretic phenomenon. An alternative argument to see this, in classical terms, is as follows: take $hk,\, h'k'\in HK$ and consider $hk+h'k'$. By the cyclicity of all finitely generated subgroups of $\QQ$, there exist $h''\in H$ and $k''\in K$ such that $\gen{h,\, h'}=\gen{h''}$ and $\gen{k,\, k'}=\gen{k''}$; in particular, $h=\lambda h''$, $h'=\lambda' h''$, $k=\mu k''$, and $k'=\mu' k''$ for some integers $\lambda,\, \lambda', \mu, \mu'\in \ZZ$. Then, 
 $$
hk+h'k'=\lambda h'' \mu k'' +\lambda' h'' \mu' k''=h''k''(\lambda \mu +\lambda'\mu') =h''\Big((\lambda \mu +\lambda'\mu')k''\Big)\in HK.
 $$
\end{rem}

The identities obtained in~\Cref{prop: =} extend naturally to arbitrary families of subgroups with straightforward inductive arguments.

\begin{cor}
Let $\s{s_1},\ldots,\s{s_k} \in \Seq_{\leq0}(\Zb)$. Then,
 \begin{enumerate} [(i)]
\item $H_{\s{s_1}}+\cdots +H_{\s{s_k}} =H_{\min (\s{s_1},\ldots,\s{s_k})}$, \label{item: sum Hs}
\item $H_{\s{s_1}}\cap \cdots \cap H_{\s{s_k}} =H_{\max (\s{s_1},\ldots,\s{s_k})}$, \label{item: cap Hs}
\item $H_{\s{s_1}}\cdots H_{\s{s_k}}=H_{\s{s_1}+\cdots +\s{s_k}}$. 
\qed \label{item: prod Hs}
 \end{enumerate}
\end{cor}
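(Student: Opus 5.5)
The plan is to argue by induction on $k$, using \Cref{prop: =} for the binary case. It is convenient to first note that every $H_{\s{s_i}}$ is nontrivial, since $\s{s_i}\in\Seq_{\leq0}(\Zb)$. By \Cref{thm: lgcd structure} we also have $\lgcd(H_{\s{s_i}})=\s{s_i}$ and $H_{\lgcd(H)}=H$ for every nontrivial subgroup $H$. So each claimed identity is equivalent to computing the $\lgcd$ of the left-hand side and checking that it equals the corresponding sequence on the right.

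For $k=1$ all three items are trivial, and for $k=2$ they are exactly \eqref{eq: lgcd sum}, \eqref{eq: lgcd cap} and \eqref{eq: lgcd prod}, read through the bijection~\eqref{eq: bij lgcd}. For the inductive step, write
$$H_{\s{s_1}}+\cdots+H_{\s{s_k}}=\bigl(H_{\s{s_1}}+\cdots+H_{\s{s_{k-1}}}\bigr)+H_{\s{s_k}},$$
use the induction hypothesis to replace the bracket by $H_{\min(\s{s_1},\ldots,\s{s_{k-1}})}$, and then apply the binary case. Since the coordinatewise minimum is associative, $\min(\min(\s{s_1},\ldots,\s{s_{k-1}}),\s{s_k})=\min(\s{s_1},\ldots,\s{s_k})$. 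Items \ref{item: cap Hs} and \ref{item: prod Hs} go the same way, using associativity of $\max$ and of $+$ on $\Seq(\Zb)$ (with $-\infty$ absorbing). For the product we also use associativity of the setwise product $HKL=(HK)L$, which holds because multiplication in $\QQ$ is associative.

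The only point needing care, and it is minor, is that the intermediate sequences remain in $\Seq_{\leq0}(\Zb)$, so that the binary case applies to them. For $\min$ and $\max$ this is clear, because their positive supports are contained in the union of the positive supports. For a sum, $\supp_{>0}(\s{s}+\s{r})\subseteq\supp_{>0}(\s{s})\cup\supp_{>0}(\s{r})$ is finite, and the convention that $-\infty$ is absorbing keeps everything well defined. Alternatively, this is automatic, since the intermediate objects are nontrivial subgroups and \Cref{lem: lgcd eventually negative} applies.
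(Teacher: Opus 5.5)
Your proposal is correct and follows the same route as the paper, which simply says that the identities of \Cref{prop: =} extend to finite families by straightforward induction. Your extra checks are exactly the details the paper leaves implicit: nontriviality of each $H_{\s{s_i}}$, closure of $\Seq_{\leq0}(\Zb)$ under $\min$, $\max$ and $+$, and associativity of the setwise product.
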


\begin{cor}
Let $\mathcal{S}$ be a nonempty set of sequences in $\Seq_{\leq0}(\Zb)$. Then,
 \begin{enumerate}[(i)]
\item $\sum_{\s{s}\in \mathcal{S}} H_{\s{s}} = H_{\inf (\mathcal{S})}$.
\item $\bigcap_{\s{s}\in \mathcal{S}} H_{\s{s}}=\begin{cases} H_{\sup(\mathcal S)} & \text{if } \sup(\mathcal S)\in \Seq_{\leq0}(\Zb), \\ \{0\} & \text{otherwise}. \end{cases}$ \qed
 \end{enumerate}
\end{cor}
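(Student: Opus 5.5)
The plan is to reduce both items to \Cref{prop: Bezout Q} and \Cref{thm: lgcd structure}, applied to the arbitrary family $\{H_{\s{s}}\}_{\s{s}\in\mathcal S}$. No induction is needed, since the finite corollary does not directly cover infinite families.

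For (i), set $R=\bigcup_{\s{s}\in\mathcal S}(H_{\s{s}}\cap\QQ^+)$. Then $\sum_{\s{s}} H_{\s{s}}=\gen{R}$, because each $H_{\s{s}}$ is generated by its positive part. Next compute $\lgcd(R)$:
\[
\lgcd(R)=\inf_{\s{s}\in\mathcal S}\inf\plog(H_{\s{s}}\cap\QQ^+)=\inf_{\s{s}\in\mathcal S}\lgcd(H_{\s{s}})=\inf(\mathcal S),
\]
using $\lgcd(H_{\s{s}})=\s{s}$ from \Cref{thm: lgcd structure}; infima of infima are coordinatewise infima of the union. By \Cref{lem: lgcd eventually negative}, or directly because $\inf(\mathcal S)\preceq\s{s}$ for any fixed $\s{s}\in\mathcal S$, we get $\inf(\mathcal S)\in\Seq_{\leq0}(\Zb)$. \Cref{prop: Bezout Q} then gives $\gen{R}=\pm\PP^{\,\fSucc\inf(\mathcal S)}\cup\{0\}=H_{\inf(\mathcal S)}$.

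For (ii), a nonzero $x$ lies in $\bigcap_{\s{s}} H_{\s{s}}$ if and only if $|x|\in H_{\s{s}}$ for all $\s{s}\in\mathcal S$. By \Cref{prop: Bezout Q} this holds if and only if $\plog(|x|)\succeq\s{s}$ for all $\s{s}$, that is, if and only if $\plog(|x|)\succeq\sup(\mathcal S)$ coordinatewise. Here $\sup(\mathcal S)$ takes values in $\ZZ\cup\{\pm\infty\}$ in general.

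\emph{Case 1: $\sup(\mathcal S)\in\Seq_{\leq0}(\Zb)$.} The set of such $x$ is exactly $\pm\PP^{\,\fSucc\sup(\mathcal S)}$, so the intersection is $H_{\sup(\mathcal S)}$.

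\emph{Case 2: otherwise.} Either some coordinate equals $+\infty$, or infinitely many coordinates are positive. In the first situation no integer can dominate that coordinate. In the second, $\plog(|x|)\in\Seq_0(\ZZ)$ would need to be positive at infinitely many primes, which is impossible. In both situations no nonzero $x$ qualifies, so the intersection is $\{0\}$.

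The main obstacle is only bookkeeping. One must check that the infimum and supremum are taken in the right codomain: $\Zb$ for the infimum, and $\ZZ\cup\{\pm\infty\}$ for the supremum. One must also check that finiteness of support is the only obstruction in the second situation of Case 2; the argument above handles both ways it can fail.
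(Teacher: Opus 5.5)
Your proof is correct and complete. The paper gives no written proof of this corollary. It only remarks that the identities of \Cref{prop: =} extend ``with straightforward inductive arguments'', and induction yields only the finite version (the preceding corollary), not arbitrary $\mathcal S$. You are therefore right that an additional argument is needed.

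The natural extension of the paper's own method would rerun the proof of \Cref{prop: =} with $\inf$ and $\sup$ in place of $\min$ and $\max$, comparing subgroups through the anti-isomorphism \eqref{eq: subgroup inclusion}. You work with elements instead: you compute $\lgcd$ of the union of the positive parts and apply \Cref{prop: Bezout Q} directly. The two routes are equivalent in substance, since the anti-isomorphism is itself derived from \Cref{prop: Bezout Q}.

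Your route has a concrete advantage in (ii). The anti-isomorphism is only available on $\Sgp^*(\QQ,+)$, so the lattice argument needs a separate step. That step shows that $\bigcap_{\s{s}} H_{\s{s}}\neq\{0\}$ forces $\sup(\mathcal S)\preceq \lgcd\bigl(\bigcap_{\s{s}}H_{\s{s}}\bigr)\in\Seq_{\leq0}(\Zb)$. Your membership criterion $\plog(|x|)\succeq\sup(\mathcal S)$ handles both cases at once, including the two ways $\sup(\mathcal S)$ can fail to lie in $\Seq_{\leq0}(\Zb)$.
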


We conclude this discussion with several examples illustrating the different shapes that logarithmic gcd profiles of subgroups of $(\QQ,+)$ may take; see~\Cref{fig:lgcd-subgroups-types}.

\begin{figure}[h]
\centering
\begin{tikzpicture}[
    x=0.55cm,y=0.55cm,
    axis/.style={thin},
    tick/.style={thin},
    prof/.style={black, line width=0.8pt},
    upperdot/.style={gray!35, fill=gray!35},
    every node/.style={font=\small}
]

\def\ytop{4.1}

\newcommand{\panelaxes}{
    \draw[axis] (0,-3) -- (0,3);
    \draw[dashed, thin] (0,3) -- (0,\ytop);
    \draw[dashed, thin] (0,-4.1) -- (0,-3);
    \node[above] at (0,\ytop) {$\plog_p$};

    \foreach \yy in {-3,-2,-1,0,1,2,3}
    {
      \draw[tick] (-0.08,\yy) -- (0.08,\yy);
      \node[left] at (-0.08,\yy) {\yy};
    }

    \draw[tick] (-0.08,-4.1) -- (0.08,-4.1);
    \node[left] at (-0.08,-4.1) {$-\infty$};

    \node at (10.9,0) {$p$};
}

\begin{scope}[shift={(0,12)}]
\panelaxes

\node[anchor=south west, align=left] at (0.6,5.2) {%
    \textcolor{black}{$H=\left\langle \tfrac{375}{44}\right\rangle$}\\[1mm]
    \textcolor{black}{$\lgcd(H)=(-2,1,3,0,-1)$}
};

\foreach \y in {-1,0,1,2,3,4}
{
  \fill[upperdot] (1,\y) circle[radius=1.2pt];
}
\foreach \y in {2,3,4}
{
  \fill[upperdot] (2,\y) circle[radius=1.2pt];
}
\foreach \y in {4}
{
  \fill[upperdot] (3,\y) circle[radius=1.2pt];
}
\foreach \y in {1,2,3,4}
{
  \fill[upperdot] (4,\y) circle[radius=1.2pt];
}
\foreach \y in {0,1,2,3,4}
{
  \fill[upperdot] (5,\y) circle[radius=1.2pt];
}
\foreach \x in {6,7,8,9}
{
  \foreach \y in {1,2,3,4}
  {
    \fill[upperdot] (\x,\y) circle[radius=1.2pt];
  }
}
\foreach \y in {1,2,3,4}
{
  \fill[upperdot] (10,\y) circle[radius=1.2pt];
}

\draw[prof]
  (1,-2) -- (2,1) -- (3,3) -- (4,0) -- (5,-1) --
  (6,0) -- (7,0) -- (8,0) -- (9,0);

\foreach \x/\h in {1/-2,2/1,3/3,4/0,5/-1,6/0,7/0,8/0,9/0}
{
  \draw[prof] (\x,\h) circle[radius=2pt];
}

\node[prof] at (10,0) {$\cdots$};
\end{scope}

\begin{scope}[shift={(12.7,12)}]
\panelaxes

\node[anchor=south west, align=left] at (0.6,5.2) {%
    \textcolor{black}{$H=\left\langle \tfrac1{p_{2n}} : n\ge 1\right\rangle$}\\[1mm]
    \textcolor{black}{$\lgcd(H)=(0,-1,0,-1,0,-1,\ldots)$}
};

\foreach \x in {1,3,5,7,9}
{
  \foreach \y in {1,2,3,4}
  {
    \fill[upperdot] (\x,\y) circle[radius=1.2pt];
  }
}
\foreach \x in {2,4,6,8}
{
  \foreach \y in {0,1,2,3,4}
  {
    \fill[upperdot] (\x,\y) circle[radius=1.2pt];
  }
}
\foreach \y in {0,1,2,3,4}
{
  \fill[upperdot] (10,\y) circle[radius=1.2pt];
}

\draw[prof]
  (1,0) -- (2,-1) -- (3,0) -- (4,-1) -- (5,0) --
  (6,-1) -- (7,0) -- (8,-1) -- (9,0);

\foreach \x/\h in {1/0,2/-1,3/0,4/-1,5/0,6/-1,7/0,8/-1,9/0}
{
  \draw[prof] (\x,\h) circle[radius=2pt];
}

\node[prof] at (10,-0.5) {$\cdots$};
\end{scope}

\begin{scope}[shift={(0,0)}]
\panelaxes

\node[anchor=south west, align=left] at (0.6,5.2) {%
    \textcolor{black}{$H=\tfrac{5}{7}\ZZ\!\left[\tfrac16\right]$}\\[1mm]
    \textcolor{black}{$\lgcd(H)=(-\infty,-\infty,1,-1)$}
};

\foreach \x in {1,2}
{
  \foreach \y in {-3,-2,-1,0,1,2,3,4}
  {
    \fill[upperdot] (\x,\y) circle[radius=1.2pt];
  }
}
\foreach \y in {2,3,4}
{
  \fill[upperdot] (3,\y) circle[radius=1.2pt];
}
\foreach \y in {0,1,2,3,4}
{
  \fill[upperdot] (4,\y) circle[radius=1.2pt];
}
\foreach \x in {5,6,7,8,9}
{
  \foreach \y in {1,2,3,4}
  {
    \fill[upperdot] (\x,\y) circle[radius=1.2pt];
  }
}
\foreach \y in {1,2,3,4}
{
  \fill[upperdot] (10,\y) circle[radius=1.2pt];
}

\draw[prof] (1,-4.1) -- (2,-4.1);
\draw[prof,dashed] (2,-4.1) -- (2.216,-3);
\draw[prof]
  (2.216,-3) -- (3,1) -- (4,-1) -- (5,0) --
  (6,0) -- (7,0) -- (8,0) -- (9,0);

\foreach \x in {1,2}
{
  \draw[prof] (\x,-4.1) circle[radius=2pt];
}

\foreach \x/\h in {3/1,4/-1,5/0,6/0,7/0,8/0,9/0}
{
  \draw[prof] (\x,\h) circle[radius=2pt];
}

\node[prof] at (10,0) {$\cdots$};
\end{scope}

\begin{scope}[shift={(12.7,0)}]
\panelaxes

\node[anchor=south west, align=left] at (0.6,5.2) {%
    \textcolor{black}{$H=\QQ$}\\[1mm]
    \textcolor{black}{$\lgcd(H)=(-\infty,-\infty,-\infty,\ldots)$}
};

\foreach \x in {1,2,3,4,5,6,7,8,9}
{
  \foreach \y in {-3,-2,-1,0,1,2,3,4}
  {
    \fill[upperdot] (\x,\y) circle[radius=1.2pt];
  }
}
\foreach \y in {-3,-2,-1,0,1,2,3,4}
{
  \fill[upperdot] (10,\y) circle[radius=1.2pt];
}

\draw[prof]
  (1,-4.1) -- (2,-4.1) -- (3,-4.1) -- (4,-4.1) --
  (5,-4.1) -- (6,-4.1) -- (7,-4.1) -- (8,-4.1) -- (9,-4.1);

\foreach \x in {1,2,3,4,5,6,7,8,9}
{
  \draw[prof] (\x,-4.1) circle[radius=2pt];
}

\node[prof] at (10,-4.1) {$\cdots$};
\end{scope}

\end{tikzpicture}
\caption{Logarithmic gcd profiles for different kinds of subgroups of $(\QQ,+)$}
\label{fig:lgcd-subgroups-types}
\end{figure}

Also, as a corollary we obtain the well-known description of the finitely generated subgroups of $(\QQ,+)$. 
Following the usual convention in geometric group theory, we define the \defin{rank} of a group $G$, denoted by $\rk(G)$, as the minimum cardinality of a generating set for $G$. 

\begin{cor} \label{cor: fg iff}
Let $H$ be a nontrivial subgroup of $(\QQ,+)$. Then, the following are equivalent:
 \begin{enumerate}[(a)]
\item\label{item: cyclic} $H$ is cyclic,
\item\label{item: fg} $H$ is finitely generated,
\item\label{item: ftype} $\lgcd(H)$ is of finite type.
 \end{enumerate}
\end{cor}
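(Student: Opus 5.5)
I will prove the cycle of implications (a)$\Rightarrow$(b)$\Rightarrow$(c)$\Rightarrow$(a), using the bijection of \Cref{thm: lgcd structure} and the Bezout-type description of \Cref{prop: Bezout Q}.

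The implication (a)$\Rightarrow$(b) is trivial.

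For (b)$\Rightarrow$(c), let $H=\gen{r_1,\ldots,r_n}$, and assume without loss of generality that all $r_i\in\QQ^+$; we may replace generators by their absolute values and discard zeros, since $H\neq\{0\}$. By \Cref{cor: lgcd R = lgcd H}, $\lgcd(H)=\lgcd(\{r_1,\ldots,r_n\})=\min(\plog(r_1),\ldots,\plog(r_n))$. This is a finite coordinatewise minimum of sequences in $\Seq_0(\ZZ)$. Such a minimum has integral entries, because each coordinate is a minimum of finitely many integers. It also has finite support, because its support is contained in $\bigcup_i\supp(\plog(r_i))$, outside of which every entry is $0$. Hence $\lgcd(H)\in\Seq_0(\ZZ)$ is of finite type.

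For (c)$\Rightarrow$(a), set $\s{s}=\lgcd(H)\in\Seq_0(\ZZ)$ and $g=\PP^{\,\s{s}}\in\QQ^+$, which makes sense because $\s{s}$ is of finite type. Since $\plog(g)=\s{s}$, we have $\lgcd(\{g\})=\s{s}=\lgcd(H)$. Therefore $\gen{g}=H$, either by \Cref{cor: lgcd R = lgcd H} or by the injectivity in \Cref{thm: lgcd structure}, so $H$ is cyclic. As a sanity check, \Cref{prop: Bezout Q} gives this concretely: $x\in H^+$ if and only if $\plog(x)\succeq\plog(g)$, if and only if $x/g\in\NN^+$ by \Cref{prop: plog(Q)}.\ref{item: order log(Q)}. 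Thus $H=\ZZ g$.

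There is no genuine obstacle here. The only point needing care is in (b)$\Rightarrow$(c): a finite minimum of finitely supported integer sequences stays in $\Seq_0(\ZZ)$. This holds because finiteness rules out $-\infty$ entries, and the support is controlled by the union of the supports.
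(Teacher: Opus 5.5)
Your proof is correct and takes essentially the same route as the paper. You write $\lgcd(H)=\min(\plog(R))$ for a finite positive generating set $R$, and then identify $H$ with the cyclic group generated by $\PP^{\,\lgcd(H)}$ via \Cref{prop: Bezout Q}, which you phrase through its reformulation \Cref{cor: lgcd R = lgcd H}; your explicit check that a finite minimum of sequences in $\Seq_0(\ZZ)$ stays in $\Seq_0(\ZZ)$ spells out a detail the paper leaves implicit.
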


\begin{proof}
The implication $[\ref{item: cyclic} \Rightarrow \ref{item: fg}]$ is trivial.

For $[\ref{item: fg} \Rightarrow \ref{item: ftype}]$, assume that $H$ is finitely generated, say $H=\gen{R}$ for some finite set $R\subseteq \QQ$. Replacing each generator by its absolute value if necessary, we may assume that $R\subseteq \QQ^+$. Since $R$ is finite, $\lgcd(H)=\lgcd(R)=\min(\plog(R))\in \Seq_0(\ZZ)$ and this is of finite type.


Finally, to see $[\ref{item: ftype} \Rightarrow \ref{item: cyclic}]$, note that if $\lgcd(H) \in \Seq_0(\ZZ)$ then, by \Cref{prop: Bezout Q}, $H=\gen{H^+}=\pm \PP^{\,\fSucc \lgcd(H^+)}\cup \{0\}=\gen{h}$, where $h=\PP^{\, \lgcd(H^+)}\in \QQ$ is its positive generator. 
\end{proof}

\begin{rem} \label{rem: subgroup ranks}
Only three different ranks --- namely $0$, $1$, and $\infty$ --- can occur for a subgroup $H$ of $(\QQ,+)$:
 \begin{enumerate}[(a)]
\item\label{item: rk 0} if $H=\set{0}$ then $\rk(H)=0$;
\item\label{item: rk 1} if $\lgcd(H)$ is of finite type then $H=\gen{\gcd(H)}$, and therefore $\rk(H)=1$;
\item\label{item: rk infty} if $\lgcd(H)$ is of infinite type then $\rk(H)=\infty$. This will be the case in the following two (non-exclusive) situations:
 \begin{enumerate}[label=(c.\roman*)]
\item infinitely many entries in $\lgcd(H)$ are negative, \ie $|\supp(H)|=\infty$; in this case, we say that $H$ is a non-(finitely generated) subgroup of \defin{horizontal type}. 
\item at least one entry in $\lgcd(H)$ is equal to $-\infty$, \ie $-\infty \in \lgcd(H)$; in this case, we say that $H$ is a non-(finitely generated) subgroup of \defin{vertical type}. 
 \end{enumerate}
 \end{enumerate}
For example, $\gen{1/p \st p\in \PP}=\gen{1/2, 1/3, 1/5, \ldots}\leqslant \QQ$ is a non-(finitely generated) subgroup of horizontal and non-vertical type, $\gen{1/2^n \st n\geq 1}=\gen{1/2, 1/4, 1/8, \ldots }\leqslant \QQ$ is a non-(finitely generated) subgroup of vertical and non-horizontal type, and $\QQ=\gen{1/p^n \st p\in \PP,\, n\geq 1}$ is a non-(finitely generated) subgroup of both horizontal and vertical type.
\end{rem}


\begin{exm}[Localizations of $\ZZ$]
Let $P\subseteq \PP$ be any set of primes, and consider the subgroup
 $$
\ZZ[P^{-1}]:=\Set{\frac{a}{n}\in \QQ \st a\in\ZZ,\ n\in\NN^+,\ \supp(n)\subseteq P }\leqslant \QQ.
 $$
Then, $\ZZ[P^{-1}]$ is a subgroup of $(\QQ,+)$, called the \defin{localization of $\ZZ$ at 
$P$}; we have $\lgcd\bigl(\ZZ[P^{-1}]\bigr)=-\infty\cdot \mathbf{1}_{P}$. 
Here are some relevant particular cases:
 \begin{itemize}
\item if $P=\varnothing$ then $\ZZ[P^{-1}]=\ZZ$ is finitely generated (in fact, cyclic), with $\lgcd(\ZZ)=\s{0}$;
\item if $n=p_1^{e_1}\cdots p_k^{e_k}\in \NN_{\geq 2}$ and $P=\supp(n)=\{p_1,\ldots ,p_k\}$, then 
 $$
\ZZ[P^{-1}]=\ZZ\!\left[\tfrac{1}{n}\right]=\Set{\frac{a}{n^m}\st a\in\ZZ,\ m\in\NN}=\ZZ\!\left[\tfrac1{p_1},\ldots,\tfrac1{p_k}\right]
 $$
is non-(finitely generated) of vertical type, with $\lgcd(\ZZ[1/n])=-\infty\cdot \mathbf{1}_{P}$. 
\item if $|P|=\infty$ then $\ZZ[P^{-1}]=\ZZ[1/p, p\in P]$ is non-(finitely generated) of both horizontal and vertical types, with $\lgcd(\ZZ[P^{-1}])=-\infty\cdot \mathbf{1}_{P}$. In particular, $\ZZ[\PP^{-1}]=\QQ$ with $\lgcd(\QQ)=-\infty\cdot \mathbf{1}_{\PP}$.
\end{itemize}
\end{exm}

We now apply the logarithmic description of subgroups of $(\QQ,+)$ to study the multiplicative structures that they may inherit from~$\QQ$. More precisely, we characterize when a subgroup of~$(\QQ,+)$ is closed under multiplication, when does it contain the multiplicative identity, and when does it define a subring or an ideal of a localization $\ZZ[P^{-1}]$. The logarithmic viewpoint turns these algebraic conditions into simple coordinatewise restrictions on the associated
sequence~$\lgcd(H)$.

\begin{lem}
Let $H,K$ be nontrivial subgroups of $(\QQ,+)$. Then,
 \begin{enumerate}[(i)]
\item\label{item: 1 in H} $1\in H$ if and only if $\lgcd(H) \preceq \s{0}$;
\item\label{item: HH in H} $HH\subseteq H$ if and only if $\lgcd_p(H) \in \{-\infty\}\cup \NN$ for every $p\in \PP$;
\item\label{item: KH in H} $KH \subseteq H$ if and only if $\lgcd_p(K) \in \NN$ for every $p\in \supp_{\ZZ}(H)$.
 \end{enumerate}
\end{lem}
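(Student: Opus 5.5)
The plan is to reduce all three items to coordinatewise inequalities using the dictionary already established. The two tools are \Cref{prop: Bezout Q}, for membership of an element, and \Cref{thm: lgcd structure} together with \Cref{prop: =}, for inclusion of subgroups and for products. Throughout, write $\s{h}=\lgcd(H)$ and $\s{k}=\lgcd(K)$, so that $H=H_{\s{h}}$ and $K=H_{\s{k}}$.

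For \ref{item: 1 in H}, I apply \Cref{prop: Bezout Q} with $R=H\cap\QQ^+$ and $x=1$. Since $\plog(1)=\s{0}$, we get $1\in H$ if and only if $\s{0}\succeq\lgcd(H)$, which is exactly the claim.

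For \ref{item: KH in H}, I first note that by \Cref{prop: =} the set $KH$ is itself a nontrivial subgroup, with $\lgcd(KH)=\s{k}+\s{h}$. So the inclusion $KH\subseteq H$ is the subgroup inclusion $KH\leqslant H$. By \eqref{eq: subgroup inclusion} this is equivalent to $\s{k}+\s{h}\succeq\s{h}$, that is, to $\s{k}(p)+\s{h}(p)\geq\s{h}(p)$ for every $p\in\PP$. I would then check this prime by prime, using the convention that $-\infty$ is absorbing:
\begin{itemize}
\item If $\s{h}(p)=-\infty$, both sides equal $-\infty$, so the inequality holds for any value of $\s{k}(p)$.
\item If $\s{h}(p)\in\ZZ$, i.e.\ $p\in\supp_{\ZZ}(H)$, then $\s{k}(p)=-\infty$ makes the left side $-\infty<\s{h}(p)$, so the inequality fails. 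For $\s{k}(p)\in\ZZ$ the inequality reduces to $\s{k}(p)\geq 0$.
\end{itemize}
Hence the condition is $\s{k}(p)\in\NN$ for all $p\in\supp_{\ZZ}(H)$, as claimed.

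Item \ref{item: HH in H} is the special case $K=H$ of \ref{item: KH in H}. It gives $HH\subseteq H$ if and only if $\s{h}(p)\in\NN$ whenever $\s{h}(p)\in\ZZ$, which is equivalent to $\s{h}(p)\in\{-\infty\}\cup\NN$ for every $p$.

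The only delicate point is the arithmetic in $\Zb$. One has to check that the addition convention is the one underlying the identity $\lgcd(HK)=\lgcd(H)+\lgcd(K)$ in \Cref{prop: =}, and that no subtraction of infinities is ever needed. Comparing $\s{k}+\s{h}$ with $\s{h}$ directly, rather than subtracting $\s{h}$ from both sides, avoids the $(-\infty)-(-\infty)$ convention entirely. Beyond that, everything follows formally from the results already proved.
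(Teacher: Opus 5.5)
Your proof is correct and follows essentially the same route as the paper. Item (i) comes from \Cref{prop: Bezout Q} with $\plog(1)=\s{0}$, and item (iii) from comparing $\lgcd(KH)=\s{k}+\s{h}$ with $\s{h}$ coordinatewise via \eqref{eq: lgcd prod} and \eqref{eq: subgroup inclusion}; the only cosmetic difference is that you obtain (ii) as the case $K=H$ of (iii), whereas the paper checks $\s{h}\preceq 2\s{h}$ directly, which is the same computation.
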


\begin{proof}
Let $\s{h}=\lgcd(H)$ and $\s{k}=\lgcd(K)$. 

Since $\plog(1)=\s{0}$, (i) follows immediately from \Cref{prop: Bezout Q}.

By~\eqref{eq: lgcd prod}, $HH\subseteq H$ if and only if $\s{h}\preceq 2\s{h}$, \ie if and only if $h_p\leq 2h_p$ for every $p\in\PP$. This is equivalent to
$h_p\in\{-\infty\}\cup\NN$ for every $p\in\PP$, proving~(ii).
    
Again by~\eqref{eq: lgcd prod}, $KH\subseteq H$ if and only if $\s{h}\preceq \s{k}+\s{h}$, \ie if and only if, for every $p\in \PP$, either $h_p=-\infty$ or $k_p\in\NN$. This proves~(iii).
\end{proof}

\begin{cor}
Let $H$ be a nontrivial subgroup of $(\QQ,+)$. Then,
 \begin{enumerate}[(i)]
\item $H$ is a subring of $\QQ$ if and only if all finite entries of $\lgcd(H)$ are nonnegative.
\item $H$ is a unital subring of $\QQ$ if and only if all finite entries of $\lgcd(H)$ are zero. That is, if $H$ is a localization~$H=\ZZ[P^{-1}]$, for some $P\subseteq \PP$. \qed
 \end{enumerate}
\end{cor}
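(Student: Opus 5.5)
The plan is to read both items directly off the preceding lemma, using the bijection of \Cref{thm: lgcd structure} to pass between $H$ and $\s{h}=\lgcd(H)$. The only subtlety is the convention on subrings. In item (i), ``subring'' means a nontrivial additive subgroup closed under multiplication, with no unit required. In item (ii) we additionally require $1\in H$.

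For item (i), $H$ is an additive subgroup by hypothesis, so $H$ is a subring exactly when $HH\subseteq H$. By item \ref{item: HH in H} of the previous lemma, this holds if and only if $h_p\in\{-\infty\}\cup\NN$ for every $p\in\PP$. That condition says precisely that every finite entry of $\s{h}$ is nonnegative.

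For item (ii), being a unital subring means being a subring with $1\in H$. First I would combine (i) with item \ref{item: 1 in H} of the lemma, which gives $\s{h}\preceq\s{0}$. Together these force every entry to lie in $(\{-\infty\}\cup\NN)\cap(\{-\infty\}\cup\ZZ_{\leq 0})=\{-\infty,0\}$. So every finite entry is zero, and the converse implication is immediate from the same two items.

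To identify the localization, I would set $P=\supp_{-\infty}(H)$. Then the previous step gives $\s{h}=-\infty\cdot\mathbf{1}_P$. The example on localizations computed $\lgcd(\ZZ[P^{-1}])=-\infty\cdot\mathbf{1}_P$, so injectivity of the bijection in \Cref{thm: lgcd structure} yields $H=\ZZ[P^{-1}]$. Conversely, every $\ZZ[P^{-1}]$ has this $\lgcd$ and is therefore a unital subring. Nothing here is a real obstacle; the only care needed is to keep the nonunital convention in (i).
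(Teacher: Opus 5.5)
Your proposal is correct and is essentially the argument the paper leaves implicit (the corollary is stated without proof). Item (i) is the lemma's criterion for $HH\subseteq H$; item (ii) intersects it with $1\in H \Leftrightarrow \lgcd(H)\preceq\s{0}$; and the identification $H=\ZZ[P^{-1}]$ with $P=\supp_{-\infty}(H)$ follows from the localization example and the injectivity of $H\mapsto\lgcd(H)$ — one could add only that a multiplicative identity $e$ of a nonzero subring of $\QQ$ satisfies $e^2=e\neq 0$, so $e=1$ and ``unital'' indeed means $1\in H$.
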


\begin{exm}[Subrings of $(\QQ,+,\cdot)$]
The previous corollary distinguishes three different situations:
 \begin{enumerate}[(a)]
\item The subgroup $\frac{1}{5}\,\ZZ[\frac{1}{6}]$ is not a subring of $\QQ$, since $\lgcd(\frac{1}{5}\,\ZZ[\frac{1}{6}]) = (-\infty,-\infty,-1)$ and $-1\not\in \NN\cup \{-\infty\}$; in fact, $(\frac{1}{5})^2 =\frac{1}{25}\notin \frac{1}{5}\,\ZZ[\frac{1}{6}]$.
\item The subgroup $5\,\ZZ[\frac{1}{6}]$ is a subring of $\QQ$ but non-unital, since $\lgcd(5\,\ZZ[\frac{1}{6}])=(-\infty,-\infty,1)$; in fact, $1\notin 5\,\ZZ[\frac{1}{6}]$.
\item The subgroup $\ZZ[\frac{1}{6}]$ is a unital subring of $\QQ$, since $\lgcd(\ZZ[\frac{1}{6}])=(-\infty, -\infty)$.
\end{enumerate}
\end{exm}

\begin{lem} 
Let $P\subseteq \PP$, and let $H$ be a nontrivial subgroup of $(\QQ,+)$. Then:
 \begin{enumerate}[(i)]
\item $H$ is a subgroup of $\ZZ[P^{-1}]$ if and only if $\lgcd_p(H) \in \NN$, for every $p \in \PP \setmin P$. 
\item $H$ is a subring of $\ZZ[P^{-1}]$ if  and only if there exists some $T\subseteq P$ such that
\[
\lgcd_p(H) \,=\, \bigg\{ \begin{array}{ll} -\infty & \text{if } p\in T \\ \in \NN  & \text{if } p\in \PP \setmin T \,. \end{array}
 \]
\item $H$ is a unital subring of $\ZZ[P^{-1}]$ if and only if there exists some $T\subseteq P$ such that $H=\ZZ[T^{-1}]$.
\item $H$ is an ideal of $\ZZ[P^{-1}]$ if and only if
\[
\lgcd_p(H) \,=\, \bigg\{ \begin{array}{ll} -\infty & \text{if } p\in P \\ \in \NN  & \text{if } p\in \PP \setmin P \,. \tag*{\qed}\end{array}
 \]
 \end{enumerate}
\end{lem}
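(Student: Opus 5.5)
Throughout, write $\s{h}=\lgcd(H)$, so that $H=H_{\s{h}}$ by \Cref{thm: lgcd structure}; all four items will be reduced to coordinatewise conditions on $\s{h}$. We will first record the sequence of the ambient localization, $\lgcd(\ZZ[P^{-1}])=-\infty\cdot\mathbf{1}_P$. By~\eqref{eq: subgroup inclusion}, $H\leqslant\ZZ[P^{-1}]$ holds if and only if $\s{h}\succeq-\infty\cdot\mathbf{1}_P$. This inequality is automatic at primes $p\in P$. At primes $p\notin P$ it says exactly $h_p\geq 0$, i.e.\ $h_p\in\NN$, which proves (i).

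For (ii), we will combine (i) with the subring criterion of the previous lemma: $HH\subseteq H$ if and only if every $h_p\in\{-\infty\}\cup\NN$. Set $T=\supp_{-\infty}(\s{h})$. By (i), $h_p\in\NN$ for all $p\notin P$, so necessarily $T\subseteq P$. All remaining entries of $\s{h}$ must then lie in $\NN$, which gives the displayed form. Conversely, any $\s{h}$ of that form, with $T\subseteq P$, satisfies both (i) and the multiplicative-closure condition, so $H$ is a subring of $\ZZ[P^{-1}]$.

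For (iii), we add the condition $1\in H$, which by the previous lemma means $\s{h}\preceq\s{0}$. Together with (ii), the finite entries of $\s{h}$ must be both in $\NN$ and $\leq0$, hence equal to $0$. Therefore $\s{h}=-\infty\cdot\mathbf{1}_T$, and $H=\ZZ[T^{-1}]$ by the localization example and the bijection of \Cref{thm: lgcd structure}. The converse is immediate from the same computation.

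For (iv), the ideal condition means $H\leqslant\ZZ[P^{-1}]$ and $KH\subseteq H$ with $K=\ZZ[P^{-1}]$. By the previous lemma, $KH\subseteq H$ holds if and only if $\lgcd_p(K)\in\NN$ for every $p$ with $h_p$ finite. Since $\lgcd_p(K)=-\infty$ exactly for $p\in P$, this forces $h_p=-\infty$ for all $p\in P$. Combined with (i), we get $h_p\in\NN$ for $p\notin P$, which is the stated form. Conversely, a sequence of that form satisfies both conditions, so $H$ is an ideal.

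The only delicate point is to read the condition ``$p\in\supp_\ZZ(H)$'' correctly as ``$h_p$ finite''. There is also a minor bookkeeping issue: the trivial ideal $\{0\}$ is excluded by the hypothesis that $H$ is nontrivial. Beyond that, each item is a direct translation through the earlier lemmas.
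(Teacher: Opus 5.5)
Your proof is correct and follows the intended route. The paper states this lemma without proof, treating it as a direct translation through the inclusion criterion~\eqref{eq: subgroup inclusion}, the value $\lgcd(\ZZ[P^{-1}])=-\infty\cdot\mathbf{1}_P$ from the localization example, and the preceding lemma on $1\in H$, $HH\subseteq H$ and $KH\subseteq H$; your argument supplies exactly that translation, including the correct reading of $\supp_{\ZZ}(H)$ as the primes where $\lgcd_p(H)$ is finite, which is what forces $\lgcd_p(H)=-\infty$ on all of $P$ in (iv).
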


\section{Homomorphisms and isomorphisms}

The behavior of homomorphisms involving subgroups of $\QQ$ is particularly simple and it essentially reduces to the crucial lemma below.

\begin{lem}\label{lem: Hom}
Let $H$ be a subgroup of $(\QQ,+)$. Then, every group homomorphism $H\to \QQ$ is of the form $\mu_r \colon x\mapsto rx$, where $r\in \QQ$. In particular, every nontrivial homomorphism $H\to \QQ$ is injective.
\end{lem}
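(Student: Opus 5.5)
The plan is to show that any homomorphism $\varphi\colon H\to\QQ$ is determined by its value at a single nonzero element, using that any two elements of $\QQ$ are commensurable. If $H=\{0\}$ the statement is trivial (take $r=0$), so assume $H\neq\{0\}$ and fix some $h_0\in H\setmin\{0\}$. Set $r=\varphi(h_0)/h_0\in\QQ$. We must show that $\varphi(x)=rx$ for every $x\in H$.

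\textbf{Key step.} Let $x\in H$ be arbitrary. Since $x,h_0\in\QQ$, write $x/h_0=m/n$ with $m\in\ZZ$ and $n\in\NN^+$. Then $nx=mh_0$ holds in $H$. Applying the homomorphism $\varphi$ to both sides gives
\[
n\varphi(x)=\varphi(nx)=\varphi(mh_0)=m\varphi(h_0)=m r h_0 .
\]
The target $\QQ$ is torsion-free, or more simply we can divide by $n$ in $\QQ$. This yields $\varphi(x)=(m/n) r h_0 = r x$, so $\varphi=\mu_r$.

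Alternatively, one could phrase this through \Cref{cor: fg iff}: the subgroup $\gen{x,h_0}$ is cyclic, say generated by $g$. Then $\varphi|_{\gen{g}}$ is multiplication by $\varphi(g)/g$, and this ratio must agree with $r$ because $h_0\in\gen{g}$. The direct computation above is shorter, however, and I would use it.

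\textbf{Injectivity.} If $\varphi$ is nontrivial, then $r\neq 0$. Hence $rx=0$ forces $x=0$, so $\varphi$ is injective.

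There is no real obstacle here. The only point needing care is the step of dividing by $n$, which is legitimate because the codomain is $\QQ$ and hence uniquely divisible. This is exactly where the argument uses that the target is $\QQ$ and not an arbitrary group.
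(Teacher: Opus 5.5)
Your proof is correct and follows essentially the same argument as the paper. Both fix a nonzero element of $H$, use $x/h_0\in\QQ$ to obtain an integer relation $nx=mh_0$ in $H$, apply $\varphi$ and divide in $\QQ$ to get $\varphi(x)=\frac{\varphi(h_0)}{h_0}\,x$, and deduce injectivity from $r\neq 0$.
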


\begin{proof}
If $H=\set{0}$ the only homomorphism is $0\mapsto 0$, which is of the claimed form with $r=0$. Otherwise, fix $s\in H\setmin \set{0}$ and take any $x\in H$. Since $s,x\in \QQ$, and $s\neq 0$, $\frac{x}{s} \in \QQ$, that is there exist $a\in \ZZ$ and $b\in \ZZ \setmin \set{0}$ such that $as=bx\in H$. Now, if $\varphi \colon H\to \QQ$ is a group homomorphism, then applying it to both sides we get $a\varphi(s)=\varphi(as)=\varphi(bx)=b\varphi(x)$. That is,
 $$
\varphi(x) \,=\, \frac{a}{b} \,\varphi(s) \,=\, \frac{\varphi(s)}{s} \, x\,, 
 $$
where $\frac{\varphi(s)}{s}\in \QQ$, as claimed. Injectivity is immediate since, for every $r\in \QQ \setmin \set{0}$, $rx=0$ implies $x=0$.
\end{proof}

In particular, every homomorphism from a subgroup of $\QQ$, to $\QQ$, extends to an endomorphism of $\QQ$. 

\begin{cor} \label{cor: End(Q)}
Let $H\neq \{0\}$ be a subgroup of $(\QQ,+)$. Then, the maps 
 $$
 \begin{array}{rcccl}
\QQ & \to & \End(\QQ) & \to & \Hom(H,\QQ) \\ r & \mapsto & \mu_r & \mapsto & \mu_{r\mid H} \,,
 \end{array}
 $$
are bijective, the first being a ring isomorphism from $(\QQ,+,\cdot)$ to $(\End(\QQ), +,\circ)$. \qed
\end{cor}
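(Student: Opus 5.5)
The corollary has three assertions: the map $r\mapsto\mu_r$ is a bijection $\QQ\to\End(\QQ)$, the restriction map $\End(\QQ)\to\Hom(H,\QQ)$ is a bijection, and the first map is a ring isomorphism. I would prove them in that order, with \Cref{lem: Hom} doing almost all the work.

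\emph{Step 1: the map $\QQ\to\End(\QQ)$.} Each $\mu_r$ is additive, so the map is well defined. Applying \Cref{lem: Hom} with $H=\QQ$ shows that every endomorphism of $\QQ$ equals $\mu_r$ for some $r$, which gives surjectivity. For injectivity, note that $r=\mu_r(1)$, so $r$ is recovered from $\mu_r$.

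\emph{Step 2: the restriction map $\End(\QQ)\to\Hom(H,\QQ)$.} Surjectivity again comes from \Cref{lem: Hom}: any $\varphi\in\Hom(H,\QQ)$ equals $\mu_r|_H$ for some $r\in\QQ$, and this is the restriction of $\mu_r\in\End(\QQ)$. For injectivity, suppose $\mu_r|_H=\mu_{r'}|_H$. Fix any $s\in H\setmin\{0\}$, which exists because $H\neq\{0\}$. Then $rs=r's$, and since $s\neq 0$ and $\QQ$ is a field, $r=r'$. Hence both maps, and their composite, are bijections.

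\emph{Step 3: the ring structure.} We need $\mu_{r+r'}=\mu_r+\mu_{r'}$, which is distributivity in $\QQ$. We need $\mu_{rr'}=\mu_r\circ\mu_{r'}$, which is associativity of multiplication. Finally, $\mu_1=\mathrm{id}$, so the map preserves the unit. Combined with Step 1, this makes $r\mapsto\mu_r$ a ring isomorphism from $(\QQ,+,\cdot)$ to $(\End(\QQ),+,\circ)$.

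There is no real obstacle here. The only point needing care is the injectivity of the restriction map, and that is exactly where the hypothesis $H\neq\{0\}$ is used.
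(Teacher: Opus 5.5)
Your proof is correct and follows the route the paper intends. The paper states this corollary with \qed as an immediate consequence of \Cref{lem: Hom}, which gives surjectivity of both maps; injectivity (using some $s\in H\setmin\{0\}$) and the ring-homomorphism identities are exactly the routine checks you carry out.
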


Hence, to characterize the endomorphisms (\resp automorphisms) of a subgroup $H\leqslant (\QQ,+)$, it suffices to consider endomorphisms $\varphi \in \End(\QQ,+)$ and impose the condition $\varphi(H)\leqslant H$ (\resp $\varphi(H)=H$). This can be done smoothly using our logarithmic description.

\begin{prop}
Let $H$ be a nontrivial subgroup of $(\QQ,+)$.
\begin{enumerate}[(i)]
\item \label{eq: End(H)} The set of endomorphisms of $H$ is
 \begin{align*}
\End(H) & =\set{ \mu_{r\mid H} \st r\in\QQ,\ rH\subseteq H} \\ & =\set{\mu_{0\mid H}} \cup
\set{\, \mu_{r\mid H} \st r\in\QQ^*,\ \plog_p(|r|)\geq 0 \ \forall p\in \PP \setminus \supp_{-\infty}(H)}.
 \end{align*}
Moreover, the map $\mu_{r\mid H} \mapsto r$ is an isomorphism of multiplicative monoids between 
$\End(H)$ and $\set{r\in\QQ\mid rH\subseteq H}$.
\item \label{eq: Aut(H)} The set of automorphisms of $H$ is
 \begin{align}
\Aut(H) & =\set{ \mu_{r\mid H} \st r\in\QQ^*,\ rH=H} \\ & =\set{ \, \mu_{r\mid H} \st r\in\QQ^*,\ \plog_p(|r|)=0 \ \forall p\in \PP \setminus \supp_{-\infty}(H) }.
 \end{align}
Moreover, the map $\mu_{r\mid H} \mapsto r$ is an isomorphism of groups between $\Aut(H)$ and
$\set{r\in\QQ^*\mid rH=H} \leqslant \QQ^*$. Hence,
 $$
\Aut(H) \isom \ZZ/2\ZZ \times 
\textstyle{\bigoplus_{\supp_{-\infty}(H)} \ZZ} \,.   
 $$
\end{enumerate}

\end{prop}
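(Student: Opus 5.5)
The plan is to reduce everything to \Cref{lem: Hom} and \Cref{cor: End(Q)}, and then read off the conditions coordinatewise using \eqref{eq: lgcd scalar} and \eqref{eq: subgroup inclusion}.

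\textbf{Step 1: every endomorphism is a multiplication.} By \Cref{lem: Hom}, each endomorphism $\varphi$ of $H$, viewed as a homomorphism $H\to\QQ$, equals $\mu_{r\mid H}$ for a unique $r\in\QQ$; uniqueness comes from \Cref{cor: End(Q)}. Such a map lands in $H$ exactly when $rH\subseteq H$. This gives the first description of $\End(H)$. Since $\mu_{r}\circ\mu_{s}=\mu_{rs}$ and $\mu_1=\mathrm{id}$, the assignment $\mu_{r\mid H}\mapsto r$ is a monoid isomorphism onto $\{r\in\QQ \mid rH\subseteq H\}$.

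\textbf{Step 2: the coordinatewise condition for $\End(H)$.} For $r\neq 0$ we have $rH=|r|H$, because $H=-H$. By \eqref{eq: lgcd scalar}, $\lgcd(|r|H)=\plog(|r|)+\lgcd(H)$. By \eqref{eq: subgroup inclusion}, $|r|H\subseteq H$ holds if and only if
\[
\plog(|r|)+\lgcd(H)\succeq\lgcd(H).
\]
At a prime $p$ with $\lgcd_p(H)=-\infty$ both sides are $-\infty$, so there is no condition. At the remaining primes the entry $\lgcd_p(H)$ is finite and can be cancelled, which leaves $\plog_p(|r|)\ge 0$. This gives the stated formula for $\End(H)$; the case $r=0$ is added separately.

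\textbf{Step 3: automorphisms.} An automorphism is an invertible endomorphism. Since $\mu_{r\mid H}$ is injective for $r\neq 0$, it is an automorphism exactly when it is also surjective, i.e.\ when $rH=H$. By the bijectivity in \Cref{thm: lgcd structure}, $rH=H$ holds if and only if $\plog(|r|)+\lgcd(H)=\lgcd(H)$. Arguing as in Step 2, this means $\plog_p(|r|)=0$ for every $p\notin\supp_{-\infty}(H)$. Equivalently, it is the condition of Step 2 imposed on both $r$ and $r^{-1}$. Restricting the monoid isomorphism of Step 1 to units gives a group isomorphism onto $\{r\in\QQ^*\mid rH=H\}$.

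\textbf{Step 4: the structure of the automorphism group.} Using the corollary $\QQ^*\cong\ZZ/2\ZZ\times\bigoplus_{p\in\PP}\ZZ$, which sends $r\mapsto(\mathrm{sign}(r),\plog(|r|))$, the subgroup from Step 3 corresponds to all signs together with the finitely supported integer sequences supported on $\supp_{-\infty}(H)$. That subgroup is $\ZZ/2\ZZ\times\bigoplus_{\supp_{-\infty}(H)}\ZZ$. The finite-support requirement is automatic, since $\plog(|r|)\in\Seq_0(\ZZ)$.

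I do not expect a real obstacle. The only delicate point is the cancellation at primes where $\lgcd_p(H)=-\infty$: under the convention that $-\infty$ is absorbing, the inequality and equality there are vacuous, while at finite entries genuine cancellation is allowed.
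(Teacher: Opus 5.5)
Your proposal is correct and follows essentially the same route as the paper. You reduce to multiplications $\mu_{r\mid H}$ via \Cref{lem: Hom} and \Cref{cor: End(Q)}, then translate $|r|H\subseteq H$ into $\plog(|r|)+\lgcd(H)\succeq\lgcd(H)$ and split according to whether $\lgcd_p(H)=-\infty$. The differences are cosmetic: for automorphisms you read $rH=H$ directly as an equality of $\lgcd$'s, where the paper applies part (i) to both $r$ and $r^{-1}$ (you note this is equivalent), and you spell out the monoid isomorphism and the final structure of $\Aut(H)$, which the paper leaves implicit.
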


\begin{proof}
By \Cref{cor: End(Q)}, every endomorphism of $H$ is of the form $\mu_{r\mid H}$ for some $r\in \QQ$. Hence,
 \begin{align}
\varphi \in \End(H) \,\Biimp\, \varphi =\mu_{r|H}  \text{\, and \,} \mu_r(H) \leqslant H, \label{eq: muH in H} \\ \varphi \in \Aut(H) \,\Biimp\, \varphi =\mu_{r|H} \text{\, and \,} \mu_r(H)=H.
 \end{align} 
If $r=0$, then $\mu_{0\mid H}\in \End(H)$ and clearly $\mu_{0\mid H}\notin \Aut(H)$; so, in what follows we may assume $r\neq 0$. Then, using \Cref{eq: subgroup inclusion}, we have:
 \begin{align}
\mu_r(H) \leqslant H & \,\Biimp\, rH\leqslant H \notag \\
& \,\Biimp\, |r|H\leqslant H \notag \\
& \,\Biimp\, \lgcd(|r|H) \succeq \lgcd(H)\notag \\
& \,\Biimp\, \plog(|r|)+\lgcd(H) \succeq \lgcd(H) \notag \\
& \,\Biimp\, \plog_p(|r|)+\lgcd_p(H) \geq \lgcd_p(H), \quad \forall p\in \PP. \label{eq: log + lgcd}
 \end{align}
Now, we distinguish two cases: if $\lgcd_{p}(H)=-\infty$ then~\eqref{eq: log + lgcd} is always true and it imposes no condition on~$\plog_p(|r|)$; otherwise, $\lgcd_{p}(H)>-\infty$ and then~\eqref{eq: log + lgcd} is true if and only if $\plog_p(|r|) \geq 0$. Putting these conditions together we obtain \ref{eq: End(H)}. 

Finally, to see~\ref{eq: Aut(H)} it is enough to observe that
 $$
\mu_{r\mid H} \in \Aut(H) \,\Biimp\, \mu_{r\mid H}\in \End(H) \text{\, and \,} \mu_{r^{-1}\mid H}\in \End(H).
 $$
Applying~\ref{eq: End(H)} to both $r$ and $r^{-1}$, we get $\plog_p(|r|)\geq 0$ and $\plog_p(|r^{-1}|)\geq 0$, for every $p\in \PP$ such that $\lgcd_p(H)\neq -\infty$. Since $\plog_p(|r^{-1}|)=-\plog_p(|r|)$, it follows that~$\plog_p(|r|)=0$ for every such $p$. This proves~\ref{eq: Aut(H)}.
\end{proof}

The previous proposition takes a particularly simple form when applied to finite localizations.

\begin{cor}
Let $n\in \NN_{\geq 2} $ with $\supp(n)=\{ p_1,\ldots ,p_k\}$. Then,
 \begin{align}
\End\!\left(\ZZ\!\left[\tfrac{1}{n}\right]\right) & =\{\mu_0\} \cup \Set{ \mu_{\pm r} \st \textstyle{r=\prod_{p\in\PP} p^{e_p} },\ e_p\in\ZZ,\ e_p\ge 0\ \forall p\notin \supp(n)}. \\[3pt] \Aut\!\left(\ZZ\!\left[\tfrac{1}{n}\right]\right)
& \,=\, \Set{ \mu_{\pm r} \st r=p_1^{e_1} \cdots p_k^{e_k} \ , \, e_1,\ldots,e_k \in \ZZ }\isom \ZZ/2\ZZ \oplus \ZZ^k.
 \end{align}
Thus, $\End(\ZZ[\frac{1}{n}])$ can be identified with the ring $\ZZ[\frac{1}{n}]$ itself, and $\Aut(\ZZ[\frac{1}{n}])$ with its group of units~$\ZZ[\frac{1}{n}]^\times \isom \ZZ/2\ZZ \oplus \ZZ^k$. \qed
\end{cor}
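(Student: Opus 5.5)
The statement to prove is the corollary for $H=\ZZ[\frac1n]$, so the plan is to specialize the preceding proposition on $\End(H)$ and $\Aut(H)$. The only input needed is $\supp_{-\infty}\bigl(\ZZ[\frac1n]\bigr)$. By the localization example, $\lgcd(\ZZ[\frac1n])=-\infty\cdot\mathbf{1}_P$ with $P=\supp(n)=\{p_1,\ldots,p_k\}$. Hence $\supp_{-\infty}(\ZZ[\frac1n])=\{p_1,\ldots,p_k\}$, while $\lgcd_p=0$ for every other prime.

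\textbf{Endomorphisms.} Substituting into part (i) of the proposition, a nonzero $\mu_r$ is an endomorphism if and only if $\plog_p(|r|)\geq 0$ for all $p\notin\supp(n)$. Write $|r|=\prod_p p^{e_p}$ via the bijection $\plog\colon\QQ^+\to\Seq_0(\ZZ)$ and allow both signs $\pm r$. This yields exactly the displayed description of $\End(\ZZ[\frac1n])$ after adjoining $\mu_0$.

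\textbf{Automorphisms.} Part (ii) of the proposition forces $e_p=0$ for all $p\notin\supp(n)$, with $e_{p_1},\ldots,e_{p_k}\in\ZZ$ arbitrary. So $\Aut(\ZZ[\frac1n])$ consists of the $\mu_{\pm p_1^{e_1}\cdots p_k^{e_k}}$. The group isomorphism $\ZZ/2\ZZ\oplus\ZZ^k$ is then the specialization of $\Aut(H)\isom\ZZ/2\ZZ\times\bigoplus_{\supp_{-\infty}(H)}\ZZ$ with $|\supp_{-\infty}(H)|=k$.

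\textbf{Identifications.} First I would check the set equality
\[
\{r\in\QQ \mid e_p\ge0 \ \forall p\notin\supp(n)\}=\ZZ[\tfrac1n].
\]
This follows from \Cref{prop: Bezout Q} and \Cref{thm: lgcd structure}, since $\ZZ[\frac1n]\setminus\{0\}=\pm\PP^{\fSucc(-\infty\cdot\mathbf 1_P)}$. Next, the ring isomorphism $r\mapsto\mu_r$ of \Cref{cor: End(Q)} restricts to an isomorphism from this subring onto $\End(\ZZ[\frac1n])$; composition corresponds to multiplication, and pointwise addition is respected. Finally, the units of $\ZZ[\frac1n]$ are the $r$ with both $r$ and $r^{-1}$ in the ring, which is precisely $e_p=0$ off $\supp(n)$. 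This matches the automorphism description.

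None of these steps is a real obstacle, since everything is direct substitution. The only point needing care is confirming that the monoid isomorphism in part (i) of the proposition is additive as well, which is inherited from \Cref{cor: End(Q)}.
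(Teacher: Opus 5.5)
Your proposal is correct and follows the paper's intended route: the corollary is stated with \qed as a direct specialization of the preceding proposition, using $\supp_{-\infty}(\ZZ[\frac1n])=\supp(n)$ from the localization example. Your additional checks, that $\{r\in\QQ \mid r\,\ZZ[\frac1n]\subseteq \ZZ[\frac1n]\}=\ZZ[\frac1n]$ and that $r\mapsto\mu_{r\mid H}$ is additive as well as multiplicative (inherited from \Cref{cor: End(Q)}), correctly supply the ring and unit-group identifications that the paper leaves implicit.
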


As a consequence of \Cref{lem: Hom}, we obtain a classification of isomorphic classes within subgroups of $\QQ$. Recall that two subgroups $H,K$ of a given group are called \defin{commensurable} if their intersection $H\cap K$ has finite index in both $H$ and $K$.

\begin{prop}\label{prop: isom}
Let $H,K$ be nontrivial subgroups of $(\QQ,+)$. Then, the following conditions are equivalent:
 \begin{enumerate}[(a)]
\item $H\isom K$; \label{item: isom}
\item $K=rH$, for some $r\in \QQ^*$; \label{item: K = rH}
\item $\lgcd(H)-\lgcd(K) \in \Seq_0(\ZZ)$; \label{item: lgcd - lgcd}
\item $\dist(H,K) < + \infty$; \label{item: dist}
\item $H$ and $K$ are commensurable. \label{item: commensurable}
 \end{enumerate}
\end{prop}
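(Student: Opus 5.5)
I will prove the cycle of implications (a)$\Rightarrow$(b)$\Rightarrow$(c)$\Rightarrow$(d)$\Rightarrow$(e)$\Rightarrow$(c)$\Rightarrow$(b)$\Rightarrow$(a). Throughout, write $\s{h}=\lgcd(H)$ and $\s{k}=\lgcd(K)$.

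For (a)$\Rightarrow$(b), let $\varphi\colon H\to K$ be an isomorphism. Composing with the inclusion $K\leqslant\QQ$ gives a homomorphism $H\to\QQ$, so by \Cref{lem: Hom} $\varphi=\mu_r$ for some $r\in\QQ$. Since $\varphi$ is an isomorphism onto a nontrivial $K$, we have $r\neq 0$, and hence $K=rH$.

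For (b)$\Rightarrow$(a), multiplication by $r\neq 0$ is an injective homomorphism onto $rH=K$, so $H\isom K$.

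For (b)$\Rightarrow$(c), note that $rH=|r|H$ since $H=-H$. Then \eqref{eq: lgcd scalar} gives $\s{k}=\plog(|r|)+\s{h}$. With the convention $(-\infty)-(-\infty)=0$, this yields $\s{h}-\s{k}=-\plog(|r|)\in\Seq_0(\ZZ)$. Coordinates where either sequence is $-\infty$ must be $-\infty$ in both, because adding a finite integer preserves finiteness.

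For (c)$\Rightarrow$(b), set $\s{e}=\s{k}-\s{h}\in\Seq_0(\ZZ)$ and $r=\PP^{\,\s{e}}\in\QQ^+$. The only delicate point is that (c) must force $\supp_{-\infty}(\s{h})=\supp_{-\infty}(\s{k})$. Indeed, if exactly one entry is $-\infty$, the difference is $\pm\infty\notin\ZZ$ under the stated conventions. Hence $\plog(r)+\s{h}=\s{k}$ coordinatewise, including the $-\infty$ coordinates. By \eqref{eq: lgcd scalar} and the bijection of \Cref{thm: lgcd structure}, $rH=K$.

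(c)$\Leftrightarrow$(d) is a direct unwinding of the definitions. $\dist(\s{h},\s{k})<\infty$ means every coordinate distance is finite, which says the $-\infty$ positions agree, and only finitely many coordinates are nonzero, which says the sequences agree off a finite set. This is exactly the statement that $\s{h}-\s{k}$ is an integer sequence of finite support, matching the remark after the definition of distance.

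For (d)$\Rightarrow$(e), use \eqref{eq: lgcd cap}: $\lgcd(H\cap K)=\max(\s{h},\s{k})$. Note $H\cap K\neq\{0\}$, since $\max(\s{h},\s{k})$ is still eventually nonpositive. Coordinatewise, $\dist(\max(\s{h},\s{k}),\s{h})\le\dist(\s{k},\s{h})$, and similarly for $\s{k}$. So $\dist(H\cap K,H)$ and $\dist(H\cap K,K)$ are finite, and by the index proposition ($|K:H|<\infty\Leftrightarrow\dist(H,K)<\infty$ for nested subgroups) both indices are finite.

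For (e)$\Rightarrow$(d), commensurability and the same index proposition give $\dist(\max(\s{h},\s{k}),\s{h})<\infty$ and $\dist(\max(\s{h},\s{k}),\s{k})<\infty$. The triangle inequality for $\dist$ on $\Zb$ (checked coordinatewise, with the $+\infty$ cases trivial) then gives $\dist(\s{h},\s{k})<\infty$.

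The main obstacle is bookkeeping with the $\pm\infty$ conventions in (c) and in the triangle inequality. Specifically, one must verify that $\s{h}-\s{k}\in\Seq_0(\ZZ)$ genuinely forces equal $(-\infty)$-supports, and that $\dist$ on $\Zb$ satisfies the triangle inequality. Everything else is a direct appeal to \Cref{lem: Hom}, \eqref{eq: lgcd scalar}, \eqref{eq: lgcd cap}, and the index formula.
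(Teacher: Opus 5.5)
Your proof is correct and follows essentially the paper's route: \Cref{lem: Hom}, the formulas \eqref{eq: lgcd scalar} and \eqref{eq: lgcd cap}, and the index formula. You organize it as a chain of bi-implications, make explicit the step (c)$\Rightarrow$(b) that the paper uses implicitly at the end of its (e)$\Rightarrow$(a), and close (e)$\Rightarrow$(d) by the triangle inequality instead of subtracting $\lgcd(H\cap K)-\lgcd(H)$ and $\lgcd(H\cap K)-\lgcd(K)$. One small slip, which does not affect the conclusion that $\lgcd(H)-\lgcd(K)\in\Seq_0(\ZZ)$: in (b)$\Rightarrow$(c), $\lgcd(H)-\lgcd(K)$ agrees with $-\plog(|r|)$ only off $\supp_{-\infty}(H)$, since by convention it is $0$ there.
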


\begin{proof}
$[\ref{item: isom}\Rightarrow\ref{item: K = rH}]$
Let $\varphi \colon H\to K$ be an isomorphism. Since $K\leqslant \QQ$ we may regard $\varphi$ as a group homomorphism $\varphi\colon H\to \QQ$. By \Cref{lem: Hom}, we know that $\varphi =\mu_{r\mid H}$ for some $r\in \QQ$. Since $\varphi$ is injective, $r \neq 0$. That is, $K=\mu_r(H)=rH$ for some $r\in \QQ^*$.

$[\ref{item: K = rH}\Rightarrow\ref{item: lgcd - lgcd}]$ If $K=rH$ then $K=|r|H$ and 
 \[
\lgcd(K)=\lgcd(|r|H)=\plog(|r|)+\lgcd(H).
 \]
Since $|r|\in \QQ^+$, $\plog(|r|) \in \Seq_0(\ZZ)$, the sequences $\lgcd(H)$ and $\lgcd(K)$ differ at only finitely many coordinates, and therefore $\lgcd(H)-\lgcd(K)\in\Seq_0(\ZZ)$.

$[\ref{item: lgcd - lgcd}\Rightarrow\ref{item: dist}]$ 
If $\lgcd(H) - \lgcd(K) = \s{s} \in \Seq_0(\ZZ)$ then
 \begin{equation}\label{eq: finite sum}
\dist(H,K) \,=\, \sum_{p\in\PP} \dist(\lgcd_p(H),\lgcd_p(K)) \,=\, \sum_{p\in\PP} |\s{s}(p)| \,.
 \end{equation}
Since $\supp(s)$ is finite and $\s{s}(p) \in \ZZ$ for every $p \in \supp(\s{s})$, the sum in \eqref{eq: finite sum} is finite and $\dist(H,K)<+\infty$, as claimed.

$[\ref{item: dist}\Rightarrow\ref{item: commensurable}]$
Note that if $\dist(H,K)<+\infty$ then 
 \[
\max(\lgcd(H),\lgcd(K)) - \lgcd(H) \in \Seq_0(\NN)
 \]
since this sequence is coordinatewise nonnegative and its support is contained in $\set{p\in\PP\st \lgcd_p(H)\neq\lgcd_p(K)}$, which is finite because $\dist(H,K)<+\infty$. Then using the expressions \eqref{eq: lgcd cap} for the intersection and \eqref{eq: index} for the index we have
 \begin{align*}
|H:H \cap K| 
&\,=\, \PP^{\max(\lgcd(H),\lgcd(K))-\lgcd(H)}<+\infty \,,
 \end{align*}
and similarly for $|K:H\cap K|$. Hence $H$ and $K$ are commensurable, as claimed.

$[\ref{item: commensurable}\Rightarrow\ref{item: isom}]$
Let $L=H\cap K$, and write $\s{h}=\lgcd(H)$, $\s{k}=\lgcd(K)$, and $\s{l}=\lgcd(L)$. Since $H$ and $K$ are commensurable, from the index formula \eqref{eq: index} we have that $\s{l}-\s{h}, \s{l}-\s{k}\in \Seq_0(\NN)$. Substracting, we obtain $\s{h}-\s{k}=\s{r}\in \Seq_0(\ZZ)$. That is, $H=rK$, with $r=\PP^{\,\s{r}}\in \QQ^*$ and hence $H$ and $K$ are isomorphic. This completes the proof.
\end{proof}

\begin{cor}
Every nontrivial subgroup $H\leqslant (\QQ,+)$ is isomorphic to a subgroup $K\leqslant (\QQ,+)$ such that $\lgcd(K) \preceq \s{0}$, that is, with $\ZZ\leqslant K$. \qed
\end{cor}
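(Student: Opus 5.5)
The statement is the last corollary: every nontrivial $H\leqslant(\QQ,+)$ is isomorphic to some $K\leqslant(\QQ,+)$ with $\lgcd(K)\preceq\s{0}$, equivalently $\ZZ\leqslant K$.

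The plan is to take $K=rH$ for a suitable $r\in\QQ^+$. By \Cref{prop: isom}, any such $K$ is isomorphic to $H$. The formula \eqref{eq: lgcd scalar} then reduces the problem to choosing $\plog(r)$.

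Write $\s{h}=\lgcd(H)\in\Seq_{\leq0}(\Zb)$. By \Cref{lem: lgcd eventually negative}, the positive support $P=\supp_{>0}(\s{h})$ is finite, and every $p\in P$ has $\s{h}(p)\in\ZZ$ with $\s{h}(p)>0$. Define $\s{e}\in\Seq_0(\ZZ)$ by
\[
\s{e}(p)=\begin{cases}-\s{h}(p), & p\in P,\\ 0, & p\notin P,\end{cases}
\]
and set $r=\PP^{\,\s{e}}\in\QQ^+$. This is the inverse of the positive integer $\prod_{p\in P}p^{\s{h}(p)}$.

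By \eqref{eq: lgcd scalar}, $\lgcd(rH)=\s{e}+\s{h}$, and we check this coordinatewise:
\begin{itemize}
\item for $p\in P$, the entry is $0$;
\item for $p\notin P$, the entry is $\s{h}(p)\leq 0$, with $-\infty$ allowed, since $-\infty$ is absorbing.
\end{itemize}
Hence $\lgcd(rH)\preceq\s{0}$.

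For the equivalence with $\ZZ\leqslant K$, use \eqref{eq: subgroup inclusion} together with $\lgcd(\ZZ)=\s{0}$: we have $\ZZ\leqslant K$ if and only if $\lgcd(K)\preceq\s{0}$. This is exactly item (i) of the lemma stating that $1\in H$ if and only if $\lgcd(H)\preceq\s{0}$, since $1\in K$ is equivalent to $\ZZ\leqslant K$.

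There is no real obstacle. The only point needing care is that $\s{e}$ must have finite support and integer entries, so that $r\in\QQ^+$ is well defined. This is exactly what \Cref{lem: lgcd eventually negative} provides, together with the observation that positive entries of $\s{h}$ are finite.
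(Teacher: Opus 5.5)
Your proposal is correct and is essentially the argument the paper intends (the corollary is stated without proof): it follows from \Cref{prop: isom} with $K=rH$ and formula \eqref{eq: lgcd scalar}, choosing $r$ to cancel the finitely many positive entries of $\lgcd(H)$, so that $\lgcd(K)=\min(\lgcd(H),\s{0})$. Your identification of $\lgcd(K)\preceq\s{0}$ with $\ZZ\leqslant K$, via \eqref{eq: subgroup inclusion} and $\lgcd(\ZZ)=\s{0}$, is also right.
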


The previous characterization shows that the isomorphism type of a subgroup $H\leqslant \QQ$ is determined by the tail behavior of the sequence $\lgcd(H)$, that is, by its class modulo $\Seq_0(\ZZ)$. This can also be seen at the level of natural presentations.

Let $\s{s}\in\Seq_{\leq 0}(\Zb)$, and consider the corresponding subgroup $H_{\s{s}} =\pm\PP^{\,\fSucc\mathbf{s}}\cup\{0\}\leqslant \QQ$. Choose a sequence
 $$
\mathbf{e_0} \succeq \mathbf{e_1} \succeq \mathbf{e_2} \succeq \cdots
 $$
in $\fSucc\mathbf{s}$ which is coinitial (\ie such that $\forall \mathbf{e} \in \fSucc \mathbf{s}\ \exists i\in \NN \ \text{such that}\ \s{e_i}\preceq \mathbf{e}$). Equivalently, the associated cyclic subgroups form an increasing chain
 $$
\gen{\PP^{\,\mathbf{e_0}}} \leqslant \gen{\PP^{\,\mathbf{e_1}}} \leqslant \gen{\PP^{\,\mathbf{e_2}}} \leqslant \cdots \leqslant H_{\s{s}}
 $$
whose union is the whole $H_{\mathbf{s}}$. 
For each $i\in \NN$, consider the natural numbers $m_i =\PP^{\,\s{e_i}- \s{e_{i+1}}}\in\NN^+$.

\begin{thm} \label{thm: H pres}
Let $\s{s}\in\Seq_{\leq 0}(\Zb)$. With the above notation, the subgroup $H_{\s{s}}$ admits the presentation
 \begin{equation}\label{eq: pres Hs}
H_{\mathbf{s}} \cong \pres{x_i \ (i \in \NN)}{x_i= x_{i+1}^{m_i}\ (i\in \NN)}.
 \end{equation}
\end{thm}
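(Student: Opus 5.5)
We will realize the abstract group $G=\pres{x_i \ (i\in\NN)}{x_i=x_{i+1}^{m_i}\ (i\in\NN)}$ inside $\QQ$ by the assignment $x_i\mapsto g_i:=\PP^{\,\s{e_i}}$, and check that this gives an isomorphism onto $H_{\s{s}}$. Throughout, $G$ is written additively and is abelian (we include commutators in the presentation, or note that every finitely generated subgroup of the direct limit below is cyclic).

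\textbf{Well-defined map.} Since $\s{e_i}\succeq\s{e_{i+1}}$, the difference $\s{e_i}-\s{e_{i+1}}$ lies in $\Seq_0(\NN)$ and $m_i\in\NN^+$. The relation $x_i=m_ix_{i+1}$ is respected because
\[
m_i g_{i+1}=\PP^{\,\s{e_i}-\s{e_{i+1}}}\,\PP^{\,\s{e_{i+1}}}=\PP^{\,\s{e_i}}=g_i,
\]
by \Cref{prop: plog(Q)}.\ref{item: plog(rs) = plog(r) + plog(s)}. Hence we obtain a homomorphism $\varphi\colon G\to\QQ$.

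\textbf{Surjectivity onto $H_{\s{s}}$.} The image of $\varphi$ is $\bigcup_i\gen{g_i}$. Each $\s{e_i}$ lies in $\fSucc\s{s}$, so every $g_i\in H_{\s{s}}$. Conversely, take $x\in H_{\s{s}}\setmin\{0\}$. Then $\plog(|x|)\in\fSucc\s{s}$, and coinitiality gives an index $i$ with $\s{e_i}\preceq\plog(|x|)$. By \Cref{prop: plog(Q)}.\ref{item: order log(Q)}, $|x|/g_i\in\NN^+$, so $x\in\gen{g_i}$. Thus the chain of cyclic subgroups $\gen{g_i}$ exhausts $H_{\s{s}}$, and $\varphi(G)=H_{\s{s}}$.

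\textbf{Injectivity.} This is the main point. In $G$ the relations give $x_i=m_ix_{i+1}$, so $\gen{x_i}\le\gen{x_{i+1}}$. Every element of $G$ is a finite word in finitely many generators, and all of these lie in $\gen{x_N}$ for $N$ larger than every index used. Hence every element has the form $a x_N$ with $a\in\ZZ$. If $\varphi(ax_N)=ag_N=0$, then $a=0$, since $g_N\neq0$. Therefore $\varphi$ is injective.

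One small point needs care: the presentation is read as the direct limit of the system $\ZZ\xrightarrow{\cdot m_0}\ZZ\xrightarrow{\cdot m_1}\cdots$. Every finitely generated subgroup of this limit is cyclic, so abelianness is automatic, and the identification above is consistent.
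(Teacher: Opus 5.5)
Your proof is correct and follows the paper's argument essentially step by step. You send $x_i\mapsto\PP^{\,\s{e_i}}$, check the relations, get surjectivity from coinitiality, and get injectivity by writing every element as a power of a single generator $x_N$. Two remarks: the hedge about adding commutators to the presentation is unnecessary, and so is the appeal to the direct limit. The observation that every word in $x_0,\ldots,x_N$ equals a power of $x_N$ already holds in the non-abelian presented group and shows it is abelian, exactly as the paper notes after its proof.
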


\begin{proof}
Consider the abstract group $G$ presented as in~\eqref{eq: pres Hs} and define a homomorphism $\Phi\colon G\to H_{\mathbf{s}}$, $x_i\mapsto \PP^{\mathbf{\,e_i}}$. This is well defined because, for every $i\in\NN$, $m_i\PP^{\s{e_{i+1}}}=\PP^{\s{e_i}-\s{e_{i+1}}}\PP^{\s{e_{i+1}}} =\PP^{\s{e_i}}$, and the relation $x_i=x_{i+1}^{m_i}$ is satisfied in $H_{\mathbf{s}}\leqslant \QQ$.

Let us prove that $\Phi$ is surjective. Since the sequence $(\s{e_i})_{i\in\NN}$ is coinitial in $\fSucc\mathbf{s}$, for every $\mathbf{e}\in\fSucc\mathbf{s}$ there exists $i\in\NN$ such that $\mathbf{e_i}\preceq\mathbf{e}$. Hence, $\mathbf{e}-\mathbf{e_i}\in\Seq_0(\NN)$,
and so $\PP^{\mathbf{e}} =\PP^{\mathbf{e}-\mathbf{e_i}}\PP^{\mathbf{e_i}}\in \gen{ \PP^{\mathbf{e_i}} }\leqslant \im\Phi$. Since $H_{\mathbf{s}}=\pm\PP^{\,\fSucc\mathbf{s}}\cup\{0\}$, we conclude that $\Phi$ is onto.

It remains to prove that $\Phi$ is injective. For $i<j$, repeated use of the defining relations gives
 $$
x_i=x_j^{m_i m_{i+1}\cdots m_{j-1}}.
 $$
Therefore, every element of $G$ can be written as a power of some generator $x_j$, with $j$ big enough. Now suppose that $g\in\ker\Phi$. Write $g=x_j^z$. Then $0=\Phi(g)=z\,\PP^{\s{e_j}}\in\QQ$. Since $\PP^{\s{e_j}}\neq0$, it follows that $z=0$. Hence $g=\trivial$. Thus $\ker\Phi=\trivial$, and $\Phi$ is injective.

Therefore $\Phi$ is an isomorphism, and the claimed result follows.
\end{proof}

Note that, in the proof above, $G$ is an abelian group although no commutativity relations are needed in the multiplicative presentation~\eqref{eq: pres Hs}: indeed, if $i<j$, then, as seen in the previous proof, repeated use of the defining relations gives $x_i=x_j^{m_i m_{i+1}\cdots m_{j-1}}$; therefore, $x_i$ and $x_j$ do commute. 

\begin{rem}
\Cref{thm: H pres} generalizes the well-known presentation
\[\pres{x_i \ (i \in \NN)}{x_i= x_{i+1}^{i+1}\ (i\in \NN)}
\]
of the additive rational numbers (see, for example, \cite[Chapter 5, Section 7]{johnsonPresentationsGroups1997})  to arbitrary subgroups of $(\QQ,+)$. Moreover, the same pattern applies to the presentations above for an arbitrary subgroup $H\leqslant\QQ$, with the successive factors $i+1$ replaced by those determined by the chosen coinitial sequence for $H$.
\end{rem}

Let us remark that $\Seq(\ZZ) =\ZZ^\PP =\prod_{p\in \PP} \ZZ$ is a $\ZZ$-module, that $\Seq_0(\ZZ) =\bigoplus_{p\in \PP} \ZZ$ is a free $\ZZ$-submodule, and hence the quotient $\Seq(\ZZ)/\Seq_0(\ZZ)$ is well defined. Moreover, this quotient construction restricts to $\Seq_{\leq0}(\ZZ)/\Seq_0(\ZZ)$, and further extends to $\Seq_{\leq0}(\Zb) / \Seq_0(\ZZ)$ in the natural way.  

\begin{thm}
The family of isomorphic classes of nontrivial subgroups of $(\QQ,+)$ is in bijection with $\Seq_{\leq0}(\Zb) / \Seq_0(\ZZ)$. In particular, there are uncountably many classes of nonisomorphic subgroups in $(\QQ,+)$.
\end{thm}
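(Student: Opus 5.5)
The plan is to compose the bijection of \Cref{thm: lgcd structure} with the characterization of isomorphism in \Cref{prop: isom}. On $\Seq_{\leq0}(\Zb)$ define $\s{s}\sim\s{r}$ if and only if $\s{s}-\s{r}\in\Seq_0(\ZZ)$, with the convention $(-\infty)-(-\infty)=0$. I first check that $\sim$ is an equivalence relation. Two sequences are related exactly when they have the same $(-\infty)$-support and their finite entries differ at only finitely many primes, and both conditions are reflexive, symmetric and transitive. This relation is precisely what we mean by the quotient $\Seq_{\leq0}(\Zb)/\Seq_0(\ZZ)$. Equivalently, $\s{r}\sim\s{s}$ if and only if $\s{r}=\s{s}+\s{e}$ for some $\s{e}\in\Seq_0(\ZZ)$, since $-\infty$ is absorbing under addition. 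Moreover, adding a finitely supported $\s{e}$ keeps a sequence eventually nonpositive, so the additive action of $\Seq_0(\ZZ)$ preserves $\Seq_{\leq0}(\Zb)$ and the quotient is well defined.

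Next consider the map $H\mapsto[\lgcd(H)]$ from $\Sgp^*(\QQ,+)$ to the quotient. By \Cref{prop: isom}, conditions (a) and (c), we have $H\isom K$ if and only if $\lgcd(H)-\lgcd(K)\in\Seq_0(\ZZ)$. The forward direction of this equivalence shows that the map is constant on isomorphism classes, so it descends to a map on those classes. The backward direction shows that the descended map is injective. Surjectivity follows from the surjectivity in \Cref{thm: lgcd structure}: every $\s{s}$ equals $\lgcd(H_{\s{s}})$. This gives the bijection.

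For the uncountability I exhibit uncountably many pairwise inequivalent sequences. For each $P\subseteq\PP$, take the localization sequence $-\infty\cdot\mathbf{1}_P$. Distinct sets $P$ give distinct $(-\infty)$-supports, hence inequivalent sequences. Since $\PP$ is infinite, there are $2^{\aleph_0}$ such sets $P$. Alternatively, one can use $0/(-1)$ sequences $-\mathbf{1}_P$ and take a family of subsets of $\PP$ with pairwise infinite symmetric differences. The family of localizations is simpler, so I will use it.

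The only delicate step is the bookkeeping with $-\infty$ in the differences. I will handle it by stating explicitly the characterization of $\sim$ as same vertical tail plus same horizontal tail, which matches the remark after the definition of $\dist$ and condition (d) of \Cref{prop: isom}.
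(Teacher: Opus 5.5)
Your proposal is correct and follows the paper's proof: the bijection comes from combining \Cref{thm: lgcd structure} with \Cref{prop: isom}, and uncountability from the family $\{-\infty\cdot\mathbf{1}_S : S\subseteq\PP\}$, whose members are separated by their $(-\infty)$-supports. Your extra check that the quotient is well defined (the relation is the orbit relation of the additive action of $\Seq_0(\ZZ)$, i.e.\ same vertical and horizontal tails) is a detail the paper leaves implicit.
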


\begin{proof}
    The first claim is just a reformulation of \Cref{prop: isom}. For the second claim, it is enough to realize that
    all the elements in the uncountable family $\set{{-\infty \cdot \mathbf{1}_S} \st S \subseteq \PP}$ are pairwise nonisomorphic since they have different ($-\infty$)-supports.
\end{proof}

As a particular case of \Cref{prop: isom}, we obtain a straightforward characterization of isomorphisms between subgroups of $(\QQ,+)$ with finite support. 

\begin{rem}
    Note that a subgroup $H$ of $(\QQ,+)$ has finite support if and only if it is of one of the following forms:
\begin{enumerate}[(i)]
    \item $H = r \,\ZZ$, with $r \in \QQ^+$;
    \item $H = r\, \ZZ\!\left[\frac{1}{n}\right]$, with $r \in \QQ^+$ and $n \in \NN_{\geq 2}$. 
\end{enumerate}
\end{rem}

\begin{cor}
Let $n,m \in \NN_{\geq 2}$ and $r,s \in \QQ^{+}$. Then
\[
r\,\ZZ\!\left[\tfrac{1}{n}\right] \isom s\,\ZZ\!\left[\tfrac{1}{m}\right]
\;\Leftrightarrow\;
\supp(n) = \supp(m).
\]
Moreover,
\[
\ZZ\!\left[\tfrac{1}{n}\right] \isom \ZZ\!\left[\tfrac{1}{m}\right]
\;\Leftrightarrow\;
\ZZ\!\left[\tfrac{1}{n}\right] = \ZZ\!\left[\tfrac{1}{m}\right]. \tag*{\qed}
\]
\end{cor}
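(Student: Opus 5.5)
The plan is to reduce both equivalences to \Cref{prop: isom} and the description of finite-support subgroups in the preceding remark. First I compute the relevant logarithmic gcds. By \eqref{eq: lgcd scalar} and the localization example,
\[
\lgcd\bigl(r\,\ZZ[\tfrac1n]\bigr)=\plog(r)+\bigl(-\infty\cdot\mathbf{1}_{\supp(n)}\bigr),
\]
which has $(-\infty)$-support exactly $\supp(n)$. Its finite entries agree with those of $\plog(r)$, so they are finitely supported integers. The analogous formula holds for $s\,\ZZ[\frac1m]$, with $(-\infty)$-support $\supp(m)$.

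For the first equivalence I will use the criterion $[\ref{item: isom}\Leftrightarrow\ref{item: dist}]$ of \Cref{prop: isom}. The two subgroups are isomorphic if and only if the distance between their $\lgcd$'s is finite. By the earlier observation, this holds exactly when the two sequences have the same vertical and horizontal tails.

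\begin{itemize}
\item \emph{Horizontal tails.} Both sequences differ from $\s{0}$ at only finitely many primes, namely at most $\supp(r)\cup\supp(n)$, respectively $\supp(s)\cup\supp(m)$. Hence they differ from each other at finitely many primes, so the horizontal tails always coincide.
\item \emph{Vertical tails.} The vertical tails are $\supp(n)$ and $\supp(m)$.
\end{itemize}

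So isomorphism is equivalent to $\supp(n)=\supp(m)$. The only point needing care is the convention $(-\infty)-(-\infty)=0$ in $\dist$. It makes coordinates where both entries are $-\infty$ contribute $0$, and coordinates where exactly one entry is $-\infty$ contribute $+\infty$. This is precisely what the tail criterion uses.

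For the second equivalence, the implication from right to left is trivial. For the converse, I apply the first part with $r=s=1$ to obtain $\supp(n)=\supp(m)$. Then
\[
\lgcd(\ZZ[\tfrac1n])=-\infty\cdot\mathbf{1}_{\supp(n)}=-\infty\cdot\mathbf{1}_{\supp(m)}=\lgcd(\ZZ[\tfrac1m]).
\]
The bijectivity in \Cref{thm: lgcd structure} (equivalently \Cref{cor: lgcd R = lgcd H}) then gives equality of the subgroups.

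I do not expect a genuine obstacle here. The only step to state carefully is the tail bookkeeping for $\plog(r)+(-\infty\cdot\mathbf{1}_P)$, where $-\infty$ is absorbing.
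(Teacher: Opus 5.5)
Your proof is correct and follows the route the paper intends: the paper gives no separate proof and presents the corollary as a particular case of \Cref{prop: isom}. You carry this out by computing $\lgcd(r\,\ZZ[\frac1n])=\plog(r)+(-\infty\cdot\mathbf{1}_{\supp(n)})$, observing that the horizontal tails always agree so that finiteness of the distance reduces to $\supp(n)=\supp(m)$, and then deducing the second equivalence from the bijectivity of $H\mapsto\lgcd(H)$.
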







\begin{rem}
It is clear from the previous results, that the algorithmic landscape for subgroups of $(\QQ,+)$ is sharply polarized. In the usual setting of algorithmic group theory, where the input subgroups are finitely generated, every subgroup of $\QQ$ is cyclic. Consequently, isomorphism, membership, and subgroup inclusion reduce to elementary computations with rational numbers (or, equivalently, integer sequences with finite support) and are therefore essentially trivial. For arbitrary, non-(finitely generated) subgroups, on the other side, no uniform algorithmic theory can be expected. Indeed, most subgroups of $\QQ$ are not computable, and hence cannot even be presented as admissible inputs for a decision procedure. 
\end{rem}

\section{Intersection configurations}\label{sec: intersections}

In this section, we shall use our logarithmic description to unveil the behavior of intersections of subgroups of $(\QQ,+)$ with respect to rank and finite generation.

The study of finite generation of subgroup intersections goes back to Howson~\cite{howsonIntersectionFinitelyGenerated1954}, who proved that the intersection of any two finitely generated subgroups of a free group is again finitely generated. A convenient geometric and algorithmic framework for studying this and related facts is the theory of Stallings automata (see~\cite{stallingsTopologyFiniteGraphs1983} and~\cite{delgadoStallingsAutomata2024}).
Beyond free groups, this closure condition came to be studied as a group-theoretic property in its own right, now usually called the \emph{Howson property} or \emph{finitely generated intersection property (f.g.i.p.)}.


\begin{defn}
A group is said to satisfy the \defin{Howson property} (to be \defin{Howson}) if the intersection of any two (and hence finitely many) finitely generated subgroups is again finitely generated.
\end{defn}

As all abelian groups, $(\QQ,+)$ and all its subgroups are obviously Howson. However, this property can be refined in two different directions: 
\begin{enumerate*}[(i)]
\item by considering finite multiple intersections, as introduced in~\cite{delgadoIntersectionConfigurationsFree2024} through the notion of intersection configuration;
\item and by recording the specific ranks of the intersections instead of just their finite generation. 
\end{enumerate*}
Putting both refinements together we reach the notion of \defin{rank intersection configuration}, defined below.

Let $\bN =\NN\cup\set{\infty}$. For $k\in \NN$, let $[k]=\set{i\in \NN \st 1\leq i\leq k}$, and let $\Pk =2^{[k]}\setmin \set{\varnothing}$, regarded as an ordered set under inclusion. 


\begin{defn}
A \defin{rank intersection configuration of size} $k\geq 1$ is a map
$\chi \colon \Pk \to \bN$. If we collapse all the finite values to a single one, called $\mathsf{F}$, we recover the notion of \defin{binary intersection configuration} $\widetilde{\chi}\colon \Pk \to \set{\mathsf{F},\infty}$, introduced in~\cite{delgadoIntersectionConfigurationsFree2024}.\footnote{In~\cite{delgadoIntersectionConfigurationsFree2024}, finitely generated groups are represented by $0$ and non-(finitely generated) groups by $1$; we change notation to $\mathsf{F}$ and $\infty$ to avoid confusion with ranks.} The generic term \defin{intersection configuration} refers to either type.
\end{defn}

\begin{defn}
An intersection configuration $\chi$ of size $k$ is \defin{realizable} in a group~$G$ if there exist subgroups $H_1,\ldots,H_k \leqslant G$ such that $\rk(\bigcap_{i\in I} H_i)=\chi (I)$, for every $I\in\Pk$.\footnote{For binary configurations, we use the convention $\rk(H)=\mathsf{F}$ to mean that $\rk(H)<\infty$.}
\end{defn}

We shall consider $\bN$ with its natural order, and $\set{\mathsf{F},\infty}$ with the order inherited from $\bN$, namely $\mathsf{F}<\infty$.

\begin{defn}
For any $r$ in the codomain of an intersection configuration $\chi$, we say that $I\in \Pk$ is \defin{$r$-maximal} (\resp \defin{$r$-minimal}) for $\chi$ if it is a $\subseteq$-maximal (\resp $\subseteq$-minimal) element of $\chi^{-1}(r) = \set{J \st \chi(J) = r}$. We denote by $\mathcal{M}_r(\chi)$ the set of $r$-maximal subsets for $\chi$.
\end{defn}

A convenient graphical representation of intersection configurations is through labeled hypercubes. In our hypercube diagrams, inclusion of indices will be directed downwards; thus, $r$-maximal subsets are bottom $r$-sets, and $r$-minimal subsets are top $r$-sets. Moreover, we use black ($\infty$) and white ($\mathsf{F}$) to denote infinite and finite generation, respectively. For example, if $\chi$ is the configuration depicted in~\Cref{fig:cube}, then $\mathcal{M}_\infty(\chi)=\set{\set{1,2}, \set{2,3}}$.

\begin{figure}[H]
\centering
\begin{tikzpicture}[
    >=latex,
    state/.style={circle,draw,minimum size=2mm,inner sep=1.1pt},
    finite/.style={circle,draw,fill=white,minimum size=4mm,inner sep=0pt},
    infinite/.style={circle,draw,fill=black,minimum size=4mm,inner sep=0pt}
]

\begin{scope}
    \newcommand{\dx}{0.55}
    \newcommand{\dy}{0.52}

    \node (000) {$\varnothing$};

    \node (010) [below=\dy of 000] {\small$\scriptstyle\{2\}$};
    \node (100) [left=\dx of 010] {$\scriptstyle\{1\}$};
    \node (001) [right=\dx of 010] {$\scriptstyle\{3\}$};

    \node (110) [below=\dy of 100] {$\scriptstyle\{1,2\}$};
    \node (101) [below=\dy of 010] {$\scriptstyle\{1,3\}$};
    \node (011) [below=\dy of 001] {$\scriptstyle\{2,3\}$};

    \node (111) [below=\dy of 101] {$\scriptstyle\{1,2,3\}$};

    \path[->] (000) edge (100);
    \path[->] (000) edge (010);
    \path[->] (000) edge (001);

    \path[->] (100) edge (110);
    \path[->] (100) edge (101);
    \path[->] (010) edge (110);
    \path[->] (010) edge (011);
    \path[->] (001) edge (101);
    \path[->] (001) edge (011);

    \path[->] (110) edge (111);
    \path[->] (101) edge (111);
    \path[->] (011) edge (111);
\end{scope}

\begin{scope}[shift={(5,0)}]
    \newcommand{\dx}{0.85}
    \newcommand{\dy}{0.62}

    \node (m000) {$\varnothing$};

    \node[state] (m010) [below=\dy of m000] {$\scriptstyle\infty$};
    \node[state] (m100) [left=\dx of m010] {$\scriptstyle 3$};
    \node[state] (m001) [right=\dx of m010] {$\scriptstyle\infty$};

    \node[state] (m110) [below=\dy of m100] {$\scriptstyle\infty$};
    \node[state] (m101) [below=\dy of m010] {$\scriptstyle 5$};
    \node[state] (m011) [below=\dy of m001] {$\scriptstyle\infty$};

    \node[state] (m111) [below=\dy of m101] {$\scriptstyle 3$};

    \path[->] (m000) edge (m100);
    \path[->] (m000) edge (m010);
    \path[->] (m000) edge (m001);

    \path[->] (m100) edge (m110);
    \path[->] (m100) edge (m101);
    \path[->] (m010) edge (m110);
    \path[->] (m010) edge (m011);
    \path[->] (m001) edge (m101);
    \path[->] (m001) edge (m011);

    \path[->] (m110) edge (m111);
    \path[->] (m101) edge (m111);
    \path[->] (m011) edge (m111);
\end{scope}

\begin{scope}[shift={(10,0)}]
    \newcommand{\dx}{0.85}
    \newcommand{\dy}{0.62}

    \node (r000) {$\varnothing$};

    \node[infinite] (r010) [below=\dy of r000] {};
    \node[finite]   (r100) [left=\dx of r010] {};
    \node[infinite] (r001) [right=\dx of r010] {};

    \node[infinite] (r110) [below=\dy of r100] {};
    \node[finite]   (r101) [below=\dy of r010] {};
    \node[infinite] (r011) [below=\dy of r001] {};

    \node[finite]   (r111) [below=\dy of r101] {};

    \path[->] (r000) edge (r100);
    \path[->] (r000) edge (r010);
    \path[->] (r000) edge (r001);

    \path[->] (r100) edge (r110);
    \path[->] (r100) edge (r101);
    \path[->] (r010) edge (r110);
    \path[->] (r010) edge (r011);
    \path[->] (r001) edge (r101);
    \path[->] (r001) edge (r011);

    \path[->] (r110) edge (r111);
    \path[->] (r101) edge (r111);
    \path[->] (r011) edge (r111);
\end{scope}

\end{tikzpicture}
\caption{The lattice of subsets of $[3]$ on the left, a particular rank intersection configuration of size $3$ in the middle, and the associated binary configuration on the right, where black vertices denote infinitely generated intersections and white vertices denote finitely generated intersections}
\label{fig:cube}
\end{figure}

\begin{rem}
Note that, using this representation, an $r$-minimal (\resp $r$-maximal) subset of $\chi$ corresponds to an $r$-node in the hypercube which has no $r$-predecessors (\resp no $r$-sucessors); that is, no $r$-nodes above (below) it. 
\end{rem}

For $I\in \Pk$, we abbreviate $H_I :=\bigcap_{i \in I} H_i$. Observe that $H_{I\cup J}=H_I\cap H_J$ and hence, in the hypercube diagrams, index inclusions are directed downwards whereas subgroup inclusions are directed upwards.

\begin{defn}
An intersection configuration $\chi$ is said to be
 \begin{enumerate}[(i)]
\item \defin{decreasing} if, for all $I,J\in\Pk$, $I\subseteq J$ implies $\chi(I)\geq \chi(J)$ (\ie the values of $\chi$ decrease as we go down through the hypercube representation).
\item \defin{Howson} if, for every $I,J\in\Pk$, $\chi(I)\neq\infty$ and $\chi(J)\neq\infty$ imply $\chi(I\cup J)\neq\infty$.
 \end{enumerate}
\end{defn}

Note that a binary configuration is decreasing if and only if its corresponding hypercube diagram does not contain the left pattern from~\Cref{fig:forbidden-patterns}, and \emph{Howson} if and only if it does not contain the right pattern (the snaked arrows denote directed paths and the black vertex is where both paths first meet). 

\begin{figure}[H]
\centering
\begin{tikzpicture}[
    >=latex,
    state/.style={circle,draw,minimum size=4mm},
    decoration={
        snake,
        segment length=2mm,
        amplitude=0.5mm,
        post length=1.5mm
    }
]

\begin{scope}[shift={(0,0)}]
    \node[state] (d0) {};
    \node[state,fill=black] (d1) [below=0.5 of d0] {};

    \path[->] (d0) edge (d1);

\end{scope}

\begin{scope}[shift={(4.5,0)}]
    \node (h1) {};
    \node[state] (h10) [left=1 of h1] {};
    \node[state] (h01) [right=1 of h1] {};
    \node[state,fill=black] (h11) [below=0.5 of h1] {};

    \path[->] (h10) edge[decorate] (h11);
    \path[->] (h01) edge[decorate] (h11);

\end{scope}

\end{tikzpicture}
\caption{Obstructions to decreasing (left) and Howson (right) binary configurations}
\label{fig:forbidden-patterns}
\end{figure}
Of course, every decreasing binary configuration is Howson (and not viceversa), since every Howson obstruction contains a decreasing obstruction. For example, the configuration(s) in \Cref{fig:cube} is Howson but not decreasing.
\medskip

The goal of this last section is to give an explicit characterization of the intersection configurations that are realizable in an arbitrary given subgroup $G\leqslant \QQ$. Since every subgroup of a finitely generated abelian group is finitely generated, with rank at most that of the ambient group, the result below follows.

\begin{lem} \label{lem: decreasing}
Every intersection configuration realizable in an abelian group is decreasing; in particular, it is Howson. \qed
\end{lem}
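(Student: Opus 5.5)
The idea is to compare ranks along the inclusion $H_J\leqslant H_I$, which holds whenever $I\subseteq J$, using the rank trichotomy for subgroups of $\QQ$ (or, more generally, a monotonicity property of rank in abelian groups). The Howson claim then follows from decreasingness by a short combinatorial step.

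\textbf{Decreasingness.} Let $\chi$ be realized in an abelian group $A$ by subgroups $H_1,\ldots,H_k$, and let $I\subseteq J$ in $\Pk$. Then $H_J=\bigcap_{j\in J}H_j\leqslant\bigcap_{i\in I}H_i=H_I$, so we need to show that $\rk(H_J)\leq\rk(H_I)$ whenever $H_J\leqslant H_I$ in an abelian group. There are two cases.
\begin{enumerate}[(i)]
\item If $\rk(H_I)=\infty$, there is nothing to prove.
\item If $\rk(H_I)=n<\infty$, then $H_I$ is a quotient of $\ZZ^n$. The subgroup $H_J$ is then the image of a subgroup of $\ZZ^n$. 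Subgroups of $\ZZ^n$ are free abelian of rank at most $n$, so $H_J$ is generated by at most $n$ elements, and $\rk(H_J)\leq n$.
\end{enumerate}
For the binary version, collapsing all finite values to $\mathsf F<\infty$ preserves the order, so $\widetilde\chi$ is also decreasing. In the case $A\leqslant\QQ$ relevant here, there is an even quicker route. By \Cref{rem: subgroup ranks}, ranks lie in $\{0,1,\infty\}$. By \Cref{cor: fg iff}, a subgroup of a cyclic group is cyclic or trivial, which gives the same inequality directly.

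\textbf{Howson property.} Suppose $\chi$ is decreasing and $\chi(I),\chi(J)\neq\infty$. Since $I\subseteq I\cup J$, decreasingness gives $\chi(I\cup J)\leq\chi(I)<\infty$, so $\chi(I\cup J)\neq\infty$. The same argument works for binary configurations.

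\textbf{Main obstacle.} The only substantive step is the monotonicity of rank under passing to subgroups in the finitely generated case. This rests on the standard fact that subgroups of $\ZZ^n$ are free of rank at most $n$, for example via the structure theorem or induction on $n$. In the rational setting, this step is trivialized by \Cref{cor: fg iff}.
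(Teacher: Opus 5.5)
Your proof is correct and follows the paper's own argument, which is the one-line observation that a subgroup of a finitely generated abelian group is finitely generated with rank at most that of the ambient group, applied to $H_J\leqslant H_I$ for $I\subseteq J$. Your passage through a surjection $\ZZ^n\to H_I$ just proves that standard fact, and the Howson property is deduced from decreasingness exactly as you do.
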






\medskip

Combining the logarithmic description of subgroup intersections in $(\QQ,+)$ with the criterion for finite generation, we can give an explicit characterization of the (both binary and rank) intersection configurations realizable in any given subgroup $G\leqslant \QQ$. First we need a couple of lemmas. 

\begin{lem} \label{lem: 2 props}
Let $H,K$ be subgroups of $(\QQ,+)$.
 \begin{enumerate}[(i)]
\item If $H\leqslant K$ then $\supp_{-\infty}(H) \subseteq \supp_{-\infty} (K)$.
\item \label{item: infty cap}
$\supp_{-\infty} (H\cap K)=\supp_{-\infty} (H)\cap \supp_{-\infty} (K)$.
 \end{enumerate} 
\end{lem}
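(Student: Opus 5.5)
Both parts follow from the coordinatewise description of inclusion and intersection in \Cref{thm: lgcd structure} and \Cref{prop: =}. The plan is to handle the nontrivial case first and then treat the trivial subgroup separately.

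For (i), assume $H$ and $K$ are nontrivial and $H\leqslant K$. By~\eqref{eq: subgroup inclusion} we have $\lgcd(K)\preceq\lgcd(H)$, so $\lgcd_p(K)\leq\lgcd_p(H)$ for every prime $p$. If $p\in\supp_{-\infty}(H)$, then $\lgcd_p(H)=-\infty$. Since $-\infty$ is the minimum of $\Zb$, this forces $\lgcd_p(K)=-\infty$, that is, $p\in\supp_{-\infty}(K)$.

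For (ii), assume first that $H\cap K\neq\{0\}$. Then~\eqref{eq: lgcd cap} gives $\lgcd_p(H\cap K)=\max(\lgcd_p(H),\lgcd_p(K))$ for every $p$. In $\Zb$, a maximum of two elements equals $-\infty$ exactly when both elements equal $-\infty$. This is precisely the claimed equality $\supp_{-\infty}(H\cap K)=\supp_{-\infty}(H)\cap\supp_{-\infty}(K)$.

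The only real obstacle is the degenerate case, because $\lgcd$ and $\supp$ were defined only for nontrivial subgroups. I would adopt the natural convention $\supp_{-\infty}(\{0\})=\varnothing$. With it, (i) is immediate whenever $H=\{0\}$. Moreover, $K=\{0\}$ forces $H=\{0\}$, so that case is covered as well.

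For (ii), if $H$ or $K$ is trivial, both sides of the equality are empty. It remains to show that $H\cap K=\{0\}$ cannot occur when $H$ and $K$ are both nontrivial. Indeed, take $0\neq a/b\in H$ and $0\neq c/d\in K$. Then $ac\in H\cap K$ and $ac\neq 0$, since $ac=b\cdot(a/b)\cdot c$ and $ac=d\cdot(c/d)\cdot a$, which are integer multiples of elements of $H$ and $K$ respectively. Alternatively, $\max(\lgcd(H),\lgcd(K))$ is again eventually nonpositive, so \Cref{thm: lgcd structure} shows it is the $\lgcd$ of a nontrivial subgroup. Hence the formula above covers every nontrivial pair, and the proof is complete.
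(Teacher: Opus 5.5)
Your proof is correct and follows the same route as the paper: for (i), inclusion reverses the coordinatewise order of the $\lgcd$'s, so a $-\infty$ entry of $\lgcd(H)$ forces one in $\lgcd(K)$; for (ii), you use $\lgcd(H\cap K)=\max(\lgcd(H),\lgcd(K))$ together with the fact that $\max(a,b)=-\infty$ if and only if $a=b=-\infty$. Your extra treatment of the trivial subgroup, via the convention $\supp_{-\infty}(\{0\})=\varnothing$ (which the paper itself adopts in the proof of \Cref{thm: main}), and your check that $H\cap K\neq\{0\}$ for nontrivial $H,K$ (the content of \Cref{rem: nontrivial}) make explicit what the paper's proof leaves implicit.
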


\begin{proof}
If $H$ is a subgroup of $K$, then $\lgcd_p(K)\leqslant \lgcd_p(H)$
 for every $p\in\PP$. Hence, if $\lgcd_p(H)=-\infty$, then necessarily
$\lgcd_p(K)=-\infty$, proving the first claim. For the second claim, recall that $\lgcd(H\cap K)=\max(\lgcd(H),\lgcd(K))$. Since $\max(a,b)=-\infty$ if and only if $a=b=-\infty$, the claim follows.
\end{proof}

\begin{lem} \label{lem: finite support}
Let $H\leqslant K\leqslant (\QQ,+)$. If $K$ has finite support then so does $H$. 
\end{lem}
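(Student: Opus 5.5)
The plan is to argue directly from the order-reversing correspondence of \Cref{thm: lgcd structure}, with the trivial subgroup handled separately. If $H=\{0\}$ there is nothing to prove: it has no nonzero elements, and its support can be regarded as empty by convention. So I will assume $\{0\}\neq H\leqslant K$, in which case $K$ is also nontrivial and both $\lgcd(H)$ and $\lgcd(K)$ are defined elements of $\Seq_{\leq 0}(\Zb)$.

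By \eqref{eq: subgroup inclusion}, the inclusion $H\leqslant K$ gives $\lgcd(K)\preceq\lgcd(H)$. I will then split $\supp(H)=\supp(\lgcd(H))$ into its positive part $\supp_{>0}(H)$ and its negative part (the primes where $\lgcd_p(H)<0$, including those where it equals $-\infty$), and show each is finite.

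\textbf{Negative part.} If $\lgcd_p(H)<0$, then $\lgcd_p(K)\leq\lgcd_p(H)<0$. Hence $p\in\supp(K)$, so the negative part of $\supp(H)$ is contained in $\supp(K)$, which is finite by hypothesis.

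\textbf{Positive part.} Here I use \Cref{lem: lgcd eventually negative}: for any fixed $h\in H^+$, we have $\supp_{>0}(\lgcd(H))\subseteq\supp(\plog(h))$, and the latter is finite.

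Putting these together,
\[
\supp(H)\subseteq \supp(K)\cup\supp(\plog(h)),
\]
which is a union of two finite sets, so $H$ has finite support. There is no real obstacle here. The only point requiring care is that positive entries of $\lgcd(H)$ are not controlled by $\lgcd(K)$ through the inclusion, since the inequality $\lgcd(K)\preceq\lgcd(H)$ only bounds $\lgcd(H)$ from below. That is exactly why the eventual nonpositivity from \Cref{lem: lgcd eventually negative} is needed for the positive part.
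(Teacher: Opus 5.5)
Your proof is correct and follows essentially the same route as the paper. The inclusion $\lgcd(K)\preceq\lgcd(H)$ controls the negative entries of $\lgcd(H)$, and \Cref{lem: lgcd eventually negative} controls the positive ones; the paper phrases this as $0\geq\lgcd_p(H)\geq\lgcd_p(K)=0$ for all sufficiently large $p$, rather than as your explicit inclusion $\supp(H)\subseteq\supp(K)\cup\supp(\plog(h))$, and your separate treatment of $H=\{0\}$ is a harmless detail the paper leaves implicit.
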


\begin{proof}
Let $\s{h}=\lgcd(H)$ and $\s{k}=\lgcd(K)$. By~\eqref{eq: subgroup inclusion}, we have $\s{h}(p) \geq \s{k}(p)$, for every $p\in \PP$.
Then, it is enough to combine this with the following two facts:
 \begin{enumerate}[(i)]
\item by~\Cref{lem: lgcd eventually negative}, there exists $p_H \in\PP$ such that $\s{h}(p)\leq 0$, for every $p\geq p_H$; 
\item by hypotheses, there exists $p_K \in \PP$ such that $\s{k}(p)=0$, for every $p\geq p_K$.
 \end{enumerate}
Hence, if $\PP\ni p\geq\max(p_H,p_K)$, then $0\geq\s{h}(p) \geq\s{k}(p)=0$; therefore $\s{h}(p)=0$, and $H$ has finite support, as claimed.
\end{proof}

\begin{rem} \label{rem: fg iff no infty}
If $H\leqslant (\QQ,+)$ has finite support, then $H$ is finitely generated if and only if $\supp_{-\infty}(H) = \varnothing$. This follows easily from~\Cref{cor: fg iff}.
\end{rem}

\begin{rem}\label{rem: nontrivial}
The intersection of two (and so, finitely many) nontrivial subgroups of $(\QQ,+)$ is always nontrivial; this follows immediately from the logarithmic description of intersections, $\lgcd(H\cap K)=\max(\lgcd(H),\lgcd(K))$; see \eqref{eq: lgcd cap}. Note that this is a very special property of the additive group of rationals $(\mathbb{Q}. +)$.
\end{rem}


\begin{thm}\label{thm: main}
Let $G$ be a nontrivial subgroup of $(\QQ,+)$ and let $\chi \colon \Pk \to \bN$ be a rank intersection configuration. Then, $\chi$ is realizable in $G$ if and only if
 \begin{enumerate}[(i)]
\item\label{item: im rho} $\im(\chi) \subseteq \set{0,1,\infty}$,
\item\label{item: rho decreasing} $\chi$ is decreasing,
\item\label{item: 0 minimal} every $0$-minimal subset of $\chi$ is a singleton,
\item\label{item: max bound} if $G$ has finite support then $|\mathcal{M}_{\infty}(\chi)|\leq |\supp_{-\infty}(G)|$.
\end{enumerate}
\end{thm}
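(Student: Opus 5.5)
The proof has two directions. Necessity follows directly from the lemmas on supports. Sufficiency is an explicit construction of the sequences $\lgcd(H_i)$: we perturb a fixed finite-type sequence $\s{b}\succeq\lgcd(G)$ only on primes reserved for each $\infty$-maximal set.

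\emph{Necessity.} Suppose $H_1,\ldots,H_k\leqslant G$ realize $\chi$.
\begin{enumerate}[(i)]
\item Condition (i) is \Cref{rem: subgroup ranks}.
\item Condition (ii) is \Cref{lem: decreasing}.
\item For (iii), let $\chi(I)=0$, so $H_I=\{0\}$. By \Cref{rem: nontrivial}, a finite intersection of nontrivial subgroups is nontrivial, so $H_i=\{0\}$ for some $i\in I$. Then $\chi(\{i\})=0$ with $\{i\}\subseteq I$, and hence every $0$-minimal set is a singleton.
\item For (iv), assume $G$ has finite support. By \Cref{lem: finite support}, every $H_I$ has finite support. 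By \Cref{rem: fg iff no infty}, $\chi(I)=\infty$ if and only if $\supp_{-\infty}(H_I)\neq\varnothing$. For each $M\in\mathcal M_\infty(\chi)$ choose $p_M\in\supp_{-\infty}(H_M)$, which lies in $\supp_{-\infty}(G)$ by \Cref{lem: 2 props}(i). Suppose $p_M=p_{M'}$ with $M\neq M'$. By \Cref{lem: 2 props}(ii), $p_M\in\supp_{-\infty}(H_{M\cup M'})$, so $\chi(M\cup M')=\infty$. Since $M\cup M'$ strictly contains $M$ or $M'$, this contradicts maximality. Hence $M\mapsto p_M$ is injective, which gives (iv).
\end{enumerate}

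\emph{Sufficiency.} Let $Z=\{i\st \chi(\{i\})=0\}$ and set $H_i=\{0\}$ for $i\in Z$. By (ii) and (iii), $\chi(I)=0$ if and only if $I\cap Z\neq\varnothing$. For $I$ disjoint from $Z$, $\chi(I)\in\{1,\infty\}$. By decreasingness, $\chi(I)=\infty$ if and only if $I\subseteq M$ for some $M\in\mathcal M_\infty(\chi)$; note that every such $M$ is disjoint from $Z$.

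Let $\s{s}=\lgcd(G)$ and $\s{b}=\max(\s{s},\s{0})$. This sequence is of finite type: its positive support is finite by \Cref{lem: lgcd eventually negative}, and it has no $-\infty$ entries. For each $M\in\mathcal M_\infty(\chi)$ we reserve a set $P_M$ of primes, with the sets $P_M$ pairwise disjoint:
\begin{itemize}
\item If $G$ has finite support, (iv) lets us take $P_M=\{p_M\}$ with distinct $p_M\in\supp_{-\infty}(G)$.
\item If $G$ has infinite support, the set $N=\{p\st \s{s}(p)<0\}$ is infinite, since the positive support of $\s{s}$ is finite. We partition $N$ into $|\mathcal M_\infty(\chi)|$ pairwise disjoint infinite sets $P_M$.
\end{itemize}
For $i\notin Z$, define $\s{b_i}(p)=\s{s}(p)$ if $p\in P_M$ for some $M\ni i$, and $\s{b_i}(p)=\s{b}(p)$ otherwise. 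Put $H_i=H_{\s{b_i}}$. Since $\s{b_i}\succeq\s{s}$, each $H_i\leqslant G$ by \Cref{thm: lgcd structure}, and each $H_i$ is nontrivial.

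\emph{Checking the construction.} For $I$ disjoint from $Z$, \Cref{prop: =} gives $\lgcd(H_I)=\max_{i\in I}\s{b_i}$. This equals $\s{s}$ on $\bigcup_{M\supseteq I}P_M$ and equals $\s{b}$ elsewhere. So it has infinite type exactly when some $P_M$ with $I\subseteq M$ is involved:
\begin{itemize}
\item in the finite-support case, this shows up as a $-\infty$ entry at $p_M$;
\item in the infinite-support case, it shows up as infinitely many negative entries on $P_M$.
\end{itemize}
Otherwise $\lgcd(H_I)=\s{b}$ is of finite type. By \Cref{cor: fg iff} and \Cref{rem: subgroup ranks}, $\rk(H_I)=\infty$ exactly when $I\subseteq M$ for some $M\in\mathcal M_\infty(\chi)$, and $\rk(H_I)=1$ otherwise. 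For $I$ meeting $Z$ we have $H_I=\{0\}$. Hence the $H_i$ realize $\chi$.

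The main obstacle is the bookkeeping in this last step: showing that $\chi(I)=\infty$ is equivalent to $I\subseteq M$ for some $\infty$-maximal $M$ (which uses decreasingness together with the description of the $0$-set), and choosing the reserved primes disjointly so that no spurious infinite-type coordinate appears.
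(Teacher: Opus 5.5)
Your proof is correct and follows the paper's argument. Your sequences $\s{b_i}$ are exactly the paper's $\s{h_i}$ in both cases: in the infinite-support case with $P_M=N_J$, and in the finite-support case with $P_M=\{p_J\}$. Your injectivity argument for (iv) is the paper's disjointness/pigeonhole argument. The only difference is that you treat all cases uniformly, which also absorbs the paper's Subcase 1.1 ($\mathcal M_\infty(\chi)=\varnothing$, where all your $H_i$ equal $H_{\s{b}}$). For that, phrase the infinite-support choice as ``pairwise disjoint infinite subsets of $N$'' rather than a partition, since an infinite $N$ cannot be partitioned into zero pieces.
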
  

\begin{proof}

The necessity of conditions is clear: \ref{item: im rho} follows from \Cref{rem: subgroup ranks}. \ref{item: rho decreasing} follows from~\Cref{lem: decreasing}. \ref{item: 0 minimal} follows from~\Cref{rem: nontrivial}. Finally, to see~\ref{item: max bound}, assume that $G$ has finite support. Then, for any $H,K\leqslant G$, \Cref{lem: finite support} tells us that $H\cap K$ has finite support, and \Cref{rem: fg iff no infty} together with~\Cref{lem: 2 props}.\ref{item: infty cap} that $H\cap K$ is finitely generated if and only if
 $$
\supp_{-\infty}(H) \cap \supp_{-\infty}(K) = \supp_{- \infty}(H \cap K) = \varnothing \,.
 $$
Now suppose that $H_1,\ldots,H_k \leqslant G$ realize $\chi$, and take $I,J\in \mathcal{M}_{\infty}(\chi)$, $I\neq J$. This means that $H_I$ and $H_J$ are not finitely generated, but $H_I \cap H_J =H_{I\cup J}$ is finitely generated. That is, $\varnothing \neq \supp_{-\infty}(H_I)\subseteq \supp_{-\infty}(G)$ and $\varnothing \neq \supp_{-\infty}(H_J) \subseteq \supp_{-\infty}(G)$ but ${\supp_{-\infty}(H_I) \cap \supp_{-\infty}(H_J)=\varnothing}$. In other words, $\set{\supp_{-\infty}(H_I) \st I\in \mathcal{M}_{\infty}(\chi)}$ is a family of $|\mathcal{M}_{\infty}(\chi)|$ nonempty and pairwise disjoint subsets of $\supp_{-\infty}(G)$. By the pigeonhole principle, if $|\mathcal{M}_{\infty}(\chi)|>|\supp_{-\infty}(G)|$ there must be at least two subsets in $\mathcal{M}_{\infty}(\chi)$ which are not disjoint, contradicting the realizability of $\chi$. This completes the proof of the implication to the right.

\medskip

For the implication to the left, let $\s{g}=\lgcd(G)$, and let us distinguish two cases. 

\medskip

\textbf{Case 1: $G$ has infinite support}. Since $\supp_{>0}(G)$ is always finite, the negative support $N=\supp_{<0}(G)$ must be infinite as well. Let $\chi\colon\Pk\to \set{0,\trivial, \infty}$ be a rank configuration satisfying \ref{item: im rho}, \ref{item: rho decreasing}, and \ref{item: 0 minimal} (\ref{item: max bound} is vacuous in this case), and  let $\mathcal{M} =\mathcal M_\infty(\chi)$ be the set of $\infty$-maximal subsets for $\chi$. We are going to realize $\chi$ by distinguishing two subcases. 

\textbf{Subcase 1.1:} $\mathcal{M} =\varnothing$, that is, $\im(\chi) \subseteq \set{0,1}$, then the family of subgroups defined as
 \[
H_i =\begin{cases}
\set{0} & \text{if } \chi(\set{i}) = 0,\\
\gen{g_i} & \text{if } \chi(\set{i}) = 1,
\end{cases}
 \]
where $0\neq g_i\in G$ is arbitrary (for example, all of them equal), realizes $\chi$. In fact, for each $I\in \Pk$, if $\chi(I)=0 $ then, by~\ref{item: 0 minimal}, $\chi(\{i\})=0$ for some $i\in I$; so, $H_I =\set{0}$ and hence $\rk(H_I)=0=\chi(I)$. Otherwise, $\chi(I)=1 $ and, since $\im(\chi) \subseteq \set{0,1}$ and using~\ref{item: rho decreasing}, $\chi(\{i\})=1$ for every $i\in I$; so, $H_I$ is and intersection of finitely many nontrivial cyclic subgroups of $G$ and hence, nontrivial cyclic itself (see~\Cref{rem: nontrivial}); therefore, $\rk(H_I)=1=\chi(I)$. 

\textbf{Subcase 1.2:} $\mathcal{M}\neq\varnothing$, that is, $\infty\in \im(\chi)$. Since $\mathcal{M}$ is finite, while $N$ is infinite, we can choose a partition $N=\bigsqcup_{J\in \mathcal{M}} N_J$, with each $N_J$ being infinite, $J\in \mathcal{M}$. For each $i\in[k]$ with $\chi(\{i\})\neq 0$, define a sequence $\s{h_i}\in\Seq_{\leq0}(\Zb)$ by
 \[
\s{h_i}(p)= \begin{cases} \s{g}(p), & \text{if } p\in \supp_{>0}(\s{g}), \\ \s{g}(p), & \text{if } p\in \bigcup \set{N_J \st J\in \mathcal{M},\, i\in J}\subseteq N=\supp_{<0}(G), \\ 
 0, & \text{otherwise.} \end{cases}
 \]
And consider the subgroups $H_i =\{0\}$ if $\chi(\{i\})=0$, and $H_i =H_{\s{h_i}}\neq \{0\}$ otherwise. 
We claim that the family of subgroups $\set{H_1,\ldots,H_k}$ realizes $\chi$ in $G$. Indeed, by construction, $\s{h_i}\succeq \s{g}$ for every $i$ with $\chi(\{i\})\neq 0$, and hence $H_i\leqslant G$ for all $i$.

We first claim that, for every $I\in \Pk$,
 \begin{equation}\label{eq: nfg iff incl}
\rk(H_I)=\infty \ \Longleftrightarrow \chi(I) =\infty. 
 \end{equation}
Indeed, if $\chi(I)=0$ the claim follows from \ref{item: 0 minimal}. Otherwise, we have $\lgcd(H_I)=\max_{i\in I}\s{h_i}$.
Now, if $\chi(I)=\infty$ then $I\subseteq J$ for some $J\in\mathcal M$ and hence, for every $p\in N_J$ and every $i\in I$, 
$\s{h_i}(p)=\s{g}(p)<0$.
Thus, 
 $$
\lgcd_p(H_I)=\max_{i\in I}\set{\s{h_i}(p)}=\s{g}(p)<0 \qquad(\forall p\in N_J).
 $$
Since $N_J$ is infinite, $H_I$ has infinite support, and therefore $\rk(H_I) =\infty$.

Conversely, suppose that $\chi(I) \neq\infty$. Then, by \ref{item: rho decreasing}, $I$ is not contained in any $J\in\mathcal{M}$, i.e., for every $J\in\mathcal{M}$, there exists $i_J\in I\setminus J$. Hence, for every $p\in N_J$, $\s h_{i_J}(p)=0$. Since all values $\s{h_i}(p)$, $p\in N$, $i\in I$, are either $\s{g}(p)<0$ or $0$, we get
 \[
\lgcd_p(H_I)=\max_{i\in I}\set{\s{h_i}(p)}=0, \qquad(\forall p\in N = \textstyle{\bigsqcup_{J \in \mathcal{M}} N_J}).
 \]
Hence, $\supp(\lgcd(H_I))\subseteq \supp_{>0}(\s h)$, which is finite, and $H_I$ is finitely generated. This completes the proof of the claim.

To finish proving that the family of subgroups $H_1,\ldots,H_k$ realizes $\chi$ in $G$  it remains to observe that $\rk (H_I)=0$ if and only if $H_i=\{0\}$ for some $i\in I$, which happens if and only if $\chi(\{i\})=0$ (and so, $\rk (H_I)=1$ if and only if $\chi(I)=1$). 

\medskip

\textbf{Case 2: $G$ has finite support}. Let $\chi\colon\Pk\to \set{0,\trivial, \infty}$ be a rank configuration satisfying~\ref{item: im rho}, \ref{item: rho decreasing}, \ref{item: 0 minimal} and~\ref{item: max bound} (now crucially meaning $|\mathcal{M}|\leq |\supp_{-\infty}(G)|$), and let us realize $\chi$. Consider an injective map $\mathcal{M}\to \supp_{-\infty}(G)$, $J \mapsto p_J$ (which exists, since $|\mathcal{M}| \leq |\supp_{-\infty}(G)|$); that is, associate to every $\infty$-maximal subset $J$ for $\chi$, a different prime $p_J \in \supp_{-\infty}(G)$. For each $i\in [k]$ such that $\chi(\set{i})=0$, define $H_i =\set{0}$; otherwise (that is, if $\chi(\set{i}) \neq 0$), define $H_{i}=H_{\s{h_i}}\neq \set{0}$ where 
 \[
\s{h_i}(p)=\begin{cases} -\infty, & \text{if } p\in \set{p_J \st J\in\mathcal{M},\, i\in J}, \\[1mm] \max\set{0,\s{g}(p)}, & \text{otherwise.} \end{cases}
 \]

We claim that the family of subgroups $\set{H_1,\ldots,H_k}$ realizes $\chi$ in $G$. By construction, $H_i\leqslant G$ (since $\s{h_i}\succeq\s{g}$ for every $i\in[k]$ with $\chi(\set{i}) \neq 0$). Moreover, for every nonempty $I\subseteq[k]$, we have
\begin{equation} \label{eq: supp infty}
\supp_{-\infty}(H_I)
\,=\,
\bigcap_{i\in I}\supp_{-\infty}(H_i)
\,=\,
\set{p_J\st J\in\mathcal{M},\ I\subseteq J},
\end{equation} 
with the abuse of language that $\supp_{-\infty} (\set{0}) = \varnothing$.
Since $\supp(G)$ is finite, \Cref{eq: supp infty} holds, and 
$\chi$ is decreasing, it follows (again) that
\begin{align*}
\rk (H_I )=\infty 
&\ \Leftrightarrow\ 
\supp_{-\infty}(H_I)\neq\varnothing\\
&\ \Leftrightarrow\ 
I\subseteq J
\text{ for some }J\in\mathcal{M} \\
&\ \Leftrightarrow\ \chi(I)=\infty.
\end{align*}
Finally, the argument distinguishing between the finite cases $\set{0,1}$ is exactly the same as in the previous case.
\end{proof}

After identifying all the finite images into $\mathsf{F}$, conditions \ref{item: im rho} and $\ref{item: 0 minimal}$ are vacuous, and the corresponding binary configuration is obviously also decreasing. A characterization of binary intersection configurations follows immediately.

\begin{cor}
Let $G$ be a nontrivial subgroup of $(\QQ,+)$ and let $\chi \colon \Pk \to \set{\mathsf{F},\infty}$ be a binary intersection configuration. Then, $\chi$ is realizable in $G$ if and only if
 \begin{enumerate}[(i)]
\item $\chi$ is decreasing,
\item if $G$ has finite support then $|\mathcal{M}_{\infty}(\chi)|\leq |\supp_{-\infty}(G)|$. 
\end{enumerate} 
\end{cor}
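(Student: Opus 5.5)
The plan is to deduce the corollary directly from \Cref{thm: main}, by lifting binary configurations to rank configurations and projecting back. Given a binary configuration $\chi\colon\Pk\to\set{\mathsf{F},\infty}$, I will associate to it the rank configuration $\hat\chi\colon\Pk\to\set{1,\infty}$ defined by $\hat\chi(I)=1$ if $\chi(I)=\mathsf{F}$ and $\hat\chi(I)=\infty$ if $\chi(I)=\infty$. Collapsing finite values sends $\hat\chi$ back to $\chi$. Moreover, since $\mathcal{M}_\infty$ depends only on the preimage of $\infty$, we have $\mathcal{M}_\infty(\hat\chi)=\mathcal{M}_\infty(\chi)$.

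For necessity, suppose $H_1,\ldots,H_k\leqslant G$ realize $\chi$. Then $I\mapsto\rk(H_I)$ is a rank configuration that collapses to $\chi$, and it is realized in $G$ by the same subgroups. By \Cref{thm: main} this rank configuration is decreasing, and the collapse map $\bN\to\set{\mathsf{F},\infty}$ is order preserving. Hence $\chi$ is decreasing, which is also immediate from \Cref{lem: decreasing}. In addition, $\mathcal{M}_\infty$ is unchanged under collapsing, so condition (iv) of \Cref{thm: main} gives (ii) when $G$ has finite support.

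For sufficiency, assume (i) and (ii). I will check that $\hat\chi$ satisfies the four hypotheses of \Cref{thm: main}.
\begin{enumerate}[(a)]
\item Its image lies in $\set{1,\infty}\subseteq\set{0,1,\infty}$.
\item It is decreasing, because $\chi$ is and the lift $\mathsf{F}\mapsto1$, $\infty\mapsto\infty$ is order preserving.
\item Condition (iii) of the theorem holds vacuously, since $\hat\chi$ never takes the value $0$.
\item Condition (iv) is exactly hypothesis (ii), because $\mathcal{M}_\infty(\hat\chi)=\mathcal{M}_\infty(\chi)$.
\end{enumerate}
Therefore \Cref{thm: main} provides subgroups $H_1,\ldots,H_k\leqslant G$ with $\rk(H_I)=\hat\chi(I)$ for every $I\in\Pk$. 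These subgroups realize $\chi$ as a binary configuration.

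There is no real obstacle in this argument. The only point needing care is that both the lifting and the collapsing preserve monotonicity and leave the set of $\infty$-maximal subsets unchanged, and both facts are immediate from the definitions.
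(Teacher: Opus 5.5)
Your proposal is correct and is essentially the paper's proof. The paper also lifts $\mathsf{F}\mapsto 1$ to obtain a rank configuration satisfying the hypotheses of \Cref{thm: main}, and gets necessity from that theorem. You additionally spell out the necessity direction (apply \Cref{thm: main} to $I\mapsto\rk(H_I)$ and collapse) and the invariance of $\mathcal{M}_\infty$, which the paper leaves implicit.
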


\begin{proof}
The implication to the right follows from~\Cref{thm: main}. For the converse, let~$\chi$ be a binary intersection configuration satisfying~(i) and~(ii); replacing $\mathsf{F}$ to $1$, we obtain a rank intersection configuration satisfying all the hypotheses in~\Cref{thm: main} and hence, realizable in $G$.
\end{proof}




\section*{Acknowledgments}

The authors acknowledge support from the Spanish \emph{Agencia Estatal de Investigación} through grant PID2021-126851NB-I00 (AEI/FEDER, UE).

\renewcommand*{\bibfont}{\small}
\printbibliography

\Addresses

\end{document}

\newpage

\section{Baumslag--Solitar groups $\BS(1,n)$}

We recall that a \defin{Baumslag--Solitar} group (a BS group) is a group of the form
 \begin{equation} \label{eq: pres BS(m,n)}
\BS(m,n)=\pres{a,t}{t^{-1} a^m t=a^n},
 \end{equation}
where $m,n\in \ZZ \setmin \set{0}$. Such groups sit in the middle of the well known splitting short exact sequence
 \begin{equation}\label{eq: ses}
\begin{array}{rcccccccl} 1 & \longrightarrow & \ncl{a} & \longrightarrow & BS(m,n) & \stackrel{\pi}{\longrightarrow} & \ZZ & \longrightarrow & 1 \\ & & & & a & \mapsto & 1 & & \\ & & & & t & \mapsto & t & &
\end{array}
 \end{equation}
and so, they are semidirect products of the form $BS(m,n)\simeq \ncl{a} \rtimes \ZZ$.


A particularly well understood subfamily of BS groups is the one consisting of the groups of the form 
 \begin{equation} \label{eq: pres BS(1,n)}
\BS(1,n)=\pres{a,t}{t^{-1} at=a^n}.
 \end{equation}
In this case, from the defining relation $t^{-1} at=a^n$ we easily deduce the equation below.

\begin{lem}\label{lem: conj}
For every $p \in \NN$ and every $k \in \ZZ$, $t^{-p} a^k t^p = a^{kn^p}$.
\end{lem}

In particular, in $\BS(1,n)$ we have the two multiplication rules 
 \begin{equation} 
a^{k}\, t=t\, a^{k n} \quad\text{and}\quad t^{-1} a^{k}=a^{k n}\, t^{-1}.
 \end{equation}
That is, positive (\resp negative) $t$'s are allowed to jump left (\resp right) over powers of $a$'s at the price of raising these powers of $a$ to the power of $n$.
As a consequence, it is clear that (after moving positive $t$'s to the left and $t^{-1}$'s to the right, and then simplifying using directly the relation) every element in $\BS(1,n)$ admits an expression of the form $t^p a^k t^{-q}$, where $k\in \ZZ$, $p,q\in \NN$, and further satisfying $pq=0$ if $n\divides k$. Then, applying $\pi$ from \eqref{eq: ses}, it is clear that this form is unique. 

\begin{lem}
Every $w\in \BS(1,n)$ can be written as $w=t^p a^k t^{-q}$, where $k\in \ZZ$, $p,q\in \NN$, and $pq=0$ if $n\divides k$. Moreover, such expression is unique. \qed
\end{lem}

To understand the kernel, $\ker \pi=\ncl{a}$, we need to consider the following monomorphism.

\begin{lem}
The map
 \begin{equation}\label{eq: BS into GL2}
\begin{array}{rcl} \varphi\colon \BS(1,n) & \to & \GL_2(\ZZ[\frac{1}{n}])\leqslant \GL_2(\QQ) \\[3pt] a & \mapsto & \matr{A}=\left(\begin{smallmatrix} 1 & 1 \\ 0 & 1 \end{smallmatrix}\right) \\[3pt] t & \mapsto & \matr{T}=\left(\begin{smallmatrix} 1/n & 0 \\ 0 & 1 \end{smallmatrix}\right) \end{array}
 \end{equation}
defines a monomorphism of groups. 
\end{lem}

\begin{proof}
It is straightforward to check that the map \eqref{eq: BS into GL2} preserves the relation $t^{-1}at=a^n$ an hence is well defined:
 $$
\matr{T}^{-1} \matr{A} \matr{T}=\begin{pmatrix} n & 0 \\ 0 & 1 \end{pmatrix}
\begin{pmatrix} 1 & 1 \\ 0 & 1 \end{pmatrix} \begin{pmatrix} 1/n & 0 \\ 0 & 1 \end{pmatrix}=\begin{pmatrix} 1 & n \\ 0 & 1 \end{pmatrix}=\matr{A}^n.
 $$
On the other side, if $\matr{T}^p \matr{A}^k \matr{T}^{-q}=\matr{I}$, then
 $$
\begin{pmatrix} n^{q-p} & k\,n^{-p} \\ 0 & 1 \end{pmatrix}= \begin{pmatrix} n^{-p} & 0 \\ 0 & 1 \end{pmatrix} \begin{pmatrix} 1 & k \\ 0 & 1 \end{pmatrix} \begin{pmatrix} n^q & 0 \\ 0 & 1 \end{pmatrix} =\begin{pmatrix} 1 & 0 \\ 0 & 1 \end{pmatrix} 
 $$
and hence $k=0$ and $p=q$. This shows that $\varphi$ 
is injective.
\end{proof}

\begin{cor}\label{cor: kernel}
We have $\ker \pi =\ncl{a} \simeq \ZZ[\frac{1}{n}]$, an additive subgroup of $\QQ$. In particular, it is abelian and not finitely generated. 
\end{cor}

\begin{proof}
By \Cref{lem: conj}, $\ncl{a}=\gen{t^pat^{-p},\quad p\in \ZZ}=\gen{t^pat^{-p},\quad p\in \NN}$. But, taking the image by $\varphi$, we have 
 $$
\begin{array}{rcl}
\ncl{a} \simeq \ncl{a}\varphi & = & \left\langle \begin{pmatrix} 1/n^{-p} & 0 \\ 0 & 1 \end{pmatrix} \begin{pmatrix} 1 & 1 \\ 0 & 1 \end{pmatrix} \begin{pmatrix} n^p & 0 \\ 0 & 1 \end{pmatrix},\quad p\in \NN \right\rangle \\ \\ & = & \left\langle \begin{pmatrix} 1 & 1/n^p \\ 0 & 1 \end{pmatrix}, \quad p\in \NN \right\rangle \leqslant GL_2(\ZZ[\frac{1}{n}]).
\end{array}
 $$
Since this (multiplicative) subgroup of $GL_2(\ZZ[\frac{1}{n}])$ is abelian and isomorphic to $\ZZ[\frac{1}{n}]$ (because $\left(\begin{smallmatrix} 1 & a \\ 0 & 1\end{smallmatrix}\right) \left(\begin{smallmatrix} 1 & b \\ 0 & 1\end{smallmatrix}\right) = \left(\begin{smallmatrix} 1 & a+b \\ 0 & 1\end{smallmatrix}\right)$), we have that $\ncl{a}\simeq \ZZ[\frac{1}{n}]$.  
\end{proof}

We focus now in analyzing the lattice of subgroups of $\BS(1,n)$. As will be clear, they are quite restrictive: as we have seen, subgroups of $\ncl{a}\simeq \ZZ[\frac{1}{n}]$ are either trivial, or cyclic, or abelian non finitely generated. Adding finite index subgroups, we complete the list of subgroups of $\BS(1,n)$ (up to isomorphism): 

\begin{prop}\label{prop: classification of subgroups}
Let $1\neq H\leqslant \BS(1,n)$. Then one and only one of the following situations holds:
 \begin{itemize}
\item[(a)] $H\leqslant \ncl{a}$: in this case, $H$ is abelian and $r(H)=1,\infty$;
\item[(b)] $H\cap \ncl{a}=1$: in this case, $r(H)=1$;
\item[(c)] non of the above: in this case, there exists $r,l\geqslant 1$ and $s\in \ZZ$ such that $H=\langle t^ra^s,\, a^l \rangle$, $r(H)=2$, and $H$ is of index $[\BS(1,n) : H]=rl<\infty$ in $\BS(1,n)$; moreover, $H\simeq \BS(1, n^r)$.
 \end{itemize}
\end{prop}

The exhaustivity and mutual exclusivity of cases (a), (b), and (c) are obvious. Looking at the short exact sequence~\eqref{eq: ses}, the statements in cases (a) and (b) are also clear. So, the interesting part of \Cref{prop: classification of subgroups} is in case (c): to give a complete proof for this case, we need a few previous lemmas. 

Suppose $H\leqslant \BS(1,n)$ fits into case (c). Projecting, we get a nontrivial subgroup of $\ZZ$, say $H\pi=\gen{t^r}$ for some $r\geq 1$. Choose a preimage of $t^r$ in $H$, which will be of the form $t^rt^qa^kt^{-q}\in H$, for some $q\geq 0$. Moreover, $H\cap \ncl{a}\neq 1$ and we have $t^pa^mt^{-p}\in H$ for some $p\geq 0$ and $m\in \ZZ$. Note that, taking its $n$-th power,  
 $$
H\ni (t^pa^mt^{-p})^n =t^pa^{mn}t^{-p} =t^{p-1}a^mt^{-(p-1)}.
 $$
So, repeating this $p$ times, we obtain $a^m \in H$ and $H$ contains nontrivial powers of $a$. Let 
 $$
\ell =\min \{k \geq 1 \mid a^k\in H\}. 
 $$
It is clear that $a^k\in H$ if and  only if $\ell\divides k$; moreover, as seen above, $t^pa^k t^{-p}\in H$, $p\geq 0$, implies $a^k\in H$ and so, $\ell\divides k$ as well. 

Restricting the short exact sequence~\eqref{eq: ses} to $H$, we get 
\begin{equation}\label{eq: ses restricted}
\begin{array}{rcccccccl} 1 & \longrightarrow & \ncl{a} & \longrightarrow & BS(m,n) & \stackrel{\pi}{\longrightarrow} & \ZZ & \longrightarrow & 1 \\ & & \vee & & \vee & & \vee & \\ 1 & \longrightarrow & H\cap \ncl{a} & \longrightarrow & H & \stackrel{\pi_{|H}}{\longrightarrow} & \gen{t^r} & \longrightarrow & 1  
\end{array}
 \end{equation}
and $H\cap \ncl{a}\unlhd H$. However, in our special situation, we can say more.

\begin{lem}\label{lem: normal}
$H\cap \ncl{a}\unlhd \BS(1,n)$.
\end{lem}

\begin{proof}
Let $h=t^pa^kt^{-p}\in H\cap \ncl{a}$ be an arbitrary element, $p\geq 0$, $k\in \ZZ$. Since it commutes with $a$, it is clear that $a^{-1}ha=aha^{-1}=h\in H$. Also, $t^{-1}ht=t^{-1}t^pa^kt^{-p}t=t^{p}a^{kn}t^{-p}=(t^pa^kt^{-p})^n=h^n\in H$ so, it only remains to see the conjugation by $t$ in the other direction. Observe that 
 $$
\begin{array}{rcl}
H\cap \ncl{a} & \ni & (t^rt^qa^k t^{-q})\cdot h\cdot (t^r t^qa^kt^{-q})^{-1} \\ & = & t^r (t^qa^k t^{-q})\cdot h\cdot (t^qa^{-k} t^{-q}) t^{-r} \\ & = & t^r h t^{-r},
\end{array}
 $$
where the last equality holds because $t^qa^k t^{-q}, h\in \ncl{a}$ commute. Hence, $H\cap \ncl{a} \ni (t^rht^{-1})^n =t^r h^n t^{-r}=t^{r-1}ht^{-(r-1)}$ and, repeating, $tht^{-1}\in H$. This completes the proof that $H\cap \ncl{a}\unlhd \BS(1,n)$.
\end{proof}

\begin{lem}
$\gcd(\ell, n)=1$.
\end{lem}

\begin{proof}
Write $d=\gcd(\ell,n)$, $\ell=d\ell'$, $n=dn'$. Since $r\geq 1$, $d\divides n^r$ and $n^r/d\in \ZZ$. Therefore, 
 $$
\begin{array}{rcl}
H\cap \ncl{a} & \ni & (t^rt^qa^k t^{-q})\cdot (a^{\ell})^{n^r/d} \cdot (t^r t^qa^kt^{-q})^{-1} \\ & = & t^r \cdot (t^qa^k t^{-q})\cdot (a^{\ell' n^r})\cdot (t^qa^{-k} t^{-q})\cdot t^{-r} \\ & = & t^r (a^{\ell'})^{n^r} t^{-r} \\ & = & a^{\ell'},  
\end{array}
 $$
which implies $\ell \divides \ell'$ and $d=1$. 
\end{proof}

\begin{lem}
$H$ contains a preimage of $t^r$ of the form $t^ra^s$ (i.e., we can assume $q=0$ in the expression $t^rt^qa^kt^{-q}\in H$ above).
\end{lem}

\begin{proof}
We know that $t^rt^qa^kt^{-q}\in H$ for some $q\geq 0$ and $k\in \ZZ$. Since $\gcd(\ell,n^q)=\gcd(\ell, n)=1$, there exists $d\in \ZZ$ such that $\ell d \equiv k \pmod{n^q}$. Write $s=\frac{k-\ell d}{n^q}\in \ZZ$, by~\Cref{lem: normal} $t^q a^{\ell d}t^{-q}\in H$, and we have
 $$
 \begin{array}{rcl}
H\cap \ncl{a} & \ni & (t^rt^qa^k t^{-q})\cdot (t^q a^{\ell d}t^{-q})^{-1} \\ & = & t^r t^qa^{k-\ell d} t^{-q} \\ & = & t^r t^q a^{sn^q} t^{-q} \\ & = & t^r a^{s}. \qedhere  
 \end{array}
 $$
\end{proof}

\begin{lem}
$(t^ra^s)^N=t^{rN}a^{s\frac{n^{rN}-1}{n^r-1}}$, for all $N\geq 0$, $r\geq 1$, and $s\in \ZZ$.
\end{lem}

\begin{proof}
This is a straighforward calculation by induction on $N$.
\end{proof}

With all this, we can already proceed to the proof of~\Cref{prop: classification of subgroups}:

\begin{proof}[Proof of~\Cref{prop: classification of subgroups}] As mentioned above, we can restrict ourselves to case~(c). We have already seen that $H$ always contains an element of the form $t^ra^s$; let us see that, together with $a^{\ell}$, they generate $H$. By~\eqref{eq: ses restricted}, it is clear that $H$ can be generated by any particular preimage of $t^r$, say $t^ra^s$, and all of $H\cap \ncl{a}$; and we are going to prove that any $t^qa^kt^{-q}\in H\cap \ncl{a}$, $q\geq 0$, $k\in \ell\cdot \ZZ$, belongs to $\gen{t^ra^s,\, a^{\ell}}$. In fact, choose $N\in \ZZ$ so that $rN\geq q$ and consider
  $$
\begin{array}{rcl}
\gen{t^ra^s,\, a^{\ell}} & \ni & (t^r a^s)^N\cdot a^{\ell} \cdot (t^r a^s)^{-N} \\ & = & t^{rN} a^{s\frac{n^{rN}-1}{n^r-1}}\cdot a^{\ell}\cdot a^{-{s\frac{n^{rN}-1}{n^r-1}}}\cdot t^{-rN} \\ & = & t^{rN} a^{\ell} t^{-rN}. 
\end{array}
 $$
Taking the $n$-th power, we have that $(t^{rN}a^{\ell}t^{-rN})^n=t^{rN}a^{\ell n}t^{-rN}=t^{rN-1}a^{\ell}t^{-(rN-1)}$ belongs to $\gen{t^ra^s,\, a^{\ell}}$ as well and, repeating, $t^qa^{\ell}t^{-q}$ does so. This shows that $H=\gen{t^ra^s,\, a^{\ell}}$ and $r(H)\leq 2$. Since $H\not\leqslant \ncl{a}$ and $H\cap \ncl{a}\neq 1$, $H$ cannot be cyclic and $r(H)=2$. 

Let us now prove that $H$ has finite index in $\BS(1,n)$; more precisely, that $[\BS(1,n) : H]=rl$. To show this, we shall see that $\{t^i a^j \mid i=0,\ldots ,r-1,\, j=0,\ldots ,\ell-1\}$ is a set of coset representatives, i.e. 
 $$
\BS(1,n)=\bigsqcup_{\begin{smallmatrix} i=0,\ldots ,r-1,\\ j=0,\ldots ,\ell-1 \end{smallmatrix}} t^i a^j H.
 $$
The union being disjoint is clear: for indices in the corresponding rang, if $t^i a^j H=t^{i'}a^{j'}H$ then $a^{-j'}t^{-i'}t^i a^j\in H$ and $r\divides i-i'$; therefore, $i-i'=0$, $i=i'$, and $a^{-j'}a^j\in H$, which implies $\ell \divides j-j'$, $j-j'=0$, and $j=j'$.

Before proceeding with the proof, note that our two generators of $H$ satisfy
 $$
a^{\ell}\cdot (t^ra^s)=t^ra^{\ell n^r}a^s=(t^ra^s)\cdot a^{\ell n^r}
 $$
 $$
(t^ra^s)^{-1}\cdot a^{\ell}=a^{-s}t^{-r}a^{\ell} = a^{-s} a^{\ell n^r} t^{-r}=a^{\ell n^r}\cdot (t^r a^s)^{-1}
 $$
so, in an arbitrary product of them, positive powers of $t^r a^s$ can always be moved to the left and negative powers to the right. Therefore, any element of $H$ is of the form 
 \begin{equation}\label{eq: genform}
(t^ra^s)^N \cdot (a^{\ell})^\lambda (t^ra^s)^{-M}= t^{rN}a^{s\frac{n^{rN}-1}{n^r-1} + \lambda \ell -s\frac{n^{rM}-1}{n^r-1}} t^{-rM},
 \end{equation}
for suitable $N,M\geq 0$ and $\lambda\in \ZZ$.

Now, given an arbitrary element $t^pa^kt^{-q}\in \BS(1,n)$, $p,q\geq 0$, $k\in \ZZ$, we have to find $i$ and $j$ such that $t^pa^kt^{-q}\in t^ia^jH$. More specifically, in terms of $p,q,k$, we shall find $i,j$ and $N,M,\lambda$, in such a way that $a^{-j}t^{-i}t^pa^kt^{-q}$ agrees with the expression above. In fact, choose $q'\geq 0$ so that $q+q'$ is multiple of $r$ and define $M\geq 0$ by $q+q'=Mr$. Then define $N\geq 0$ and $i=0,\ldots ,r-1$ as the quotient and reminder of the Euclidean division of $p+q'$ by $r$, \ie $p+q'=rN+i$ (it remains to determine $j$ and $\lambda$). Since 
 $$
a^{-j}t^{-i}t^pa^kt^{-q}=a^{-j}t^{rN}t^{-q'}a^kt^{-q}=t^{rN}a^{-jn^{rN}}a^{kn^{q'}}t^{-q'}t^{-q}=t^{rN}a^{-jn^{rN} + kn^{q'}}t^{-rM},
 $$
our element matches the two $t$ parts of expression~\eqref{eq: genform}, and it remains to see that $j$ and $\lambda$ can be chosen in such a way that $-jn^{rN} +kn^{q'}$ equals $s\frac{n^{rN}-1}{n^r-1} + \lambda \ell -s\frac{n^{rM}-1}{n^r-1}$. This is the same as saying 
 $$
\lambda \ell +jn^{rN} = kn^{q'}-s\frac{n^{rN}-1}{n^r-1} +s\frac{n^{rM}-1}{n^r-1},
 $$
where the right hand side is a fixed integer $z\in \ZZ$. Since $\gcd(\ell, n^{rN})=\gcd(\ell, n)=1$, Bezout's equality ensures that such integers $\lambda$ and $j$ always exist. Finally, we can extra assume that $j=0,\ldots ,\ell-1$ by manipulating simultaneously $\lambda$ and $j$ according to the identity
 $$
x\ell+yn^{rN}=(x\pm n^{rN})\ell +(y\mp \ell)n^{rN}.
 $$

To finish the proof, let us see that $H\simeq \BS(1, n^r)$. Consider the morphism given by 
 $$
\begin{array}{rcl} \varphi\colon \BS(1, n^r)=\pres{A, T}{T^{-1}AT=A^{n^r}} & \twoheadrightarrow & H=\gen{t^ra^s,\, a^{\ell}}. \\ A & \mapsto & a^{\ell} \\ T & \mapsto & t^r a^s \end{array}
$$
It is well define because $(t^ra^s)^{-1}\cdot a^{\ell}\cdot (t^r a^s)=a^{-s}(t^{-r}a^{\ell}t^r )a^s =a^{\ell n^r}=(a^{\ell})^{n^r}$. It is onto by construction. And to see injectivity, suppose $(T^pA^kT^{-q})\varphi=1$, with $p,q\geq 0$ and $k\in \ZZ$; we have 
 $$
1=(t^ra^s)^p \cdot a^{k\ell} \cdot (t^ra^s)^{-q}=t^{rp}a^{s\frac{n^{rp}-1}{n^r-1} +k\ell -s\frac{n^{rq}-1}{n^r-1} } t^{-rq}
 $$
which, by the normal form in the ambient group $\BS(1,n)$, implies $rp=rq$ and $s\frac{n^{rp}-1}{n^r-1} +k\ell -s\frac{n^{rq}-1}{n^r-1}=0$. In particular, $k\ell=0$, $k=0$ and $T^pA^kT^{-q}=1$ in $\BS(1,n^r)$. 
\end{proof}

CREC QUE JA ESTA PRACTICAMENT TOT EL QUE NECESSITEM DEL GRUP $\BS(1,n)$.